\documentclass[twoside,11pt]{article}

%

\usepackage{jmlr2earxiv}
\usepackage{pgfplots}
\usepackage{amssymb, amsmath}
\usepackage{scalerel}
\usepackage{listings}
\usepackage{bm}
\usepackage{tensor}

\lstdefinelanguage{Julia}%
  {morekeywords={abstract,break,case,catch,const,continue,do,else,elseif,%
      end,export,false,for,function,immutable,import,importall,if,in,%
      macro,module,otherwise,quote,return,switch,true,try,type,typealias,%
      using,while},%
   sensitive=true,%
   alsoother={$},%
   morecomment=[l]\#,%
   morecomment=[n]{\#=}{=\#},%
   morestring=[s]{"}{"},%
   morestring=[m]{'}{'},%
}[keywords,comments,strings]%

\lstset{%
    language         = Julia,
    basicstyle       = \ttfamily,
    keywordstyle     = \bfseries\color{blue},
    stringstyle      = \color{magenta},
    commentstyle     = \color{brown},
    showstringspaces = false,
}

\usepackage{dblfloatfix} 
\newcommand{\lt}{\left}
\newcommand{\rt}{\right}
\newcommand{\ip}[1]{\lt\langle #1 \rt\rangle}

\include{myhead}
\graphicspath{{./figs/}}

\usepackage{etoolbox}
\usepackage{mathtools}

\usepackage{booktabs}


\usepackage{mathtools}
\usepackage[normalem]{ulem} 



\newcommand{\remove}[1]{{}}

%


\newcommand{\cS}{{\mathcal{S}}}


\newcommand{\RR}{\mathbb{R}}



\newcommand{\dom}{{\mathrm{dom}}\,} 





\newcommand{\bc}{\begin{center}}
\newcommand{\ec}{\end{center}}

\newcommand{\bdm}{\begin{displaymath}}
\newcommand{\edm}{\end{displaymath}}

\newcommand{\beq}{\begin{equation}}
\newcommand{\eeq}{\end{equation}}

\newcommand{\bfl}{\begin{flushleft}}
\newcommand{\efl}{\end{flushleft}}

\newcommand{\bt}{\begin{tabbing}}
\newcommand{\et}{\end{tabbing}}

\newcommand{\beqn}{\begin{align}}
\newcommand{\eeqn}{\end{align}}

\newcommand{\beqs}{\begin{align*}} 
\newcommand{\eeqs}{\end{align*}}  





\newcommand{\prox}{\mathrm{prox}}

\newcommand{\R}{{\RR}}
\DeclareMathOperator*{\argmin}{argmin}
\newcommand{\proj}{\mathrm{proj}}

\newcommand{\lip}{\mathrm{lip}}
\newcommand{\sign}{\mathrm{sign}}

\usepackage{algorithm,algpseudocode}
\newcommand*\Let[2]{\State #1 $\gets$ #2}
\algrenewcommand\algorithmicrequire{\textbf{Input:}}
\algrenewcommand\algorithmicensure{\textbf{Output:}}

\newtheorem{assumption}{Assumption}

\usepackage{graphicx}

\usepackage{color}
%



\jmlrheading{1}{2017}{1--1}{11/17}{??/18}{Zheng17}{Peng Zheng, Aleksandr Aravkin}


\ShortHeadings{Nonconvex Splitting Methods}{Zheng and Aravkin.}
\firstpageno{1}

\begin{document}

\title{Relax-and-split method for nonsmooth nonconvex problems.}

\author{\name Peng Zheng \email zhengp@uw.edu \\
       \addr Department of Applied Mathematics\\
       University of Washington\\
       Seattle, WA 98195-3925, USA
       \AND
       \name Aleksandr Aravkin \email saravkin@uw.edu \\
       \addr Department of Applied Mathematics\\
       University of Washington\\
       Seattle, WA 98195-3925, USA
}

\editor{ }

\maketitle

\begin{abstract}
We develop and analyze a new `relax-and-split' (RS) approach for  
compositions of separable nonconvex nonsmooth functions with linear maps. 
RS uses a relaxation technique together with partial minimization, 
and brings classic techniques including direct factorization, matrix decompositions, and fast iterative methods
 to bear on nonsmooth nonconvex problems. 
We also extend the approach to trimmed nonconvex-composite formulations; 
the resulting Trimmed RS (TRS) can fit models while detecting outliers in the data.  

 We then test RS and TRS on a diverse set of applications: 
 (1) phase retrieval, (2) stochastic shortest path problems, (3) semi-supervised classification, 
 and (4) new clustering approaches. RS/TRS can be applied to models with very weak functional assumptions, 
 are easy to implement, competitive with existing methods, 
 and enable a new level of modeling formulations to be put forward to address emerging challenges
 in the mathematical sciences. 

 \end{abstract}

	\section{Introduction}

Extracting information from large-scale datasets is essential for modern scientific computing and data-driven discovery. 
Classic techniques such as least squares and direct decompositions (such as the singular value decomposition)
demand a prohibitively high degree of data quality, regularity, and homogeneity.
Inference in many settings requires robustness to error, enforcement of solution structure, 
and control of model complexity. These features can be effectively captured using nonsmooth and nonconvex optimization formulations. 
%

In this paper, we consider {\it nonconvex-composite} problems: 
\begin{equation}
\label{eq:class}
\min_{x}~~f(x) := h(Ax) + g(x), 
\end{equation}
where $x \in \mathbb{R}^n$ are decision variables, $A = [a_1,\ldots,a_m]^\top\in \mathbb{R}^{m\times n}$, $h:\mathbb{R}^m\rightarrow \mathbb{R}$ 
is nonsmooth, nonconvex, and separable, so $h(Ax) = \sum_{i=1}^m h_i(\ip{a_i, x})$; while $g: \mathbb{R}^n\rightarrow \mathbb{R}$ is convex.
We also consider {\it trimmed} extensions to robustify such models: 
\begin{equation}
\label{eq:trimming}
\min_{x,v}~~\textstyle \sum_{i=1}^m v_i h_i(\ip{a_i,x}) + g(x), \quad \mbox{s.t.}~~  v \in \triangle_\tau, 
\end{equation}
where $\triangle_\tau:=\{v:v\in[0,1]^m,~\sum_{i=1}^mv_i=\tau\}$ is the so called capped simplex.
The auxiliary variables $v$ detect $m-\tau$ outliers amongst the $m$ observations as the optimization proceeds. 

\subsection{Examples}
We present motivating examples for~\eqref{eq:class} before reviewing the literature and explaining  
the contributions.  Each example is explained fully in Section~\ref{sec:numerics}.
Examples 1-3 are {\bf not weakly convex}, that is, they cannot be convexified by adding a quadratic. 
Weak convexity is a key property for the convergence theory of 
competitive methods covered in Section~\ref{sec:related}; RS does not require weak convexity. 
All of these examples can be robustified against outliers using {\it trimming}~\eqref{eq:trimming}; 
trimming formulations are discussed at the end of Section~\ref{sec:related} and the TRS approach
for~\eqref{eq:trimming} is developed in Section~\ref{sec:trim_anal}.

\begin{example}[Sharp phase retrieval]\label{ex:phase_retrieval}
Given a complex matrix $A\in \mathbb{C}^{m\times n}$, the phase retrieval problem attempts 
to recover the full complex signal $x$ using only moduli $b$: 
\begin{equation}
\label{eq:robust_ph}
\min_{x\in \mathbb{C}^n}~~\sum_{i=1}^m||\langle{a_i,x}\rangle| - b_i|.
\end{equation}
\end{example}

\begin{example}[Semi-Supervised Classification]
Logistic regression is a common approach for binary classification; training requires labeled examples. 
We solve an extended approach that makes use of both labeled and unlabeled data: 
\begin{equation}
\label{eq:sssvm}
\min_{x} \frac{\lambda}{2}\|x\|^2 + \sum_{i = 1}^l  \log(1 + \exp(-b_i \ip{a_i,x} )) + \tau\sum_{i = l+1}^m \log(1 + \exp(-|\ip{a_i,x}|)),
\end{equation}
where $a_i$ are features, $b_i$ labels for the first $s$ examples, and remaining $(m-l)$ examples are not labeled. 
The idea is to separate unlabeled examples as clearly as possible, regardless of which class they fall into.

\end{example}

\begin{example}[Stochastic Shortest Path]\label{ex:ssp}
Given a weighted graph on $n$ nodes, 
we look for a policy  that minimizes 
expected cost of path to target by selecting between one of two actions at each node.   
Let $U^k \in \mathbb{R}^{n\times n}$ and $v^k \in \mathbb{R}^n$ 
be the connectivity graphs and average node costs for $k = 1,2$. Using the Bellman equation, 
the problem is formulated as 
\begin{equation}
\label{eq:ssp}
\min_{x\in\mathbb{R}^n} \sum_{i=1}^n \left| \min \left\{  \ip{u^1_i, x}  + v^1_i - x_i,  \ip{u^2_{i}, x} + v_i^2  - x_i\right\}\right|,
\end{equation}
where $u_i^k$ is the $i$-th row vector of $U^k$ and $x_i$ is the best expected cost starting from node $i$.

\end{example}

%

\begin{example}[Convex and Nonconvex Clustering]
While K-means is the most widely used clustering method, an alternative is to solve the problem  
\begin{equation}
\label{eq:clustering}
\begin{aligned}
\min_{X}~~&\frac{1}{2}\sum_{i=1}^m \|x_i - u_i\|^2 + \lambda \sum_{i=1}^{m-1}\sum_{j=i+1}^m R([DX]_{ij})\\
\mbox{s.t.}~~&[DX]_{ij} = x_i - x_j
\end{aligned}
\end{equation}
where $u_i$ is the reference data points and $X=[x_1, \ldots, x_m]$ are the decision variables, with $R$ a regularization functional that acts to `fuse' columns $X$ into cluster representatives, and $\lambda$ a regularization parameter that effectively controls the number of clusters. Classic approaches use a convex $R$, but we find a nonconvex $R$ has significant advantages. 

\end{example}

Table~\ref{table:map} maps Examples 1-4 to the templated objective~\eqref{eq:class}.
While the only theoretical requirement for $g(x)$ is convexity, in practice we assign simple smooth terms to $g$, 
so that we can implement fast subproblem solves. We can always take $g(x)=0$ if necessary, 
rewriting a problem with multiple terms into a simple composition $h(Ax)$:
\[
f_1(Bx) + f_2(x) = h\left(\begin{bmatrix} B \\ I \end{bmatrix} x\right), \quad \mbox{with} \quad  A =\begin{bmatrix} B \\ I \end{bmatrix}, \; \mbox{and} \;\;  h(z_1, z_2) = f_1(z_1) + f_2(z_2). 
\]
The choice $g(x) = 0$ is allowed by the theory and common in practice.

\begin{table}
\centering
\caption{\label{table:map}Mapping motivating applications into class~\eqref{eq:class}}
\begin{tabular}{c||c|c|c}\hline
Example & $h(z)$ & Linear map & $g(x)$ \\\hline\hline
Phase retrieval & $||z|- b|$ & $A$ & $0$ \\ \hline
SS-LR  & $ \log(1+\exp(-|z|))$ & $A$ &  $\frac{\lambda}{2}\|x\|^2$\\\hline
Stoch. path & $|\min\{z-a,z-b\}| $ & $U^1,\, U^2$ & $0$ \\\hline
Clustering & $R(z)$ & $D$  &   $\frac{1}{2}\sum_{i=1}^m \|x_i - u_i\|^2$\\ \hline
\end{tabular}
\end{table}

\subsection{RS for Nonconvex Composite Models}
The core innovation of this work is to relax \eqref{eq:class} and \eqref{eq:trimming} by introducing an auxiliary variable $w$, 
and then use  partial minimization over the original variables to develop efficient algorithms. 
%
In particular, we take the following `relaxed' version of \eqref{eq:class}:
\begin{equation}
\label{eq:relax}
\min_{w,x}~f_\nu(x,w):=h(w)+\frac{1}{2\nu}\|Ax - w\|^2+g(x),
\end{equation}
where $w$ approximates $Ax$, decoupling the linear map from the nonsmooth, nonconvex $h$.
The structure of \eqref{eq:relax} allows a partial minimization scheme.
Define
\begin{equation}
\label{eq:partial_g}
g_\nu(w):= \min_{x}~\frac{1}{2\nu}\|Ax - w\|^2 + g(x).
\end{equation}
Problem~\eqref{eq:relax} is now equivalent to
\begin{equation}
\label{eq:obj_w}
\min_{w}~p_\nu(w):= h(w) + g_\nu(w).
\end{equation}
Several observations can be made.
\begin{itemize}
\item Since $g$ is convex, \eqref{eq:partial_g} can be solved efficiently, especially when $g$ is also smooth.
\item Conditioning of \eqref{eq:obj_w} is independent of $A$ (see Table~\ref{tbl:converge_rate}).
\item The prox operator of $h$ is easy to apply whenever $h$ is separable. 
\end{itemize}
These points affect the theoretical convergence and practical implementation of RS, and are made 
precisely in the analysis detailed in Section~\ref{sec:analysis}.

\paragraph{Contributions} 
Our contributions are as follows. 
\begin{itemize}
\item We develop relaxed models for~\eqref{eq:class} and~\eqref{eq:trimming}, which  
are simple to optimize and very effective across a diverse set of applications (measured by application-specific metrics).  

\item We derive provably convergent algorithms for these relaxations, obtaining 
rates under different conditions on $g$ and $h$. In contrast to recent work for  nonsmooth 
nonconvex optimization, we do not assume 
that $h$ is {\it weakly convex}. The new methods thus apply to a broader range of problems than 
prior art, and can handle e.g. exact phase retrieval and semi-supervised learning. 

\item We apply the approach to get promising application-specific results:
\begin{itemize}
\item Exact phase retrieval, along with  
a trimmed robust extension; 
\item Semi-supervised classification;
\item New direct approach for the stochastic shortest path problem;
\item A new scalable approach for convex and nonconvex clustering.
\end{itemize}


\end{itemize}

\subsection{Related Work} 
\label{sec:related}

Well-known approaches for nonsmooth, nonconvex problems include nonsmooth BFGS~\citep{lewis2009nonsmooth}, Gradient Sampling~\citep{burke2005robust},  and derivative free methods (DFO), see e.g.~\citep{conn2009introduction}. These methods can be applied to problems more general than those in class~\eqref{eq:class} and~\eqref{eq:trimming}; 
but they assume nothing about problem structure, and so there is little chance of scaling them to the semi-supervised SVMs and phase retrieval problems in 
our numerical examples, which have millions of variables.  The lack of structure also limits the available convergence analysis: 
theoretical grounding for nonsmooth BFGS appears elusive; GS finds Clarke stationary points with unknown speed,  while rates for DFO are known and must scale linearly with dimension.

More closely related to this paper is {\it convex-composite} optimization, which captures problems in classic nonlinear programming  
and more recently in large-scale machine learning. The convex-composite class, see e.g.~\cite{burke1985descent,Burke1995}) generalizes 
both smooth and convex functions and is given by 
\begin{align}\label{eq:main_prob}
\min_{x \in \RR^n} \sum_{i=1}^m h_i(c_i(x))+g(x),
\end{align}
where $g$ is a closed convex function, $h_i$ are convex and Lipschitz, and $c_i$ are smooth maps. 
The functions $g$ and $h_i$ provide an inference structure, while the maps $c_i$ encode the data generating mechanism. 
Examples include exact penalty formulations of nonlinear programs \cite[Section 17.2]{NW}, robust phase retrieval (squared variant)~\citep{duchi_ruan_PR}, and matrix factorization ~\citep{gill_siam_news_views}.
Convex-composite problems have been extensively studied over the years
\citep{composite_cart,powell_paper,burke_com,yu_super,steph_conv_comp,fletcher_back,pow_glob},
and have seen significant recent interest~\citep{prox,prox_error,composite_cart,nest_GN,comp_DP,duchi_ruan}.

The problem classes~\eqref{eq:class} and~\eqref{eq:trimming} fall outside of the convex-composite class any time $h$ is both nonsmooth and nonconvex 
\footnote{When $h$ is smooth, $h(Ax)$ is smooth also and hence trivially convex-composite. }.
On the other hand, the nonconvex-composite class assumes that the data generating mechanism $Ax$ is linear.  
An analysis of the natural super-class that allows nonsmooth nonconvex $h$ and nonlinear maps $c$ is left to future work.

Smoothing techniques are closely related to our approach; 
Moreau-Yosida smoothing (see Section~\ref{sec:notation}) and related method of~\cite{nesterov2005smooth}
are at the core of many well-known algorithms, including those of \cite{becker2011nesta}, \cite{yang2011alternating}, and \cite{xu2015smoothing}.  
If we partially minimize~\eqref{eq:relax} with respect to $w$ rather than with respect to $x$, we arrive at the problem 
\begin{equation}
\label{eq:other}
\min_x h_\nu(Ax) + g(x),
\end{equation}
with $h_\nu$ analogous to the smoother discussed in~\cite{nesterov2005smooth}. However, since $h$ is  nonconvex, 
the function $h_\nu$ may also be nonsmooth and nonconvex (see the right panel of Figure~\ref{fig:smooth}), 
and~\eqref{eq:other} may be just as difficult to solve as the original problem. 
Minimizing over $x$ instead leads to analyzable algorithms in the nonconvex-composite setting.  
 \begin{figure}[h]
\centering
\includegraphics[width=0.45\textwidth]{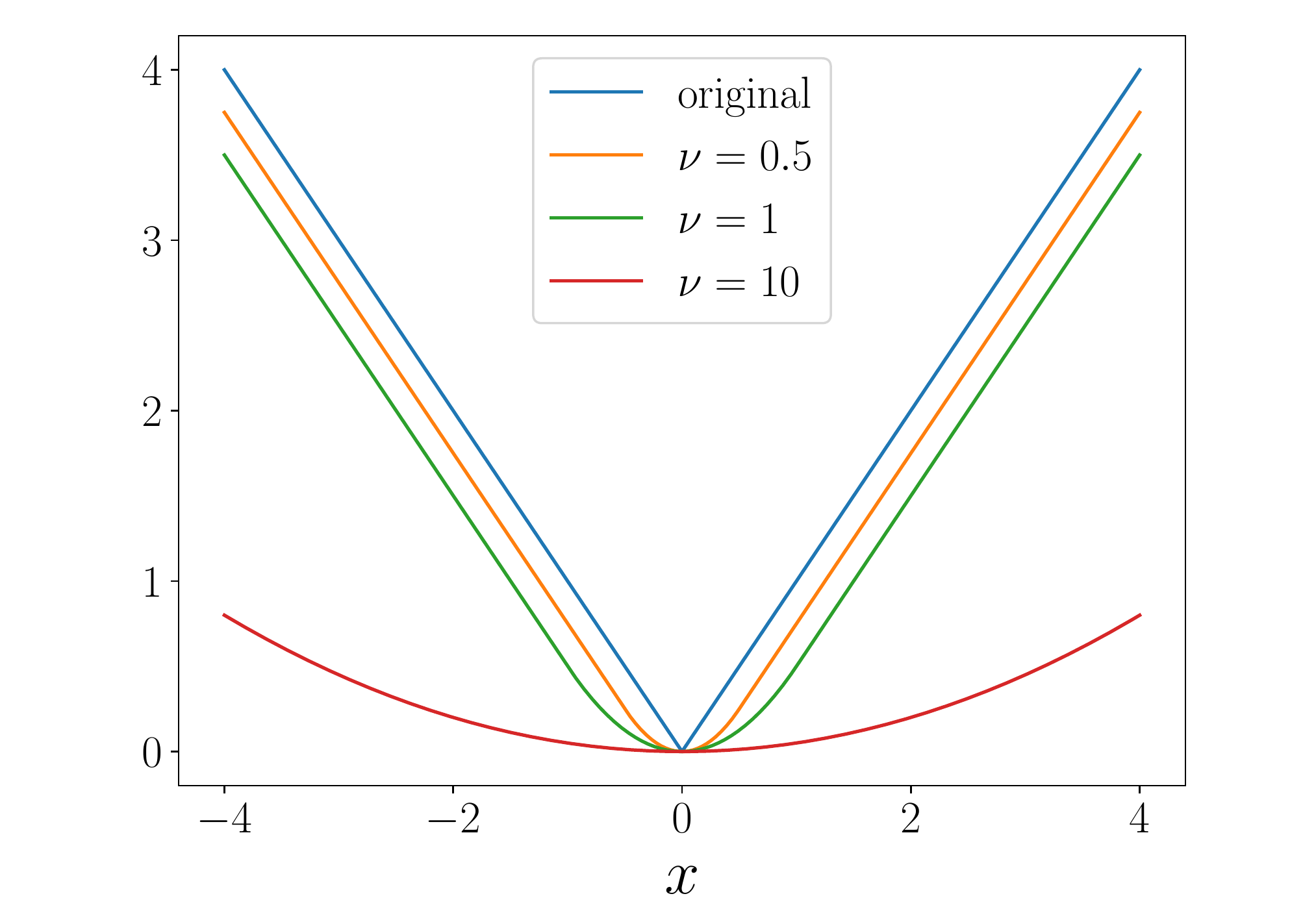}
\includegraphics[width=0.45\textwidth]{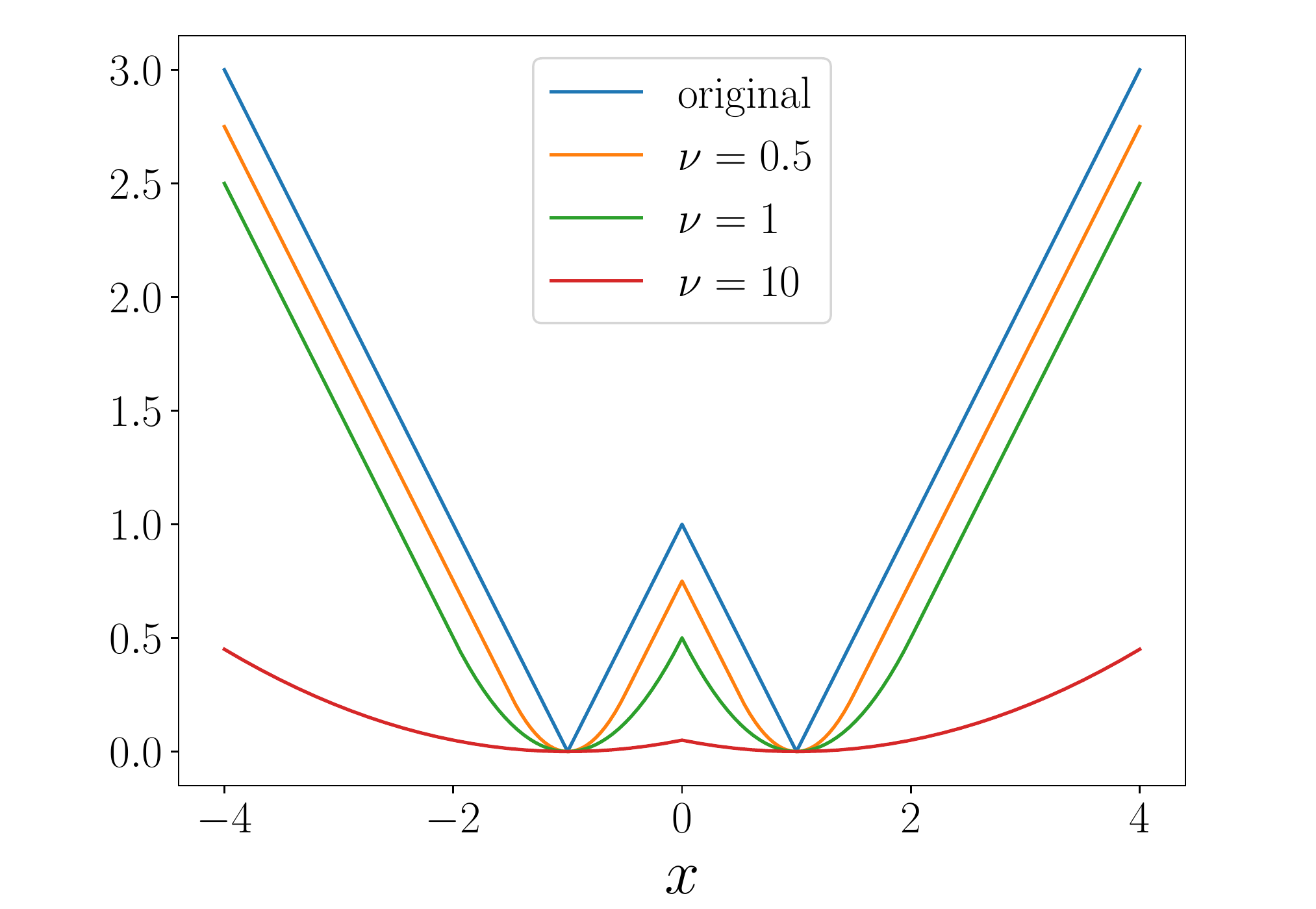}
\caption{Moreau-Yosida smoothing for convex and nonconvex functions. The left figure plots smoothers for the convex function $h(x) = |x|$, 
while the right figure plots smoothers for the function $h(x) = ||x|-1|$, which is not even weakly convex.}
\label{fig:smooth}
\end{figure}


Another line of recent work combines stochastic gradient techniques with nonsmooth optimization~\citep{aravkin2016smart,davis2018stochastic}. 
These approaches typically require stronger assumptions, such as smoothness or weak convexity of $h$.  
A function $h$ is $\rho$-weakly convex when $h(\cdot) + \frac{\rho}{2}\|\cdot\|^2$ is convex. No function with `inward kinks' can be
weakly convex, which eliminates every one of our motivating examples. 

Finally, we discuss the prior literature on trimmed estimation.  
Trimmed M-estimators were initially introduced by~\cite{rousseeuw1985multivariate} in the context of least-squares regression. 
Recent work developed statistical theory~\citep{alfons2013sparse,yang2015robust,yang2016high} for robust high-dimensional applications, including  
lasso, graphical lasso, and sparse logistic regression. 
The Proximal Alternating Linearized Minimization (PALM) method of ~\cite{bolte2014proximal} can be used to find trimmed estimators~\eqref{eq:trimming}
so long as the $h$ functions are smooth and have Lipschitz continuous gradients. Better rates under the same assumptions are achieved by the algorithm of~\cite{aravkin2016smart},
who study the general formulation 
\begin{equation}
\label{eq:trimming_old}
\min_{x,v}~~\sum_{i=1}^m v_i  h_i(x) + g(x), \quad \mbox{s.t.}~~v \in \triangle_\tau,
\end{equation}
where $\tau < m $ is the estimated number of inliers, and the model $h: \mathbb{R}^m \rightarrow \mathbb{R}$ is smooth, while $g(x)$ is prox-bounded.  
Variables $v$ separate inliers from outliers by finding elements $h_i(x)$ that disagree with the consensus, even as the consensus evolves due to 
updates of $x$. The set $\triangle_\tau$, called the {\it capped simplex}, as the intersection of the $\tau$-simplex with the unit box, see~\eqref{eq:trimming}. 
We extend the RS method to the nonconvex-composite class~\eqref{eq:trimming}, so that 
we can trim nonsmooth nonconvex terms. 
This extension, called trimmed RS (TRS), allows for outlier detection and removal 
for any of the motivating examples, and we illustrate the power of the approach on the phase retrieval application in Section~\ref{sec:phase}.

\subsection{Road map} The paper proceeds as follows. 
RS is developed and analyzed in Section~\ref{sec:analysis}.
The trimming extension and TRS are presented in Section~\ref{sec:trim_anal}.
Practical considerations, including implementation, approximation and refinement, and discussed in Section~\ref{sec:practical}, 
along with a comparison to the frequently used Alternating Directions Method of Multipliers (ADMM) problem in the convex setting. 
Detailed descriptions and results for the motivating applications  are presented in Section~\ref{sec:numerics}. 
Proofs and technical details are collected in Appendix~\ref{sec:appendix}.

	\section{Notation and Preliminaries}
\label{sec:notation}
In this section, we recall some basic notation that we will use throughout the manuscript. We will follow closely the monographs of \cite{mord1} \cite{RW98}.
\paragraph{Euclidean Space.}
Throughout, we consider a Euclidean space, denoted by $\RR^n$, with an inner product $\langle\cdot,\cdot \rangle$ and the induced norm $\|\cdot\|$. Given a linear map $A\colon\R^n\to\R^m$, the adjoint $A^\top\colon\RR^m\to\RR^n$ is the unique linear map satisfying
$$\langle Ax,y\rangle= \langle x,A^\top y\rangle \qquad \textrm{for all } x\in \R^n, y\in \R^m.$$
The operator norm of $A$, defined as $\displaystyle \|A\|:=\max_{\|u\|\leq 1} \|Au\|$, coincides with the maximal singular value of $A$ and satisfies  $\|A\|=\|A^\top\|$.

\paragraph{Functions and Geometry.}
The extended-real-line is the set $\overline{\R}:=\R\cup\{\pm\infty\}$. The {\em domain} and the {\em epigraph} of any function $f\colon\R^d\to\overline \R$ are the sets
\begin{align*}
	\textrm{dom}\, f&:=\{x\in \R^d: f(x)<+\infty\}, \qquad \textrm{epi}\, f:=\{(x,r)\in \R^d\times \R: f(x)\leq r\}.
\end{align*} 
We say that $f$ is {\em closed} if its epigraph, $\textrm{epi}\, f$, is a closed set. We 
assume that all functions that we encounter are {\em proper}, 
meaning they have nonempty domains and never take on the value $-\infty$.
All the functions we consider in this paper are closed and proper.

\paragraph{Lipschitz Continuity.}
For any map $F:\RR^n \rightarrow \RR^m$, we set,
\[
\lip(F):=\sup_{x \ne y}\frac{\|F(y) - F(x)\|}{\|y - x\|}.
\]
In particular, we say that $F$ is $L$-Lipschitz continuous, for some $L \ge 0$, if the inequality $\lip(F) \le L$ holds.

\paragraph{Fr\'{e}chet and Limiting Subdifferentials.}
Consider an arbitrary function $f\colon\R^n\to\overline  \R$ and a point $\bar x$ with $f(\bar x)$ finite. The  {\em Fr\'{e}chet subdifferential} of $f$ at $\bar x$, denoted $\hat \partial f(\bar x)$, is the set of all vectors $v$ satisfying 
\[f(x)\geq f(\bar x)+\langle v,x-\bar x\rangle+o(\|x-\bar x\|)\quad \textrm{ as } \quad x\to \bar x.\]
Thus the inclusion $v\in\hat\partial f(\bar x)$ holds precisely when the affine function $x\mapsto f(\bar x)+\langle v,x-\bar x\rangle$ underestimates $f$ up to first-order near $\bar x$.

In general, the limit of Fr\'{e}chet subgradients $v_i\in \hat\partial f(x_i)$, along a sequence $x_i\to\bar x$, may not be a Fr\'{e}chet subgradient at the limiting point $\bar x$. 
We define the {\em limiting subdifferential} of $f$ at $\bar x$, denoted $\partial f(\bar x)$, to comprise all vectors $v$ for which there exist sequences $x_i$ and $v_i$, with $v_i\in \partial f(x_i)$ and $(x_i,f(x_i),v_i)\to (\bar x, f(\bar x),v)$.

\paragraph{Moreau Envelope and Proximal Mapping.}
For any function $f$ and real $\nu>0$, the {\em Moreau envelope} and the 
{\em proximal mapping}  are defined by 
\begin{align}
	f_\nu(x)&:=\inf_{z}\, \left\{ f(z)+\frac{1}{2\nu}\|z-x\|^2\right\}, \label{eq:env}\\
	\prox_{{\nu}f}(x) &:=\argmin_{z}\, \left\{ f(z)+\frac{1}{2{\nu}}\|z-x\|^2\right\}.\label{eq:prox}
\end{align}
%

\section{Convergence Analysis for RS}
\label{sec:analysis}
In this section, we develop and analyze a simple algorithm to find stationary points of the relaxed objective \eqref{eq:obj_w}.
%
%

\subsection{Proximal Gradient Method for the Relaxed Objective}
Proximal gradient descent method (PGD) is a simple and powerful algorithm in the nonsmooth setting.
It requires the objective to be a sum of smooth and `prox-friendly' terms.
Problem \eqref{eq:obj_w} is naturally viewed this way, since 
\begin{itemize}
\item $g_\nu$ is smooth and its gradient is Lipschitz continuous, and
\item $h$ is prox-friendly; in particular it is separable. 
\end{itemize}
\begin{theorem}
\label{co:lip_cont}
Let $g\colon\R^n\to\R$ be a proper closed convex function that is bounded below, and $A:\R^n\to\R^m$ be a linear map. Define function $g_\nu\colon\R^m\to\R$ and solution set $x_\nu$ to be,
\begin{align*}
g_\nu(w) &= \min_x~g(x) + \frac{1}{2\nu}\|Ax - w\|^2,\\
x_\nu(w) &= \argmin_x~g(x) + \frac{1}{2\nu}\|Ax - w\|^2.
\end{align*}
For all $x_1, x_2 \in x_\nu(w)$, $x_1 - x_2 \in \mathrm{Null}(A)$.
Moreover, $g_\nu$ is convex and $C^1$-smooth, with
\[
\nabla g_\nu(w) = \tfrac{1}{\nu}(w - Ax), \quad \forall x\in x_\nu(w) \quad \text{and} \quad \lip(\nabla g_\nu) \le \tfrac{1}{\nu}.
\]
\end{theorem}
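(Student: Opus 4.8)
The plan is to treat $g_\nu$ as the Moreau envelope of the \emph{image function} $G(u):=\inf\{g(x):Ax=u\}$, since substituting $u=Ax$ gives $g_\nu(w)=\inf_u\{G(u)+\frac{1}{2\nu}\|u-w\|^2\}$; this reduces everything conceptually to the classical theory of Moreau envelopes of convex functions, but I would carry out the three quantitative claims directly. The null-space claim follows from strict convexity of $\|\cdot\|^2$. If $x_1,x_2$ both minimize $\psi_w(x):=g(x)+\frac{1}{2\nu}\|Ax-w\|^2$, then $\psi_w$ also attains its minimum at the midpoint $\tfrac12(x_1+x_2)$; since $g$ and $x\mapsto\frac{1}{2\nu}\|Ax-w\|^2$ are both convex and their sum is constant along the segment, Jensen's inequality must hold with equality in each summand separately, and strict convexity of the squared norm forces $Ax_1-w=Ax_2-w$, i.e. $x_1-x_2\in\mathrm{Null}(A)$. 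In particular $Ax$ is common to all $x\in x_\nu(w)$, so the candidate gradient $\tfrac1\nu(w-Ax)$ is well defined. Convexity of $g_\nu$ is then immediate: $(x,w)\mapsto g(x)+\frac{1}{2\nu}\|Ax-w\|^2$ is jointly convex (the convex $\frac1{2\nu}\|\cdot\|^2$ composed with the linear map $(x,w)\mapsto Ax-w$, plus the convex $g(x)$), and partial minimization of a jointly convex function is convex.

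Next I would establish $C^1$-smoothness together with the gradient formula. Fix $w$, let $x^*\in x_\nu(w)$, and set $p:=\tfrac1\nu(w-Ax^*)$. Expanding $\|Ax^*-w'\|^2$ about $w$ and using $g_\nu(w')\le\psi_{w'}(x^*)$ yields the quadratic majorant $g_\nu(w')\le g_\nu(w)+\langle p,w'-w\rangle+\frac{1}{2\nu}\|w'-w\|^2$, which touches $g_\nu$ at $w$. For a convex function finite on all of $\RR^m$, such a touching upper bound forces $\partial g_\nu(w)$ to be the singleton $\{p\}$: any subgradient $s$ satisfies $\langle s,d\rangle\le\langle p,d\rangle$ for every direction $d$ after the standard $t\downarrow0$ limit, hence $s=p$. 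A finite convex function with singleton subdifferential is differentiable there, so $\nabla g_\nu(w)=p=\tfrac1\nu(w-Ax^*)$, independent of the chosen minimizer by the first step.

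Finally, for the Lipschitz bound I would use the optimality condition $\tfrac1\nu A^\top(w-Ax^*)\in\partial g(x^*)$ together with monotonicity of $\partial g$. Writing $u=Ax^*(w)$ and $u'=Ax^*(w')$, monotonicity (after moving $A^\top$ across the inner product via the adjoint identity) gives $\langle w-w',u-u'\rangle\ge\|u-u'\|^2$, i.e. the map $w\mapsto Ax^*(w)$ is firmly nonexpansive. Since $\nu\bigl(\nabla g_\nu(w)-\nabla g_\nu(w')\bigr)=(w-w')-(u-u')$, expanding the square and applying firm nonexpansiveness gives $\nu^2\|\nabla g_\nu(w)-\nabla g_\nu(w')\|^2\le\|w-w'\|^2-\|u-u'\|^2\le\|w-w'\|^2$, which is exactly $\lip(\nabla g_\nu)\le\tfrac1\nu$.

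The step I expect to be the main obstacle is not any of these estimates but the \emph{existence} of a minimizer: boundedness below of $g$ does not by itself guarantee $x_\nu(w)\neq\emptyset$ (take $g(x)=e^{-x}$ with $A=0$), so to make the gradient formula meaningful one must either assume attainment (e.g. coercivity of $\psi_w$, injectivity of $A$, or coercivity of $g$) or route the smoothness argument through the Moreau envelope of the \emph{closure} of $G$, whose proximal point exists and is unique even when the fiberwise infimum defining $G$ is not attained. I would flag this hypothesis explicitly rather than let it hide inside the notation $x_\nu(w)$.
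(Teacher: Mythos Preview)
Your proposal is correct and, at the conceptual level, matches the paper: both you and the authors reduce $g_\nu$ to the Moreau envelope of the image function $G(u)=\inf\{g(x):Ax=u\}$. The distribution of effort differs, however. You prove the three quantitative claims (null-space, gradient formula, $\tfrac1\nu$-Lipschitz bound) by hand---strict convexity of the square, the quadratic majorant, and firm nonexpansiveness via monotonicity of $\partial g$---and then flag the attainment of $x_\nu(w)$ as the genuine obstacle, suggesting one route through the closure of $G$. The paper does exactly the opposite: it spends its effort precisely on that obstacle, proving carefully that $g_\nu(w)$ coincides with the Moreau envelope of $\mathrm{cl}\,G$ (using Rockafellar's line-segment characterization of closure, so that the envelope value is a limit of attained values), and then invokes \cite{RW98}~Theorem~2.26 as a black box for the $C^1$ and Lipschitz conclusions. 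Your version is more self-contained on the analytic side and also supplies the null-space claim, which the paper's proof leaves implicit; the paper's version actually closes the existence gap you identified rather than just flagging it. Combining your direct estimates with the paper's closure argument would give the cleanest complete proof.
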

\begin{proof}
The proof is given in Appendix~\ref{sec:appendix}.
\end{proof}
Theorem~\ref{co:lip_cont} establishes the smoothness of $g_\nu$.
By the separability of $h$, $\prox_{\gamma h}$ decouples into a set of scalar optimization problems
\[\begin{aligned}
\prox_{\gamma h}(v) &= \argmin_w \tfrac{1}{2\gamma}\|w - v\|^2 + h(w)\\
&= \begin{bmatrix}
\argmin_{w_1} \frac{1}{2\gamma}(w_1 - v_1)^2 + h_1(w_1)\\
\vdots\\
\argmin_{w_m} \frac{1}{2\gamma}(w_m - v_m)^2 + h_m(w_m)
\end{bmatrix}.
\end{aligned}\]
Even though $h$ is nonconvex and nonsmooth, scalar problems are typically easy to solve.  
To implement the motivating examples, we found closed form solutions for the prox operators in examples 1, 3, 4, 
and implemented a Newton method for semi-supervised logistic regression in example 2. Some $h$ require root-finding or bi-section techniques, but due to the 
separability assumption, these methods need only be applied to scalar problems. 

The PGD algorithm is detailed in Algorithm~\ref{alg:pg_w}. 
\begin{algorithm}[h!]
\caption{Proximal Gradient Descent  for $h(w) + g_\nu(w)$}
\label{alg:pg_w}
\begin{algorithmic}[1]
\Require{$w^0$}
\Statex{Initialize: $k=0$}
\While{not converge}
\Let{$w^{k+1}$}{$\prox_{\nu h}(w^k - \nu\nabla g_\nu(w^k))$} 
\Let{$k$}{$k+1$}
\EndWhile
\Ensure{$w^k$}
\end{algorithmic}
\end{algorithm}

We can write the $w$-update in Algorithm~\ref{alg:pg_w} explicitly: 
\begin{equation}
\label{eq:explicit}
\prox_{\nu h}(w^k - \nu\nabla g_\nu(w^k)) = \prox_{\nu h}(A x^k), \quad x^k(w^k) \in \argmin_x~g(x) + \frac{1}{2\nu}\|Ax - w^k\|^2.
\end{equation}
%

%
%
In the next section, we analyze the behavior of Algorithm~\ref{alg:pg_w} under different assumptions. 

\subsection{Convergence Analysis}

The goal for Algorithm~\ref{alg:pg_w} is to find the stationary point for \eqref{eq:obj_w}, defined as follows.
\begin{definition}[Stationary Point]
A point $\bar w\in\R^m$ is called a stationary point for \eqref{eq:obj_w} if 
\[
0 \in \nabla g_\nu (w) + \partial h(w).
\]
Equivalently, we can write 
\[
0\in \lt\{\partial h(\bar w) + \tfrac{1}{\nu}\left(I- A\left(\partial g + \frac{1}{\nu}A^\top A\right)^{-1}A^\top\right)\bar w\rt\}:=\cS(\bar w).
\]
where $\left(\partial g + \frac{1}{\nu}A^\top A\right)^{-1}\bar w$ is a nonlinear (possibly multi-valued) operator that gives the 
set of solutions $x(\bar w)$ to the problem in~\eqref{eq:explicit}. 
\end{definition}
Motivated by this definition, we define the following quantity to measure optimality.
\begin{definition}[Optimality Condition]
We denote
\begin{equation}
\label{eq:optcond}
T_\nu(w) = \min\left\{ \|v\|^2: v\in\cS(\bar w)\right\},
\end{equation}
as the optimality condition of \eqref{eq:obj_w}.
\end{definition}
%

%


Convergence rates of Algorithm~\ref{alg:pg_w} depends on additional assumptions on $h$ and $g$, 
and are summarized in Table~\ref{tbl:converge_rate}.
All proofs for this section are collected in Appendix~\ref{sec:appendix}.

\begin{table}[h]
\centering
\def\arraystretch{1.2}
\begin{tabular}{ c || c}
 &  Rate of Convergence\\\hline\hline
Assumption~\ref{asp:gen}  & $\bar T_\nu^k \le \frac{2}{\nu k}[p_\nu(w^0) - p_\nu^*]$\\\hline
Assumption~\ref{asp:cvx}  & $p_\nu(w^k) - p_\nu^* \le \frac{\|w^0 - w^*\|^2}{2\nu(k+1)}$ \\\hline
Assumption~\ref{asp:strong_cvx_noreg}  & $\|w^{k+1} - w^*\|^2\le\tfrac{1}{1+\alpha\nu}\|w^k - w^*\|^2$ \\\hline
Assumption~\ref{asp:sharp}  & $\|w^{k+1} - w^*\|\le\tfrac{1}{\alpha\nu}\|w^k - w^*\|^2$
\end{tabular}
\caption{\label{tbl:converge_rate} Summary of convergence rates for Algorithm~\ref{alg:pg_w}. 
We denote $\bar T_\nu^k$ as the average of quantity \eqref{eq:optcond} in $k$ steps, namely
$\tfrac{1}{k}\sum_{i=1}^k T_\nu(x^i, w^i)$. $p_\nu^*$ and $w^*$ are the optimal objective value and optimal solution
in the convex case.}
\end{table}

We now analyze Algorithm~\ref{alg:pg_w} under different assumptions on $h$ and $g$.
We start the analysis under the weakest assumptions ($h$ prox-bounded and $g$ closed convex), 
and continue to much stronger assumptions ($h$ has a sharp minimum and $g=0$).
The latter results help us  understand the empirically observed local behavior of Algorithm~\ref{alg:pg_w}. 

In order for problem \eqref{eq:obj_w} to be well-defined, we assume that $p_\nu$ is bounded below, and that the minimum can be attained, 
and define 
\[
p_\nu^* = \min_w~p_\nu(w), \quad w^* = \argmin_{w}~p_\nu(w).
\]

\subsubsection{General Case}
\begin{assumption}
\label{asp:gen}
$h$ is prox-bounded, so that there exists a $\overline \nu$ with $\prox_{\nu h}(x)$ nonempty for all $x$ and $\nu > \overline \nu$;
 $g$ is convex. 
\end{assumption}

\begin{theorem}
\label{th:al_proj}
If Assumption~\ref{asp:gen} holds, the iterates generated by Algorithm~\ref{alg:pg_w} satisfy,
\[
\tfrac{1}{\nu}A(x^{k-1} - x^k) \in \partial h(w^k) + \tfrac{1}{\nu}(w^k - Ax^k),\quad \mbox{where}\quad 0 \in \partial g(x^k) + \tfrac{1}{\nu}A^\top(Ax^k - w^k).
\]
moreover,
\[
\bar T_\nu^k:= \frac{1}{k}\sum_{i=1}^kT_\nu(w^i) \le \frac{1}{k}\sum_{i=1}^k\lt\|\tfrac{1}{\nu}A(x^{i-1} - x^i)\rt\|^2 \le \frac{2}{\nu k}[p_\nu(w^0) - p_\nu^*].
\]
\end{theorem}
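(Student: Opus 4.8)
The plan is to recognize Algorithm~\ref{alg:pg_w} as block-coordinate (alternating) minimization on the joint objective $f_\nu(x,w)=h(w)+\tfrac{1}{2\nu}\|Ax-w\|^2+g(x)$ of \eqref{eq:relax}: the explicit form \eqref{eq:explicit} shows that each sweep first computes $x^k\in\argmin_x f_\nu(x,w^k)$ (the minimizer defining $g_\nu(w^k)$ in \eqref{eq:partial_g}) and then $w^{k+1}=\prox_{\nu h}(Ax^k)\in\argmin_w f_\nu(x^k,w)$. First I would write the first-order optimality conditions of these two subproblems. Minimality of $w^k=\prox_{\nu h}(Ax^{k-1})$ gives $\tfrac{1}{\nu}(Ax^{k-1}-w^k)\in\partial h(w^k)$, while minimality of $x^k$ gives the stated side condition $0\in\partial g(x^k)+\tfrac{1}{\nu}A^\top(Ax^k-w^k)$. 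Adding $\tfrac{1}{\nu}(w^k-Ax^k)=\nabla g_\nu(w^k)$ (Theorem~\ref{co:lip_cont}) to the first inclusion cancels the $w^k$ term and yields $\tfrac{1}{\nu}A(x^{k-1}-x^k)\in\partial h(w^k)+\tfrac{1}{\nu}(w^k-Ax^k)=\cS(w^k)$, which is the first assertion; by the definition \eqref{eq:optcond} of $T_\nu$ this immediately gives $T_\nu(w^k)\le\lt\|\tfrac{1}{\nu}A(x^{k-1}-x^k)\rt\|^2$, the first inequality of the rate.

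For the rate itself, the crucial identity is $p_\nu(w^k)=f_\nu(x^k,w^k)$, which holds because $x^k$ minimizes $f_\nu(\cdot,w^k)$ (compare \eqref{eq:partial_g} and \eqref{eq:obj_w}). The per-iteration decrease comes from the $x$-subproblem: since $g$ is convex and $x\mapsto\tfrac{1}{2\nu}\|Ax-w^k\|^2$ is quadratic with Hessian $\tfrac{1}{\nu}A^\top A$, I would expand this quadratic exactly about its minimizer $x^k$ and combine it with a subgradient inequality for $g$. The linear terms cancel through the optimality condition $0\in\partial g(x^k)+\tfrac{1}{\nu}A^\top(Ax^k-w^k)$, leaving the strong-convexity-along-$A$ estimate $f_\nu(x^{k-1},w^k)\ge f_\nu(x^k,w^k)+\tfrac{1}{2\nu}\|A(x^{k-1}-x^k)\|^2$.

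I would then chain the monotonicities of the two alternating steps: $p_\nu(w^{k-1})=f_\nu(x^{k-1},w^{k-1})\ge f_\nu(x^{k-1},w^k)$ (since $w^k$ minimizes $f_\nu(x^{k-1},\cdot)$) $\ge f_\nu(x^k,w^k)+\tfrac{1}{2\nu}\|A(x^{k-1}-x^k)\|^2=p_\nu(w^k)+\tfrac{\nu}{2}\lt\|\tfrac{1}{\nu}A(x^{k-1}-x^k)\rt\|^2$. Summing this telescoping bound over $i=1,\dots,k$ and using $p_\nu(w^k)\ge p_\nu^*$ yields $\sum_{i=1}^k\lt\|\tfrac{1}{\nu}A(x^{i-1}-x^i)\rt\|^2\le\tfrac{2}{\nu}[p_\nu(w^0)-p_\nu^*]$; dividing by $k$ and inserting the per-step bound $T_\nu(w^i)\le\lt\|\tfrac{1}{\nu}A(x^{i-1}-x^i)\rt\|^2$ from the first step produces the full chain of inequalities. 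Here prox-boundedness in Assumption~\ref{asp:gen} is exactly what makes the $w$-update well defined, and convexity of $g$ is all that is used in the $x$-subproblem, so no weak convexity of $h$ enters.

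The main obstacle is the descent estimate together with the bookkeeping of the $\nu$ factors: the $x$-subproblem naturally produces $\tfrac{1}{2\nu}\|A(x^{k-1}-x^k)\|^2$, and one must match this against the measured residual $\lt\|\tfrac{1}{\nu}A(x^{k-1}-x^k)\rt\|^2$ to land the clean $\tfrac{2}{\nu k}$ constant. The subsidiary but essential point is the identity $p_\nu(w^k)=f_\nu(x^k,w^k)$, which is what collapses the alternating-minimization decrease into a telescoping bound on $p_\nu$ alone.
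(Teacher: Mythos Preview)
Your proposal is correct and follows essentially the same approach as the paper: both arguments extract the optimality conditions of the $w$- and $x$-subproblems, use convexity of $g$ together with the quadratic structure of $\tfrac{1}{2\nu}\|Ax-w^k\|^2$ to obtain the per-step decrease $p_\nu(w^{k-1})-p_\nu(w^k)\ge \tfrac{1}{2\nu}\|A(x^{k-1}-x^k)\|^2$, and then telescope. Your alternating-minimization framing is a bit cleaner than the paper's explicit expansion of the quadratic, but the underlying computation is identical.
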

We thus obtain a sublinear rate of convergence for the optimality condition. Note that this rate  is independent of linear map $A$.

\subsubsection{Convex Case}
\begin{assumption}
\label{asp:cvx}
$h$ and $g$ are both proper closed convex functions.
\end{assumption}

In this case, $h(w) + g_\nu(w)$ is a sum of a convex nonsmooth and convex smooth functions. 
This problem class has been exhaustively studied; see e.g. the survey of~\cite{parikh2014proximal}. 
The FISTA algorithm~\citep{beck}, detailed in Algorithm~\ref{alg:fista_w}, can achieve 
faster convergence rates for this problem than Algorithm~\ref{alg:pg_w}.


\begin{theorem}
\label{th:pg_w}
If Assumption~\ref{asp:cvx} holds, the iterates generated by Algorithm~\ref{alg:pg_w} satisfy,
\[
p(w^k) - p_\nu^* \le \frac{\|w^0 - w^*\|^2}{2\nu(k+1)}.
\]
\end{theorem}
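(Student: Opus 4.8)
The plan is to recognize Theorem~\ref{th:pg_w} as the classical $O(1/k)$ guarantee for proximal gradient descent applied to a sum of a convex smooth term and a convex nonsmooth term, run at the exact step size $1/L$. The two ingredients that make the standard machinery apply are already in hand: Theorem~\ref{co:lip_cont} tells us that $g_\nu$ is convex, $C^1$, and satisfies $\lip(\nabla g_\nu)\le 1/\nu$; and Assumption~\ref{asp:cvx} gives that $h$ is closed convex. Since Algorithm~\ref{alg:pg_w} uses step size $\nu$ while the Lipschitz constant of $\nabla g_\nu$ is $L=1/\nu$, we sit precisely in the regime $\text{step}=1/L$ in which the sublinear rate holds. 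The one wrinkle specific to our setting is that the minimizer $x_\nu(w)$ defining $\nabla g_\nu(w)=\tfrac{1}{\nu}(w-Ax)$ may be multivalued; but Theorem~\ref{co:lip_cont} guarantees that $\nabla g_\nu$ is nonetheless single-valued and $1/\nu$-Lipschitz, so this multiplicity never enters the estimates.

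First I would establish the fundamental per-iteration inequality: for every iterate and every comparison point $u$,
\[
p_\nu(w^{k+1}) \le p_\nu(u) + \tfrac{1}{2\nu}\lt(\|w^k - u\|^2 - \|w^{k+1} - u\|^2\rt).
\]
To derive it, write $w^{k+1}=\prox_{\nu h}(w^k-\nu\nabla g_\nu(w^k))$ and read off the first-order optimality condition of the prox, which yields a subgradient $s:=\tfrac{1}{\nu}(w^k-w^{k+1})-\nabla g_\nu(w^k)\in\partial h(w^{k+1})$. Then I would combine three facts: the descent lemma for $g_\nu$ (using its $1/\nu$-smoothness), the subgradient inequality for $h$ at $w^{k+1}$ tested against $u$, and the convexity inequality for $g_\nu$ at $w^k$ tested against $u$. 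Adding these and collecting the inner-product terms, the contributions of $\nabla g_\nu(w^k)$ cancel, and a short completing-the-square identity rewrites the remainder exactly as the telescoping quantity $\tfrac{1}{2\nu}(\|w^k-u\|^2-\|w^{k+1}-u\|^2)$.

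With the per-iteration inequality in hand, the conclusion is routine. Setting $u=w^*$ and summing over the iterations telescopes the right-hand side to at most $\tfrac{1}{2\nu}\|w^0-w^*\|^2$. Taking instead $u=w^k$ in the same inequality shows that the objective values $p_\nu(w^k)$ are nonincreasing, so the final (smallest) gap is bounded by the average of the summed gaps; dividing the telescoped bound by the number of iterations then produces the stated rate $\tfrac{\|w^0-w^*\|^2}{2\nu(k+1)}$. I expect no genuine obstacle: the entire argument is the textbook FISTA-style analysis (cf.~\cite{beck}), and the only point requiring care is confirming that the abstract smoothness and convexity properties of $g_\nu$ supplied by Theorem~\ref{co:lip_cont} are exactly what the standard proof consumes.
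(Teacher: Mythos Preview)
Your proposal is correct and follows the standard proximal-gradient analysis. The paper does not actually supply its own proof of Theorem~\ref{th:pg_w}: despite the blanket statement that all proofs are in the appendix, no argument for this theorem appears there; the surrounding text simply remarks that the convex case ``has been exhaustively studied'' and points to \cite{parikh2014proximal}, treating the $O(1/k)$ rate as a known fact (just as Theorem~\ref{th:fista_w} is attributed directly to \cite{beck}). Your write-up therefore supplies precisely the classical derivation the paper defers to the literature, and the only paper-specific ingredient you need---that $g_\nu$ is convex with $\tfrac{1}{\nu}$-Lipschitz gradient---is exactly what Theorem~\ref{co:lip_cont} provides.
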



\begin{algorithm}[h!]
\caption{FISTA for $h(w) + g_\nu(w)$}
\label{alg:fista_w}
\begin{algorithmic}[1]
\Require{$w^0$}
\Statex{Initialize: $k=0$, $a_0 = 1$, $v^0 = w^0$}
\While{not converge}
\Let{$w^{k+1}$}{$\prox_{\nu h}\lt(v^k - \nu\nabla g_\nu(v^k)\rt)$}
\Let{$a^{k+1}$}{$\frac{1+\sqrt{1+4(a^k)^2}}{2}$}
\Let{$v^{k+1}$}{$w^{k+1} + \frac{a^{k-1}}{a^{k+1}}(w^{k+1} - w^{k})$}
\Let{$k$}{$k+1$}
\EndWhile
\Ensure{$w^k$}
\end{algorithmic}
\end{algorithm}
\noindent

\begin{theorem}
\label{th:fista_w}
If Assumption~\ref{asp:cvx} holds, the iterates generated by Algorithm~\ref{alg:fista_w} satisfy~\citep{beck}:
\[
p_\nu(w^k) - p_\nu^* \le \frac{2\|w^0-w^*\|^2}{\nu(k+1)^2}.
\]
\end{theorem}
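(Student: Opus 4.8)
The plan is to recognize Theorem~\ref{th:fista_w} as a direct instance of the accelerated proximal-gradient (FISTA) convergence theorem of~\cite{beck}, so that the real work lies entirely in verifying that the hypotheses of that theorem hold and that Algorithm~\ref{alg:fista_w} is literally FISTA applied to $p_\nu = h + g_\nu$ with the correct step size. First I would record the composite structure: under Assumption~\ref{asp:cvx} the function $h$ is closed, proper, and convex, while Theorem~\ref{co:lip_cont} guarantees that $g_\nu$ is convex, $C^1$-smooth, and has a gradient with $\lip(\nabla g_\nu) \le \tfrac{1}{\nu}$. Hence $p_\nu$ is exactly of the form ``smooth convex with $L$-Lipschitz gradient plus closed convex'' required by the FISTA framework, with Lipschitz constant $L = \tfrac{1}{\nu}$.

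Next I would check that Algorithm~\ref{alg:fista_w} coincides with FISTA run at the constant step size $\tfrac{1}{L} = \nu$. The update $w^{k+1} = \prox_{\nu h}(v^k - \nu \nabla g_\nu(v^k))$ is precisely the composite gradient (forward--backward) step with step $\nu$, and the coefficients generated by $a^{k+1} = \tfrac{1 + \sqrt{1 + 4(a^k)^2}}{2}$ with $a^0 = 1$ are exactly the extrapolation (momentum) parameters of~\cite{beck}. Having matched both the objective and the iteration, the cited theorem yields
\[
p_\nu(w^k) - p_\nu^* \le \frac{2L\|w^0 - w^*\|^2}{(k+1)^2},
\]
and substituting $L = \tfrac{1}{\nu}$ produces the stated bound.

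For completeness, if one wanted a self-contained argument rather than a citation, I would reproduce the standard three-step Lyapunov analysis: (i) the proximal-gradient inequality, which for any point $u$ bounds $p_\nu(w^{k+1})$ from above in terms of $p_\nu(u)$ plus a quadratic in $v^k$, $w^{k+1}$, and $u$, and which follows from the descent lemma for $g_\nu$ together with the subgradient inequality for $h$ evaluated at the prox point; (ii) combining this inequality at the two choices $u = w^k$ and $u = w^*$, weighted by the momentum parameters, to telescope a nonincreasing energy built from $(a^k)^2\bigl(p_\nu(w^k) - p_\nu^*\bigr)$ plus a squared-distance term; and (iii) the elementary bound $a^k \ge \tfrac{k+1}{2}$, which converts the controlled energy into the $1/(k+1)^2$ rate.

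The main obstacle is bookkeeping rather than a conceptual difficulty: all the hard analysis is packaged in the Lipschitz estimate of Theorem~\ref{co:lip_cont} and in the cited FISTA theorem. The only point requiring genuine care is confirming that the two convexity hypotheses of Assumption~\ref{asp:cvx} are used in the right places, namely that convexity of $h$ is exactly what licenses the subgradient inequality at the prox step, while convexity of $g_\nu$ (not merely its smoothness) is what permits the gradient inequality used in step (i). Once these are in place, the constant $L = 1/\nu$ flows directly into the rate.
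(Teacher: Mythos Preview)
Your proposal is correct and matches the paper's own treatment: the paper does not give a separate proof of Theorem~\ref{th:fista_w} but simply cites~\cite{beck} in the statement, relying on Theorem~\ref{co:lip_cont} to supply the convexity and $1/\nu$-Lipschitz gradient of $g_\nu$ so that Algorithm~\ref{alg:fista_w} is FISTA with step size $\nu = 1/L$. Your verification of the hypotheses and the optional self-contained Lyapunov sketch are both accurate and go slightly beyond what the paper records.
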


\subsubsection{Strongly Convex Case}

In two  of our motivating examples, we take $g=0$.
In this case, we have a closed form solution for \eqref{eq:partial_g},
\[
g_\nu(w) = \frac{1}{2\nu}\|(I - P_A) w\|^2, \quad \mbox{where} \quad P_A = A(A^\top A)^\dagger A^\top,
\]
and $\dagger$ denotes the pseudo inverse.

\begin{assumption}
\label{asp:strong_cvx_noreg}
$h$ is $\alpha$-strongly convex and $g = 0$.
\end{assumption}


\begin{theorem}
\label{th:linear}
When Assumption~\ref{asp:strong_cvx_noreg} holds, the iterates generated by Algorithm~\ref{alg:pg_w} satisfy,
\[
\|w^{k+1} - w^*\|^2 \le \frac{1}{1+ \alpha\nu}\|w^k - w^*\|^2.
\]
\end{theorem}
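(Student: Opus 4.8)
The plan is to exploit the explicit quadratic form of $g_\nu$ available when $g=0$, reduce the update to a projection followed by a prox, and then use the contractivity of $\prox_{\nu h}$ induced by strong convexity of $h$. First I would specialize the update. Since $g=0$, the closed form $g_\nu(w)=\tfrac{1}{2\nu}\|(I-P_A)w\|^2$ with $P_A=A(A^\top A)^\dagger A^\top$ gives the linear gradient $\nabla g_\nu(w)=\tfrac{1}{\nu}(I-P_A)w$ (consistent with Theorem~\ref{co:lip_cont}). The key observation is that the gradient step collapses to a projection:
\[
w^k-\nu\nabla g_\nu(w^k)=w^k-(I-P_A)w^k=P_A w^k,
\]
so the PGD iteration of Algorithm~\ref{alg:pg_w} becomes simply $w^{k+1}=\prox_{\nu h}(P_A w^k)$.

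Next I would assemble two ingredients. The first is that $\prox_{\nu h}$ is a $\tfrac{1}{1+\alpha\nu}$-contraction. Writing $\prox_{\nu h}=(I+\nu\partial h)^{-1}$, for $p=\prox_{\nu h}(x)$ and $q=\prox_{\nu h}(y)$ we have $\tfrac{x-p}{\nu}\in\partial h(p)$ and $\tfrac{y-q}{\nu}\in\partial h(q)$; $\alpha$-strong convexity makes $\partial h$ $\alpha$-strongly monotone, so $\langle x-y,p-q\rangle\ge(1+\alpha\nu)\|p-q\|^2$, and Cauchy--Schwarz yields $\|p-q\|\le\tfrac{1}{1+\alpha\nu}\|x-y\|$. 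The second ingredient is the fixed-point characterization of the (unique, by strong convexity) minimizer $w^*$: the stationarity condition $0\in\nabla g_\nu(w^*)+\partial h(w^*)$ rearranges to $w^*-\nu\nabla g_\nu(w^*)\in(I+\nu\partial h)(w^*)$, i.e. $w^*=\prox_{\nu h}(P_A w^*)$, matching the iteration.

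Finally I would combine these. Applying the contraction estimate and then the nonexpansiveness of the orthogonal projection $P_A$ gives
\[
\|w^{k+1}-w^*\|=\|\prox_{\nu h}(P_A w^k)-\prox_{\nu h}(P_A w^*)\|\le\tfrac{1}{1+\alpha\nu}\|P_A(w^k-w^*)\|\le\tfrac{1}{1+\alpha\nu}\|w^k-w^*\|,
\]
and squaring yields $\|w^{k+1}-w^*\|^2\le\tfrac{1}{(1+\alpha\nu)^2}\|w^k-w^*\|^2\le\tfrac{1}{1+\alpha\nu}\|w^k-w^*\|^2$, where the last inequality uses $1+\alpha\nu\ge1$; this is the claimed bound (in fact with a factor to spare). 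I do not expect a genuine obstacle here: the only points needing care are the derivation of the exact contraction factor of the resolvent from strong monotonicity, and verifying that the stationarity equation coincides with the fixed point of the simplified iteration. The projection simplification $w^k-\nu\nabla g_\nu(w^k)=P_A w^k$ makes the usual ``nonexpansiveness of the gradient step'' argument trivial, which is what keeps the whole estimate clean.
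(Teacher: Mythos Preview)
Your proof is correct, and it takes a genuinely different route from the paper. The paper proceeds by introducing $d^k=\tfrac{1}{\nu}(w^k-w^{k+1})$ and proving an auxiliary inequality (Lemma~\ref{lm:linear}) of the form
\[
\ip{w^k-w^*,d^k}\ge \tfrac{1}{2\nu}\|(I-P_A)(w^k-w^*)\|^2+\tfrac{1}{\nu}\|\nu d^k\|^2-\tfrac{1}{2\nu}\|\nu(I-P_A)d^k\|^2+\tfrac{\alpha}{2}\|w^{k+1}-w^*\|^2,
\]
obtained by expanding $p_\nu(w^{k+1})$ and using strong convexity of $h$ once in the form $h(w^{k+1})\le h(w^*)+\ip{\partial h(w^{k+1}),w^{k+1}-w^*}-\tfrac{\alpha}{2}\|w^{k+1}-w^*\|^2$. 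This lemma is then substituted into the expansion $\|w^{k+1}-w^*\|^2=\|w^k-w^*\|^2-2\nu\ip{w^k-w^*,d^k}+\|\nu d^k\|^2$, and after orthogonally decomposing with $P_A$ one arrives at $(1+\alpha\nu)\|w^{k+1}-w^*\|^2\le\|P_A(w^k-w^*)\|^2-\|\nu P_A d^k\|^2$, hence the claim.

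Your argument is more elementary: you recognize that when $g=0$ the gradient step collapses to $P_Aw^k$, reduce the iteration to $w^{k+1}=\prox_{\nu h}(P_Aw^k)$ with fixed point $w^*=\prox_{\nu h}(P_Aw^*)$, and invoke the $\tfrac{1}{1+\alpha\nu}$-Lipschitz property of the resolvent of a strongly monotone operator together with nonexpansiveness of $P_A$. This buys you a shorter proof and a strictly better constant, $\tfrac{1}{(1+\alpha\nu)^2}$, which you then relax to match the statement. The paper's Lemma~\ref{lm:linear} route, by contrast, produces the sharper intermediate bound $(1+\alpha\nu)\|w^{k+1}-w^*\|^2\le\|P_A(w^k-w^*)\|^2-\|P_A(w^k-w^{k+1})\|^2$, which is reused later (Lemma~\ref{lm:sharp2}) in the sharp-minimum analysis; your contraction argument does not directly expose that extra negative term.
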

That is, we obtain a linear convergence rate in this case. 

\subsubsection{Sharp Minima Case}

The final assumption concerns {\it sharp minima}, see 
\cite{al1991finite, cromme1978strong, hettich1983review, polyak1979sharp,burke1993weak} and Figure~\ref{fig:cone}.
\begin{definition}
We say the minimizer $w^*$ of $p_\nu$ is a sharp minimum, if there exist $\delta, \alpha >0$, such that,
\[
p_\nu(w) - p_\nu(w^*) \ge \alpha \|w - w^*\|, \quad \forall w\in\{w: \|w - w^*\| \le \delta\}.
\]
\end{definition}

\begin{figure}[h]
\centering
\includegraphics[width=0.6\textwidth]{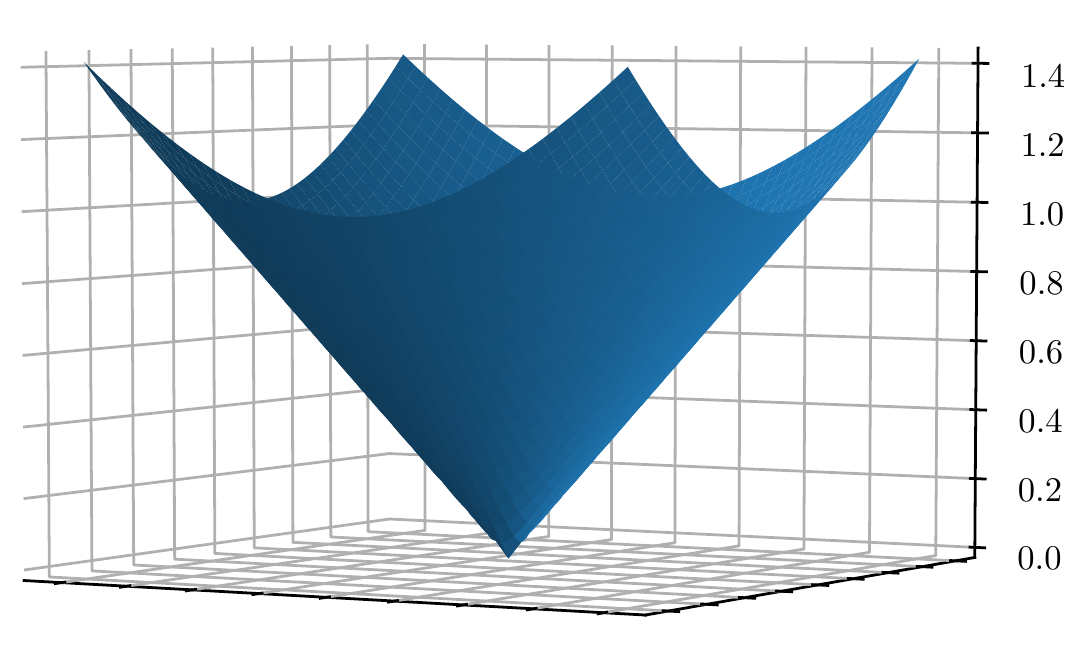}
\caption{Local function values grow quickly away from a {\it sharp minimum}.}
\label{fig:cone}
\end{figure}

\begin{assumption}
\label{asp:sharp}
$h$ is proper closed convex, $g = 0$ and $w^*$ is a sharp minimum of $p_\nu$.
\end{assumption}


\begin{theorem}
\label{th:sharp}
If Assumption~\ref{asp:sharp} holds, and there exists an iteration $K$ with that,
\[
\|w^k - w^*\| \le \delta
\]
then for all $k \ge K$, iterates generated by Algorithm~\ref{alg:pg_w} satisfy
\[
\|w^{k+1} - w^*\| \le \min\lt\{\|w^k - w^*\|, \frac{1}{\alpha\nu}\|w^k - w^*\|^2\rt\}.
\]
\end{theorem}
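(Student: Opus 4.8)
The plan is to combine the textbook proximal-gradient descent inequality with the sharp-minimum lower bound, using the crucial fact that with $g=0$ the constant $1/\nu$ is \emph{simultaneously} the step-size reciprocal and an upper bound on $\lip(\nabla g_\nu)$ (Theorem~\ref{co:lip_cont}). Thus Algorithm~\ref{alg:pg_w} runs at the critical step size $\nu = 1/L$, and the descent lemma applies with tight constant. Since $g_\nu$ is convex and $C^1$ with $\lip(\nabla g_\nu)\le 1/\nu$ and $h$ is convex, the classical one-step estimate (e.g.\ Beck--Teboulle) at the point $w^k$ gives, for every $z$,
\[
p_\nu(w^{k+1}) \le p_\nu(z) - \tfrac{1}{2\nu}\|w^{k+1}-w^k\|^2 - \tfrac{1}{\nu}\ip{w^k - z,\; w^{k+1}-w^k}.
\]

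Next I would specialize this to $z = w^*$ and apply the polarization identity $2\ip{w^k-w^*,\, w^{k+1}-w^k} = \|w^{k+1}-w^*\|^2 - \|w^k-w^*\|^2 - \|w^{k+1}-w^k\|^2$, which telescopes the right-hand side into the standard energy inequality
\[
p_\nu(w^{k+1}) - p_\nu(w^*) \le \tfrac{1}{2\nu}\bigl(\|w^k-w^*\|^2 - \|w^{k+1}-w^*\|^2\bigr).
\]
Because $w^*$ minimizes $p_\nu$, the left side is nonnegative, forcing $\|w^{k+1}-w^*\| \le \|w^k-w^*\|$. This already supplies the first term of the claimed minimum, and more importantly it is the inductive engine: starting from $\|w^K - w^*\|\le\delta$, the distances are nonincreasing, so \emph{every} iterate $w^k$ with $k\ge K$ stays in the $\delta$-ball and the sharp-minimum inequality remains valid at each of them. (An equivalent, even shorter route to this monotonicity is to note that for $g=0$ the update reads $w^{k+1}=\prox_{\nu h}(P_A w^k)$, that $w^*=\prox_{\nu h}(P_A w^*)$ is a fixed point by stationarity, and that $\prox_{\nu h}$ is nonexpansive with $\|P_A\|\le 1$.)

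Finally I would insert the sharp-minimum bound $p_\nu(w^{k+1}) - p_\nu(w^*) \ge \alpha\|w^{k+1}-w^*\|$ into the left side of the energy inequality and discard the nonnegative $-\tfrac{1}{2\nu}\|w^{k+1}-w^*\|^2$ term, obtaining
\[
\alpha\|w^{k+1}-w^*\| \le \tfrac{1}{2\nu}\|w^k-w^*\|^2,
\]
hence $\|w^{k+1}-w^*\| \le \tfrac{1}{2\alpha\nu}\|w^k-w^*\|^2 \le \tfrac{1}{\alpha\nu}\|w^k-w^*\|^2$; taking the minimum of this with the monotonicity bound yields the statement. The only delicate points, neither of which is a genuine obstacle, are (i) checking that the descent lemma is invoked at the exact borderline constant $L=1/\nu$, which is legitimate since $\lip(\nabla g_\nu)\le 1/\nu$, and (ii) the self-consistency of the induction that keeps the iterates inside the region where the sharp-minimum estimate holds — both handled by the monotone decrease of $\|w^k - w^*\|$.
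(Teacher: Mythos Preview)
Your argument is correct and in fact a bit sharper than the paper's: the standard proximal-gradient one-step energy inequality at the borderline step size $\nu = 1/L$ (legitimate since Theorem~\ref{co:lip_cont} gives $\lip(\nabla g_\nu)\le 1/\nu$) yields
\[
p_\nu(w^{k+1}) - p_\nu(w^*) \le \tfrac{1}{2\nu}\bigl(\|w^k-w^*\|^2 - \|w^{k+1}-w^*\|^2\bigr),
\]
from which both the monotone decrease and the quadratic bound $\|w^{k+1}-w^*\|\le\tfrac{1}{2\alpha\nu}\|w^k-w^*\|^2$ follow directly; the induction that keeps all iterates in the sharp-minimum ball is exactly as you describe.

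The paper takes a different route that is specific to the assumption $g=0$. It works directly with the projection $P_A$: from $w^{k+1}=\prox_{\nu h}(P_A w^k)$ one has $\tfrac{1}{\nu}P_A(w^k-w^{k+1})\in\partial p_\nu(w^{k+1})$, and convexity plus Cauchy--Schwarz give $p_\nu(w^{k+1})-p_\nu(w^*)\le\tfrac{1}{\nu}\|P_A(w^k-w^{k+1})\|\,\|w^k-w^*\|$. Two auxiliary lemmas then bound $\|P_A(w^k-w^{k+1})\|\le\|P_A(w^k-w^*)\|$ and $\|w^{k+1}-w^*\|\le\|P_A(w^k-w^*)\|$, yielding the stated constant $1/(\alpha\nu)$ and the monotonicity. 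Your approach is more elementary, uses no structure beyond convexity of $h$ and the Lipschitz bound on $\nabla g_\nu$ (so it would extend verbatim to general convex $g$), and even gains the factor $1/2$; the paper's approach, while more specialized, yields the slightly finer intermediate estimate $\|w^{k+1}-w^*\|\le\|P_A(w^k-w^*)\|$, which can be strictly smaller than $\|w^k-w^*\|$ when $A$ has a nontrivial kernel.
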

A sharp minimum gives us a local quadratic convergence rate.

%


\section{Trimmed Nonconvex-Composite Models}
\label{sec:trim_anal}

We apply an analogous relaxation technique to problem class~\eqref{eq:trimming}, obtaining the extended problem
\begin{equation}
\label{eq:trimming_relax}
\min_{v,x,w}~ f_\nu^t(x,w,v):=\sum_{i=1}^m v_i h_i(w_i) + g(x) +\frac{1}{2\nu}\|Ax - w\|^2, \quad \mbox{s.t.}~v \in \triangle_\tau,
\end{equation}
where each function $h_i$ is nonsmooth and nonconvex.
We use the notation $H(w) = [h_1(w_1), \ldots, h_m(w_m)]^\top$, so that $\sum_{i=1}^m v_i h_i(w_i) = \left\langle v, H(w)\right\rangle$. 

\begin{algorithm}[h!]
\caption{Block-Coordinate Descent for~\eqref{eq:trimming_relax}}
\label{alg:vp-trim}
\begin{algorithmic}[1]
\Require{$w^0$, $v^0$, $\gamma$}
\Statex{Initialize: $k=0$}
\While{not converged}
\Let{$w^{k+1}$}{$\prox_{\nu (\left\langle v, H\right\rangle)}\lt(w^k - \nu\nabla g_\nu(w^k)\rt)$}
\Let{$v^{k+1}$}{$\proj_{\triangle_\tau} (v^k - \gamma H(w^{k+1}))$}
\Let{$k$}{$k+1$}
\EndWhile
\Ensure{$w^k$}
\end{algorithmic}
\end{algorithm}

Just as in Section~\ref{sec:analysis}, we partially minimize in $x$, reducing~\eqref{eq:trimming_relax} to problem 
\begin{equation}
\label{eq:trimming_obj_w}
\min_{v,w}~p_\nu^t(w,v):=\sum_{i=1}^m v_i h_i(w_i) + g_\nu(w), \quad \mbox{s.t.}~v \in \triangle_\tau
\end{equation}
The structure of~\eqref{eq:trimming_relax} suggests a coordinate-descent algorithm detailed in Algorithm~\ref{alg:vp-trim}.

The operator $\prox_{\nu (\left\langle v, H\right\rangle)}$ decouples across coordinates; for each nonzero $v_i$, we have 
\[
\prox_{\nu (\left\langle v, H\right\rangle)}(\bar w) = \begin{bmatrix}
\argmin_{w_1} \frac{1}{2v_1 \nu} (w_1 - \bar w_1)^2 + h_1(w_1) \\
\vdots\\
\argmin_{w_m} \frac{1}{2v_m \nu} (w_m - \bar w_m)^2 + h_m(w_m)
\end{bmatrix}.
\]

We now develop a convergence analysis for Algorithm~\ref{alg:vp-trim}.
%
%
Our goal is to find the stationary point of \eqref{eq:trimming_obj_w}, defined as follows.
\begin{definition}
We call the pair $(\bar w, \bar v)$ a stationary point of $\eqref{eq:trimming_obj_w}$ when 
\[
0 \in \begin{bmatrix} \bar v_1 \partial h_1 (\bar w_1)\\
\vdots \\
\bar v_m \partial h_m(\bar w_m)
\end{bmatrix} + \nabla g_\nu(\bar w):= \cS_w^t(\bar w, \bar v), \quad 0 \in H(\bar w) + \partial \delta(\bar v | \triangle_\tau):= \cS_v^t(\bar w, \bar v).
\]
We define the following quantity to characterize stationarity: 
\[
T_\nu^t(w, v) = \min\lt\{\tfrac{\nu}{2}\|s\|^2 + \alpha\|r\|^2: s\in\cS_w^t(w,v), \, r \in\cS_v^t(w,v)\rt\}.
\]
\end{definition}

The convergence result is detailed in Theorem~\ref{th:trim}.
\begin{theorem}
\label{th:trim}
Denote by $w^k$ and $v^k$ the iterates generated by Algorithm~\ref{alg:vp-trim}.
We have the following inequality,
\[
T_\nu^t(w^{k+1}, v^{k+1}) \le p_\nu^t(w^k, v^k) - p_\nu^t(w^{k+1}, v^{k+1}).
\]
Moreover, by manipulating this inequality we obtain 
\[
\frac{1}{k}\sum_{i=1}^{k} T_\nu^t(w^{i}, v^{i}) \le \frac{1}{k} [p_\nu^t(w^0, v^0) - p_\nu^t(w^k, v^k)],
\]
which gives a sublinear rate of convergence for Algorithm~\ref{alg:vp-trim}. 
\end{theorem}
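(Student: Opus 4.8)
The plan is to prove a per-iteration sufficient-decrease inequality and then telescope. I would split the one-step change of $p_\nu^t$ into a $w$-block part $\Delta_w := p_\nu^t(w^k,v^k)-p_\nu^t(w^{k+1},v^k)$ and a $v$-block part $\Delta_v := p_\nu^t(w^{k+1},v^k)-p_\nu^t(w^{k+1},v^{k+1})$, so the right-hand side of the claimed bound is exactly $\Delta_w+\Delta_v$. The goal is then to exhibit certificates $s\in\cS_w^t(w^{k+1},v^{k+1})$ and $r\in\cS_v^t(w^{k+1},v^{k+1})$ with $\tfrac{\nu}{2}\|s\|^2\le\Delta_w$ and $\alpha\|r\|^2\le\Delta_v$; since $T_\nu^t$ is a minimum over admissible certificates, summing these gives $T_\nu^t(w^{k+1},v^{k+1})\le\Delta_w+\Delta_v$. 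The averaged statement then follows by summing over $i=0,\dots,k-1$, telescoping, and dividing by $k$, which is routine.

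For the $w$-block I would use that the update is a proximal-gradient step on $w\mapsto\langle v^k,H(w)\rangle + g_\nu(w)$, whose smooth part $g_\nu$ is convex with $\lip(\nabla g_\nu)\le 1/\nu$ by Theorem~\ref{co:lip_cont}. Writing $w^{k+1}$ as the minimizer of the prox surrogate and combining its optimality with the descent lemma for $g_\nu$ gives a sufficient decrease $\Delta_w$ bounded below by a positive multiple of $\|w^{k+1}-w^k\|^2$ (the multiple depending on how the $w$-step size compares with $1/\lip(\nabla g_\nu)$). First-order optimality of the separable prox subproblem supplies $\xi$ with $\xi_i\in v_i^k\,\partial h_i(w_i^{k+1})$ and $\xi=-\tfrac1\nu(w^{k+1}-w^k)-\nabla g_\nu(w^k)$; from this I would build $s=-\tfrac1\nu(w^{k+1}-w^k)+\bigl(\nabla g_\nu(w^{k+1})-\nabla g_\nu(w^k)\bigr)+\text{(coefficient correction)}\in\cS_w^t(w^{k+1},v^{k+1})$ and bound $\|s\|$ by $\|w^{k+1}-w^k\|$ via Lipschitz continuity. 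The weight $\tfrac{\nu}{2}$ in $T_\nu^t$ is precisely calibrated so that $\tfrac{\nu}{2}\|s\|^2$ sits at the $\tfrac1\nu\|w^{k+1}-w^k\|^2$ scale of $\Delta_w$.

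The $v$-block is cleaner, because $v\mapsto\langle v,H(w^{k+1})\rangle$ is linear, so $v^{k+1}=\proj_{\triangle_\tau}(v^k-\gamma H(w^{k+1}))$ is a projected-gradient step with zero curvature. Testing the projection inequality against $v^k\in\triangle_\tau$ yields $\langle H(w^{k+1}),v^k-v^{k+1}\rangle\ge\tfrac1\gamma\|v^{k+1}-v^k\|^2$, i.e. $\Delta_v\ge\tfrac1\gamma\|v^{k+1}-v^k\|^2$. The normal-cone optimality of the same projection gives the clean certificate $r=-\tfrac1\gamma(v^{k+1}-v^k)\in\cS_v^t(w^{k+1},v^{k+1})$, so $\alpha\|r\|^2=\tfrac{\alpha}{\gamma^2}\|v^{k+1}-v^k\|^2\le\Delta_v$ as soon as $\alpha\le\gamma$, which fixes the role of the weight $\alpha$.

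The main obstacle is the $w$-block certificate. The update evaluates the smooth gradient at $w^k$ and uses subgradient coefficients $v^k$, whereas membership in $\cS_w^t(w^{k+1},v^{k+1})$ demands $\nabla g_\nu(w^{k+1})$ and coefficients $v^{k+1}$. The gradient mismatch is absorbed by $\lip(\nabla g_\nu)\le 1/\nu$, but the coefficient mismatch forces a correction term whose size is controlled by $\|v^{k+1}-v^k\|$ together with the magnitude of the extracted $h_i$-subgradients; this term must be charged against $\Delta_v$ through the $v$-block decrease. Getting the step sizes $\nu,\gamma$ and the weights $\tfrac\nu2,\alpha$ to line up so that the two block decreases \emph{jointly} dominate $\tfrac\nu2\|s\|^2+\alpha\|r\|^2$ — rather than any single estimate closing on its own — is the delicate part of the argument.
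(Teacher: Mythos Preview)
Your overall architecture --- split the one-step decrease into $\Delta_w+\Delta_v$, produce certificates $(s,r)$, then telescope --- is exactly the paper's, and your $v$-block matches (the paper simply takes $\alpha=\gamma$). The gap is in the $w$-block. You plan to combine the descent lemma for $g_\nu$ with the prox inequality to get $\Delta_w\ge c\,\|w^{k+1}-w^k\|^2$, but Algorithm~\ref{alg:vp-trim} uses step size exactly $\nu=1/\lip(\nabla g_\nu)$, and at that step the standard prox-gradient sufficient-decrease estimate collapses to $\Delta_w\ge 0$: the $\tfrac{1}{2\nu}\|w^{k+1}-w^k\|^2$ contributed by the prox inequality and the $\tfrac{1}{2\nu}\|w^{k+1}-w^k\|^2$ from the descent lemma cancel. (Even if a margin existed, bounding $\|s\|\le\tfrac{2}{\nu}\|w^{k+1}-w^k\|$ would lose a further factor of $4$.) The paper bypasses the descent lemma entirely. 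It introduces the auxiliary $x^k\in x_\nu(w^k)$, writes $g_\nu(w^{k+1})=\tfrac{1}{2\nu}\|Ax^{k+1}-w^{k+1}\|^2+g(x^{k+1})$ explicitly, expands $Ax^{k+1}=Ax^k+A(x^{k+1}-x^k)$, and uses convexity of the \emph{original} $g$ to obtain $\Delta_w\ge\tfrac{1}{2\nu}\|A(x^k-x^{k+1})\|^2$. Since the certificate is exactly $s=\tfrac{1}{\nu}A(x^k-x^{k+1})$ --- which equals your $-\tfrac{1}{\nu}(w^{k+1}-w^k)+\nabla g_\nu(w^{k+1})-\nabla g_\nu(w^k)$, as one checks from $Ax^j=w^j-\nu\nabla g_\nu(w^j)$ --- this yields $\tfrac{\nu}{2}\|s\|^2\le\Delta_w$ with no slack.

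On the $v^k$ versus $v^{k+1}$ coefficient mismatch in the $w$-certificate: you are right to flag it, but the paper does not carry out any correction. It records $s\in\cS_w^t(w^{k+1},v^k)$ and then bounds $T_\nu^t(w^{k+1},v^{k+1})$ by $\tfrac{\nu}{2}\|s\|^2+\alpha\|r\|^2$ without further comment, effectively treating the certificate as if it lived in $\cS_w^t(w^{k+1},v^{k+1})$. So the ``delicate part'' you identify is a gap in the paper's own argument rather than an extra hurdle you must clear to reproduce it; the fix you sketch would additionally require a bound on the $h_i$-subgradients that is nowhere assumed.
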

\begin{proof}
The proof is given in Appendix~\ref{sec:appendix}.
\end{proof}
\section{ Numerical Comparisons, Continuation, and Inexact Strategies.}
\label{sec:practical}
In this section we provide 
numerical experiments that help to better understand Algorithm~\ref{alg:pg_w}. 
In Section~\ref{subsec:admm}, we compare with the Alternating Directions Method of Multipliers (ADMM) in the convex setting. 
The iterations of ADMM are similar to those of Algorithm~\ref{alg:pg_w}, with the augmented Lagrangian parameter $\rho$ in ADMM analogous 
to the relaxation parameter $\frac{1}{\nu}$ for RS. However,  ADMM performs worse than RS in a direct comparison:
it needs a larger number of iterations to achieve a specified error tolerance  across choices of $\rho$ and $\nu$, and RS can achieve 
better practical performance, depending on the application. 
In Section~\ref{sec:continuation}, we discuss continuation strategies in $\nu$, that become important when RS is used iteratively to approximate the original problem~\eqref{eq:class}. 
Finally, in Section~\ref{sec:inexact} we consider large-scale problems where problem \eqref{eq:partial_g} cannot be solved in closed form, and iterative methods are required.

\subsection{Comparison to ADMM in the Convex Setting}
\label{subsec:admm}
Although Algorithm~\ref{alg:pg_w} bears a strong resemblance to the ADMM algorithm (Algorithm~\ref{alg:admm}, see e.g. \cite{boyd2011distributed}), 
they are fundamentally different:
\begin{itemize}
\item ADMM is a primal-dual method solving \eqref{eq:class} while Algorithm~\ref{alg:pg_w} is a primal-only approach for solving the relaxation \eqref{eq:relax}.
\item ADMM has convergence guarantees for convex objectives\footnote{Convergence for nonconvex problems requires additional assumptions, see e.g.~\cite{wang2015global}}, 
while Algorithm~\ref{alg:pg_w} is provably convergent both convex and nonconvex optimization problems.
\end{itemize}

\begin{algorithm}[h!]
\caption{ADMM  for convex $h(Ax) + g(x)$}
\label{alg:admm}
\begin{algorithmic}[1]
\Require{$x^0$, $\rho$, $\alpha$}
\Statex{Initialize: $k=0$, $w^0$, $u^0$}
\While{not converge}
\Let{$x^{k+1}$}{$\argmin_{x} g(x) + \ip{u^k, Ax - w^k} + \frac{\rho}{2}\|Ax - w^k\|^2$}
\Let{$w^{k+1}$}{$\prox_{h/\rho}(Ax^{k+1} - u^k/\rho)$}
\Let{$u^{k+1}$}{$u^k - \alpha(Ax^{k+1} - w^{k+1})$}
\Let{$k$}{$k+1$}
\EndWhile
\Ensure{$w^k$}
\end{algorithmic}
\end{algorithm}

We compare the two algorithms on a simple objective. 
\begin{example}
\label{ex:LAD}
Consider $\ell_1$ linear regression,
\begin{equation}
\label{eq:lad}
\min_{x} \|Ax - b\|_1.
\end{equation}
The quadratic relaxation~\eqref{eq:class} is given by 
\begin{equation}
\label{eq:smooth_lad}
\min_{x,w} \|w - b\|_1 + \frac{1}{2\nu}\|Ax - w\|^2.
\end{equation}
Here $A\in\R^{m \times n}$ and
$x_t\in\R^n$ are generated from standard Gaussian distribution,
and $b = Ax_t + \epsilon + o$ with $\epsilon$ to be random Gaussian noise,
and $o$ to be sparse outliers.
We denote the solution to \eqref{eq:lad} as $x_{\ell_1}$ and the solution to \eqref{eq:smooth_lad} as $x_\nu$.
\end{example}

\begin{figure}[h]
\centering
\includegraphics[width=0.45\textwidth]{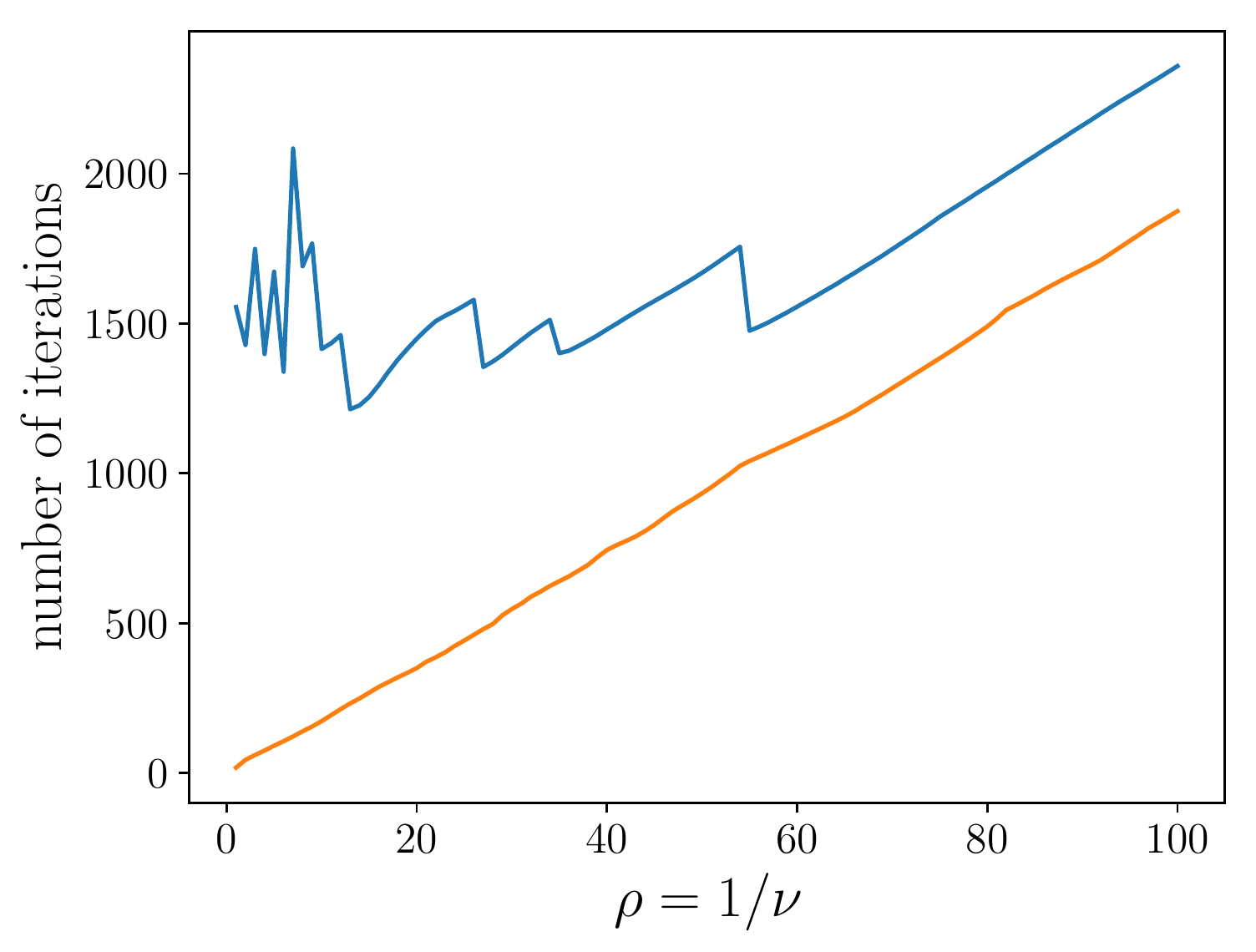}
\includegraphics[width=0.45\textwidth]{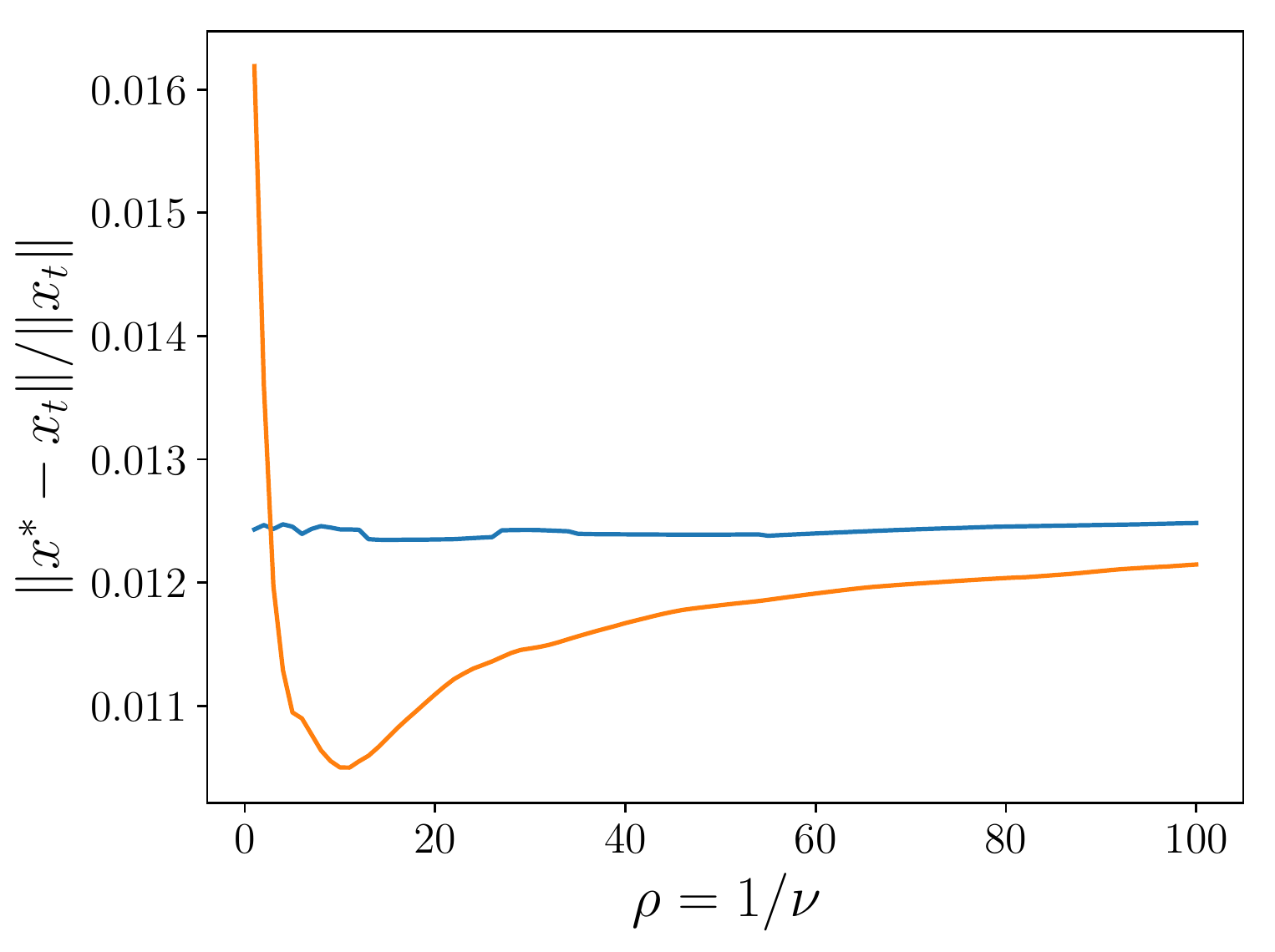}
\caption{Comparison between Algorithm~\ref{alg:pg_w} and ADMM. Left: number of iterations required by
ADMM (blue) and Algorithm~\ref{alg:pg_w} (orange) to converge to a fixed tolerance, as a function of varying $\rho = 1/\nu$.
Right: relative error of the solution obtained from ADMM (blue) and Algorithm~\ref{alg:pg_w} (orange) with respect to $x_t$.}
\label{fig:admm_vs_ours}
\end{figure}

The numerical results are shown in Figure~\ref{fig:admm_vs_ours}. In the experiments, we fix the augmented Lagrangian coefficient $\rho$ in ADMM to be
equal to $1/\nu$, and this quantity from 1 to 100. We then plot the number of iterations required to hit a specified error tolerance, 
as well as the relative error of the recovered solution with respect to $x_t$.%

As shown in the left plot of Figure~\ref{fig:admm_vs_ours}, the number of iterations of Algorithm~\ref{alg:pg_w} grows linearly
as a function of $1/\nu$, but is always below the number required by ADMM. The right figure of Figure~\ref{fig:admm_vs_ours} tells an interesting
story. The relaxation may be {\it more accurate} than the original problem, depending on the application. When $\rho=1/\nu=10$, the solution 
of the relaxed formulation~\eqref{eq:smooth_lad} is closer to the true model that that of~\eqref{eq:lad}, and 
Algorithm~\ref{alg:pg_w} can solve~\eqref{eq:smooth_lad} much faster than ADMM can solve~\eqref{eq:lad}. Both the improvement in accuracy 
and the computational advantage persist as $\nu \downarrow 0$. In this problem, ADMM and RS iterations have exactly the same complexity, so 
the iterations comparison tells the full story. 


\subsection{Continuation}
\label{sec:continuation}
%
In the previous section, the solution obtained from the relaxed objective was closer to the true model. 
In other cases, such as noiseless phase retrieval, \eqref{eq:relax} and \eqref{eq:class} can share a minimizer at a large value of $\nu$.
However, more generally we may want to use~\eqref{eq:relax} as an approximation to~\eqref{eq:class}, in which case 
we want to explore continuation schemes with $\nu \downarrow 0$.

\begin{theorem}
\label{th:app_qual}
If $h$ is $L$-Lipschitz continuous and $(\bar x, \bar w)$ is a stationary point of \eqref{eq:relax}, we have,
\[
\|A \bar x - \bar w\| \le L \nu.
\]
Moreover, when $A \bar x = \bar w$, we know $\bar x$ is also a stationary point of \eqref{eq:class}.
\end{theorem}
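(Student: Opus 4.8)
The plan is to read off the first-order stationarity conditions of the relaxed problem \eqref{eq:relax} and to exploit that the only coupling between $x$ and $w$ is through the smooth quadratic $\frac{1}{2\nu}\|Ax-w\|^2$. Because this coupling term is $C^1$ and $h(w)+g(x)$ is separable across the two blocks, the limiting subdifferential of the joint objective splits exactly, so stationarity of $(\bar x,\bar w)$ is equivalent to the pair of inclusions
\[
0 \in \partial g(\bar x) + \tfrac{1}{\nu}A^\top(A\bar x - \bar w), \qquad 0 \in \partial h(\bar w) + \tfrac{1}{\nu}(\bar w - A\bar x).
\]
I would first isolate the $w$-condition, which rearranges to $\tfrac{1}{\nu}(A\bar x - \bar w) \in \partial h(\bar w)$.

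For the norm bound I would invoke the standard fact that every element of the (Fr\'echet, hence limiting) subdifferential of an $L$-Lipschitz function has norm at most $L$: if $v \in \hat\partial h(\bar w)$, testing the subgradient inequality along $\bar w + t v/\|v\|$ and using $|h(\bar w + t v/\|v\|) - h(\bar w)| \le Lt$ forces $\|v\| \le L$, and limiting subgradients inherit the bound as limits. Applying this to $v = \tfrac{1}{\nu}(A\bar x - \bar w) \in \partial h(\bar w)$ gives $\tfrac{1}{\nu}\|A\bar x - \bar w\| \le L$, which is exactly $\|A\bar x - \bar w\| \le L\nu$.

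For the second claim I would specialize to $A\bar x = \bar w$. The $w$-condition then collapses to $0 \in \partial h(\bar w) = \partial h(A\bar x)$, and the $x$-condition to $0 \in \partial g(\bar x)$. It remains to assemble these into stationarity of $f = h\circ A + g$. Since $0 \in \partial h(A\bar x)$ gives $0 = A^\top 0 \in A^\top \partial h(A\bar x)$, I would conclude $0 \in A^\top \partial h(A\bar x) + \partial g(\bar x)$, the natural stationarity condition for \eqref{eq:class}. A self-contained route avoiding general calculus rules is to note that convexity of $g$ upgrades $0 \in \partial g(\bar x)$ to the global bound $g(x) \ge g(\bar x)$, while a Fr\'echet subgradient $0 \in \hat\partial h(A\bar x)$ yields $h(Ax) \ge h(A\bar x) + o(\|A(x-\bar x)\|) = h(A\bar x) + o(\|x-\bar x\|)$; adding the two gives $f(x) \ge f(\bar x) + o(\|x-\bar x\|)$, i.e. $0 \in \hat\partial f(\bar x)$.

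The main obstacle is the subdifferential bookkeeping for the nonconvex $h$: stationarity of \eqref{eq:relax} delivers $0$ in the \emph{limiting} subdifferential $\partial h(A\bar x)$, whereas the clean forward sum/chain rule that lets Fr\'echet subgradients add up needs $0 \in \hat\partial h(A\bar x)$, and these two sets can differ at a nonconvex point. I would resolve this by either (i) adopting the separated stationarity condition $0 \in A^\top\partial h(A\bar x) + \partial g(\bar x)$ for \eqref{eq:class}, mirroring the limiting-subdifferential convention already used for \eqref{eq:obj_w}, in which case the conclusion is immediate from $0 \in \partial h(A\bar x)$ and $0 \in \partial g(\bar x)$; or (ii) invoking the exact sum rule for a smooth perturbation together with the chain rule $\partial(h\circ A)(\bar x) = A^\top \partial h(A\bar x)$, which is valid here because the Lipschitz property of $h$ makes the attendant qualification condition automatic. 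Either way the two inclusions combine to the desired stationarity; the only real care is in stating precisely which stationarity notion is used and in checking the qualification for the chain rule.
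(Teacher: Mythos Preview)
The paper states this theorem without proof (it appears in Section~\ref{sec:continuation} with no argument, and no proof is supplied in Appendix~\ref{sec:appendix}), so there is no paper proof to compare against. Your argument is correct and is exactly the natural one: the smooth quadratic coupling makes the joint stationarity condition split into the two block inclusions, the $w$-inclusion rearranges to $\tfrac{1}{\nu}(A\bar x-\bar w)\in\partial h(\bar w)$, and the standard fact that limiting subgradients of an $L$-Lipschitz function have norm at most $L$ yields the bound. For the second claim your reduction to $0\in\partial h(A\bar x)$ and $0\in\partial g(\bar x)$ and then to $0\in A^\top\partial h(A\bar x)+\partial g(\bar x)$ is the right move; since $g$ is convex and $h$ is finite-valued Lipschitz, the sum and chain rules hold without any qualification condition, so your option~(ii) goes through cleanly and the extended discussion of Fr\'echet versus limiting subgradients, while careful, is more than is needed here.
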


From Theorem~\ref{th:app_qual} we know that, as $\nu$ goes to 0, solutions of \eqref{eq:relax} approach the
solution set of \eqref{eq:class}.
This yields a simple continuation strategy.
Using the setting of Example~\ref{ex:LAD}, we take a 
decreasing positive sequence $\{\nu^k\}$, and initialize $x_{\nu^k}$ at the previous solution $x_{\nu^{k-1}}$.

We generate $A$ at different dimensions $m \in\{ 500, 1000, 2000, 5000, 10000\}$ with $n = 200$, and compare the results
from the continuation strategy of Algorithm~\ref{alg:pg_w} continuation with the Julia Convex Package (which uses the splitting cone solver (SCS)). 
We check the final objective for~\eqref{eq:lad}, as well as the run times. Results are shown at Figure~\ref{fig:continuation}.

\begin{figure}[h]
\centering
\includegraphics[width=0.47\textwidth]{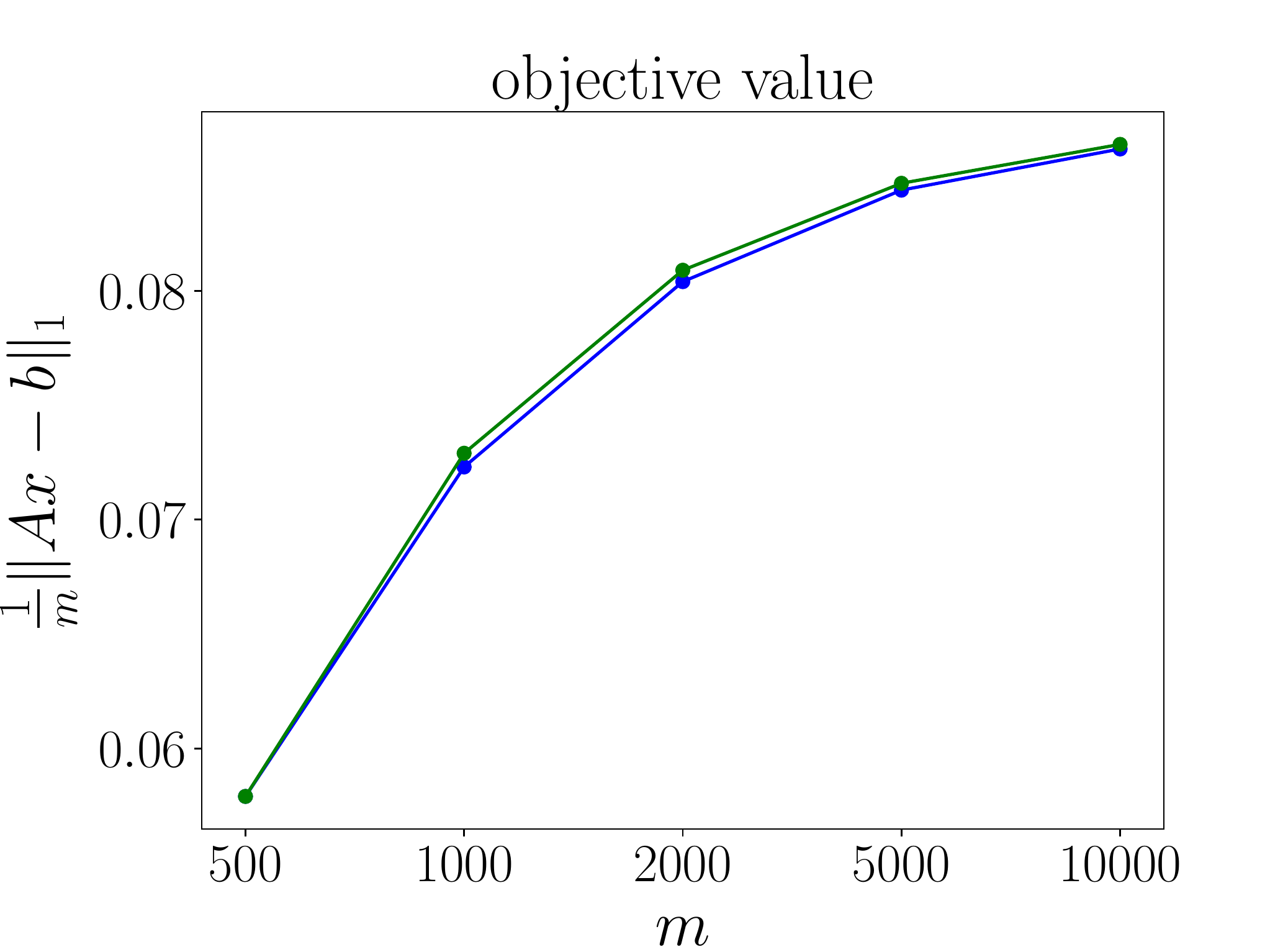}
\includegraphics[width=0.47\textwidth]{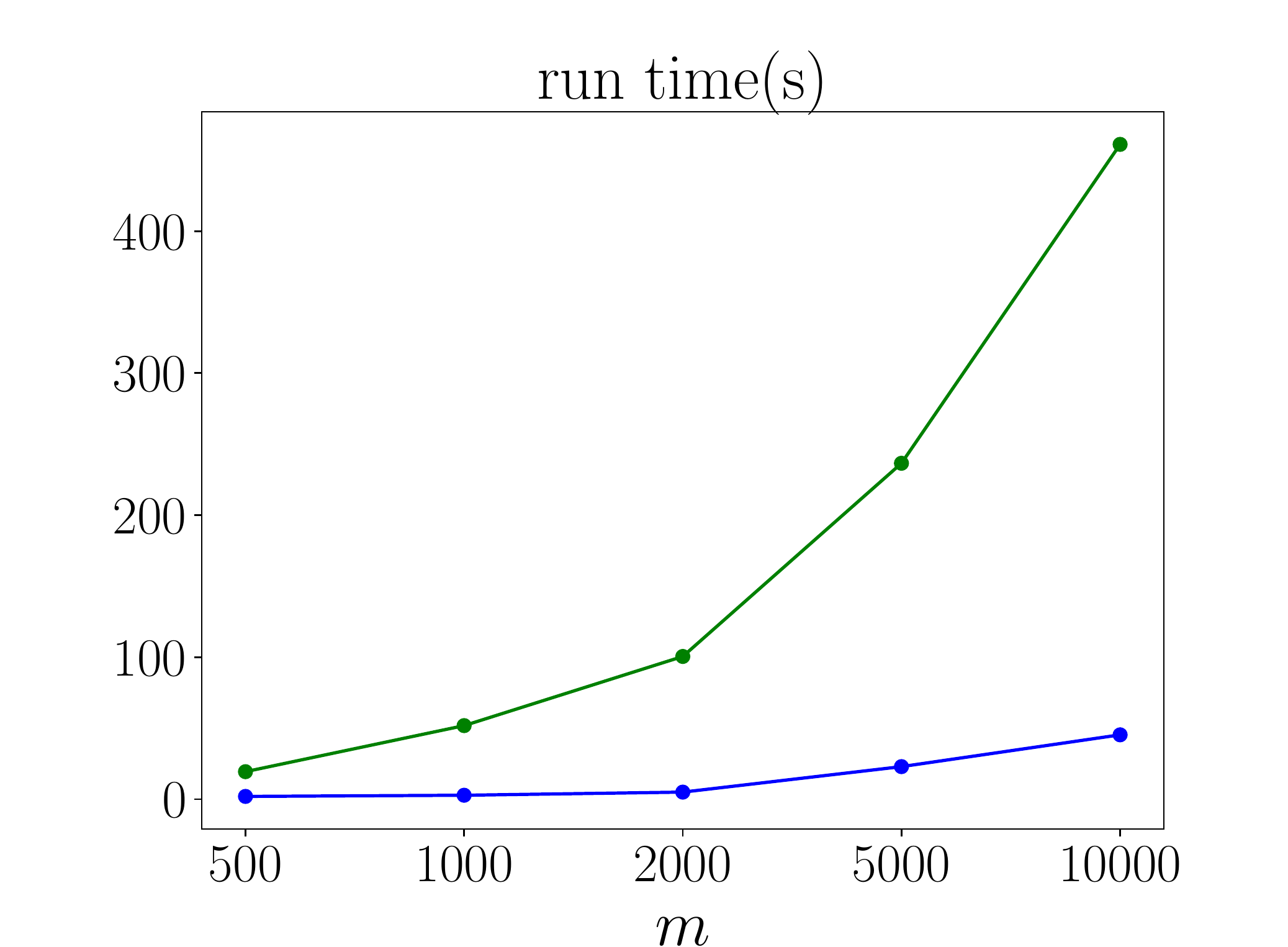}
\caption{\label{fig:continuation} Comparison between Algorithm~\ref{alg:pg_w} continuation and the Julia Convex package with SCS. 
Left: objective values for~\eqref{eq:lad} Algorithm~\ref{alg:pg_w} (blue) and SCS (green) as a function of $m$; the continuation approach 
finds the same or lower objective value as SCS. 
Right: run times for Algorithm~\ref{alg:pg_w} (blue) with SCS (green) as a function of $m$. The total work of the continuation approach is 
far less than required by SCS as $m$ increases.}
\end{figure}

Algorithm~\ref{alg:pg_w} gets a \underline{slightly lower objective value} than the SCS algorithm; 
it is also far faster in terms of run-time, as shown in Figure~\ref{fig:continuation}. We emphasize that here
SCS and Algorithm~\ref{alg:pg_w} are solving the same objective~\eqref{eq:lad}, since we drive $\nu \downarrow 0$
using a continuation strategy. 

\subsection{Inexact Solutions}
\label{sec:inexact}
%
%
Each iteration of Algorithm~\ref{alg:pg_w} requires solving a linear system.
The potential drawback of Algorithm~\ref{alg:pg_w} is the computational cost for problem \eqref{eq:partial_g}, especially for large scale problems.
In many imaging applications, $A$ is an orthogonal operator, like the Fourier transform, Wavelet transform or Hadamard matrix;
as a result, problem \eqref{eq:partial_g} in Algorithm~\ref{alg:pg_w} is tractable at acale.
In more general applications, when the matrix $A$ is of moderate size, $A^\top A + \frac{1}{\nu} I$ can be pre-factored, and the factors used to solve \eqref{eq:partial_g}.
However, for large-scale systems $A$ may only be accessible through matrix-vector multiplication,
and inexact solves of \eqref{eq:partial_g} are required to make Algorithm~\ref{alg:pg_w} practical.
Again using the setup in Example~\ref{ex:LAD}, we consider iterative methods, including pre-conditioned CG~\citep{hestenes1952methods} and LSQR~\citep{paige1982lsqr} 
to solve the problem for large $n$.

\begin{table}[h!]
\centering
\begin{tabular}[h]{c || c | c | c }
CondNum &  Alg. \ref{alg:pg_w} iters & Total BFGS  & Alg. \ref{alg:pg_w} time(s)\\ \hline\hline
$1$ &12 & 12 & 0.74\\ \hline
$10$ & 15 & 1099 & 18.28\\ \hline
$20$ & 20 & 1040 & 18.65\\ \hline
$50$  &35 & 1054 & 22.87\\ \hline
$100$ &60 & 1104 & 32.28
\end{tabular}
\caption{\label{tbl:lad} Iterations and run times for Alg. \ref{alg:pg_w} with BFGS 
solving \eqref{eq:partial_g}. As the condition number grows, the total number of BFGS iterations 
used by Alg.~\ref{alg:pg_w} stays bounded. 
}
\end{table}


In this experiment, we choose $m = 5000$, $d = 1000$, $\nu=1$ and generate random matrices $A$ with different condition numbers.
We use BFGS (see e.g.~\citep{fletcher2013practical}) as the inner solver for \eqref{eq:partial_g}. 
As the condition number increases, Algorithm~\ref{alg:pg_w} behaves quite well in the large-scale setting, 
as the total number of inner iterations stays bounded.

	\section{Machine Learning Applications}
\label{sec:numerics}

In this section, we give more detailed explanations for the motivating examples, and present numerical experiments
and results. Phase Retrieval and its trimmed variant is presented in Section~\ref{sec:phase}. 
Semi-supervised classification is considered in Section~\ref{sec:kernel}. The stochastic shortest path problem is developed in Section~\ref{sec:shorty}. 
New approaches for convex and nonconvex clustering are discussed in Section~\ref{sec:clustering}.

\subsection{Sharp Phase Retrieval}
\label{sec:phase}
Phase retrieval was originally introduced in signal processing for the X-ray crystallography problem~\cite{harrison1993phase,
millane1990phase} and arises in such diverse fields as microscopy \citep{miao1999extending, frank200018, 
drenth1975problem}, holography \citep{fienup1980iterative, szoke1997holographic}, neutron radiography
\citep{allman2000imaging},
optical design \citep{farn1991new}, adaptive optics, and astronomy. For a detailed review of applications and algorithms, see the survey of~\cite{luke2002optical}.

Many algorithms has been studied by \cite{fienup1978reconstruction, fienup1982phase, gerchberg1972practical}. Recently, phase retrieval has gained some attention with the work of \cite{candes2015phase, duchi2017solving, eldar2014phase} and \cite{davis2017nonsmooth}.


We consider an exact formulation of phase retrieval problem,
\begin{equation}
\label{eq:robust_ph}
\min_{x} \||Ax| - b\|_1
\end{equation}
where $x$ is the signal we want to recover, $|\cdot|$ is the modulus of a complex number, and $b$ are the observed moduli obtained from linear observations $A$ of the true signal. 
We take $h_i(z) = ||z| - b_i|$, $g(x)=0$ and optimize
\begin{equation}
\label{eq:smooth_ph}
\min_{x,w} \||w| - b\|_1 + \frac{1}{2\nu}\|Ax - w\|^2.
\end{equation}
We assume there is no noise in the experiment, so that
$
b = |Ax^*|.
$
In this case, \eqref{eq:smooth_ph} and \eqref{eq:robust_ph} share the same solution.

We  test Algorithm~\ref{alg:pg_w} on a large scale phase retrieval problem.
We use a color image\footnote{\scriptsize\url{http://getwallpapers.com/wallpaper/full/8/5/0/651422.jpg}}
that is $2048\times2048$, with  
$m = 9\times2^{22}$ observations and $n = 3\times2^{22}$ unknowns.
We define $H_n$ to be a normalized Walsh-Hadamard transform:
\[
H_n \in\{-1, 1\}^{n\times n}/\sqrt{n}, \quad H_n = H_n^\top, \quad H_n^2 = I.
\]
The linear operator $A$ is given by
\[
A = \begin{bmatrix}
H_n S_1 \\
\vdots\\
H_n S_k \end{bmatrix} \in \R^{kn\times n},
\]
with $k = 4$ and $S_1, \ldots, S_k \in \textrm{diag}(\{-1, 1\}^n)$.

\begin{figure}[h]
\centering
\includegraphics[width=0.47\textwidth]{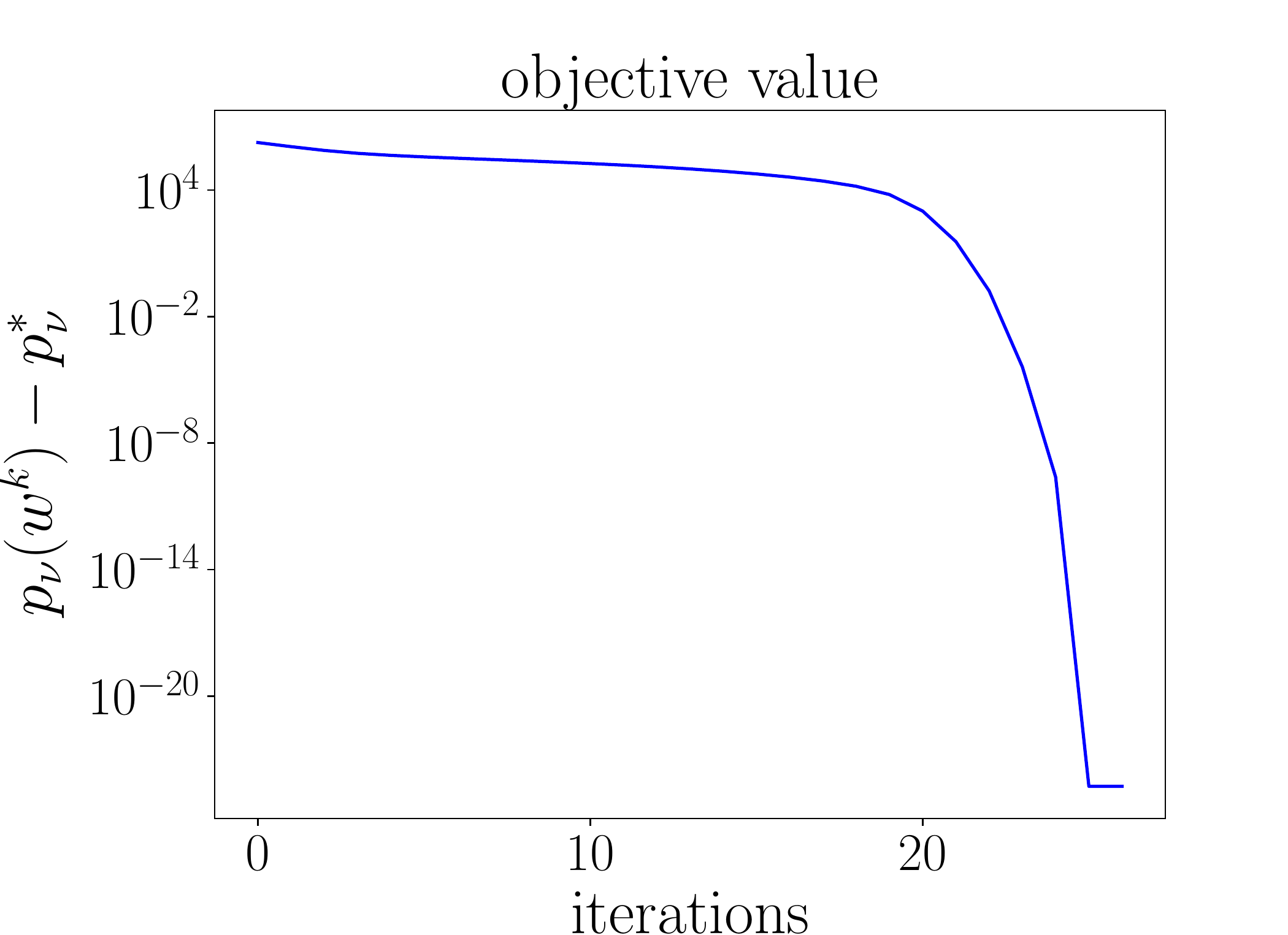}
\includegraphics[width=0.47\textwidth]{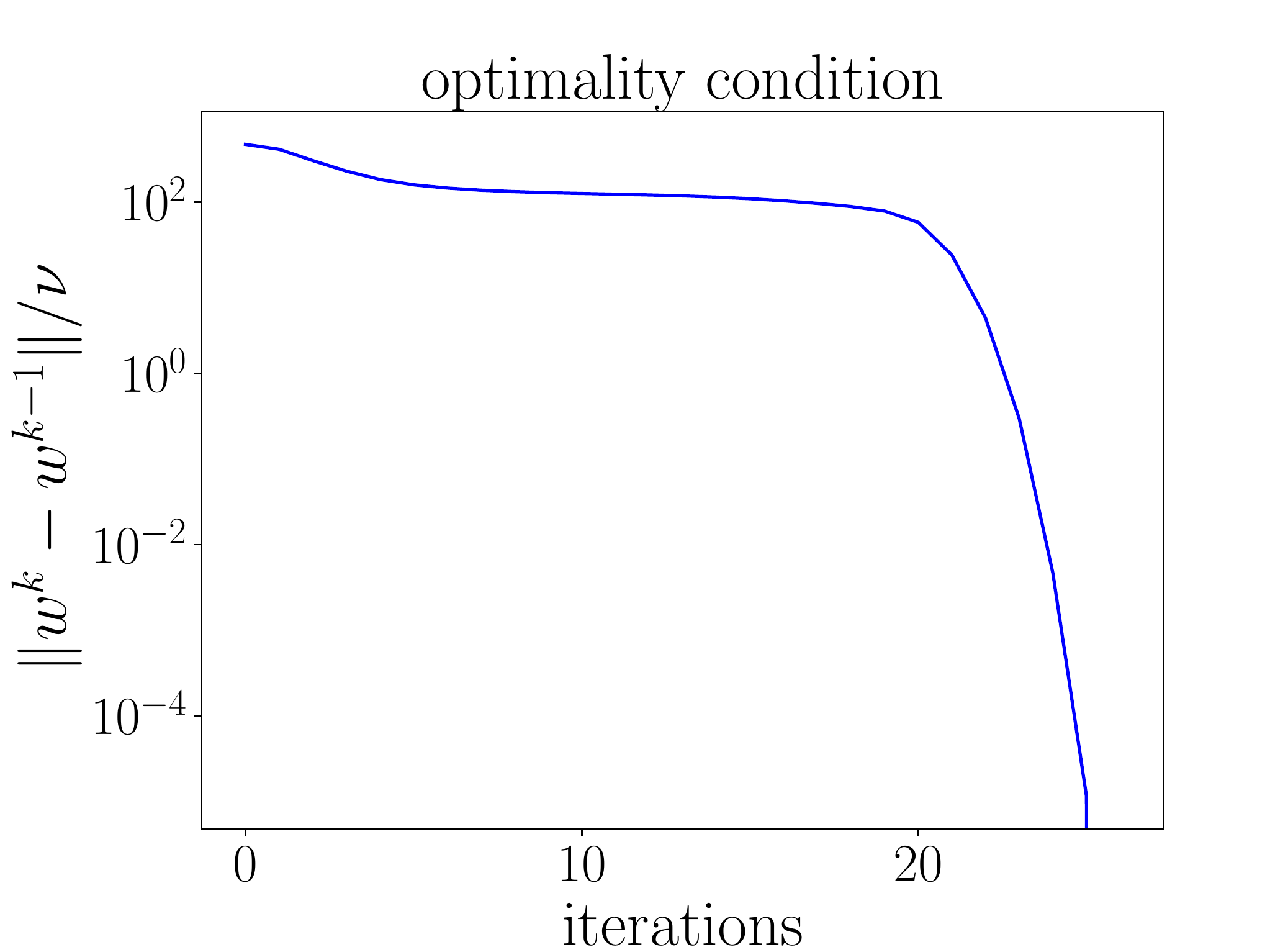}
\caption{\label{fig:lpic_con}Convergence history for large-scale phase retrieval.}
\end{figure}

\begin{figure*}[h]
\centering
\includegraphics[width=0.3\textwidth]{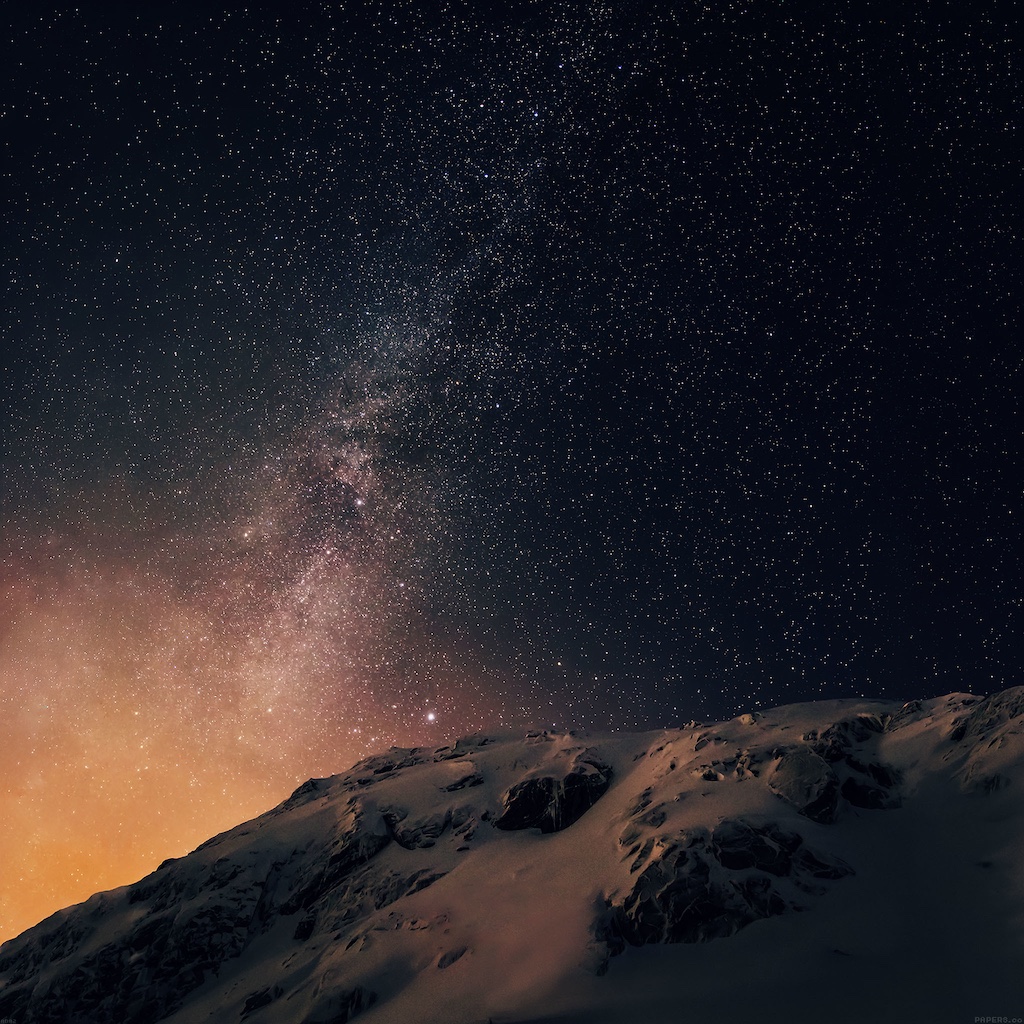}
\includegraphics[width=0.3\textwidth]{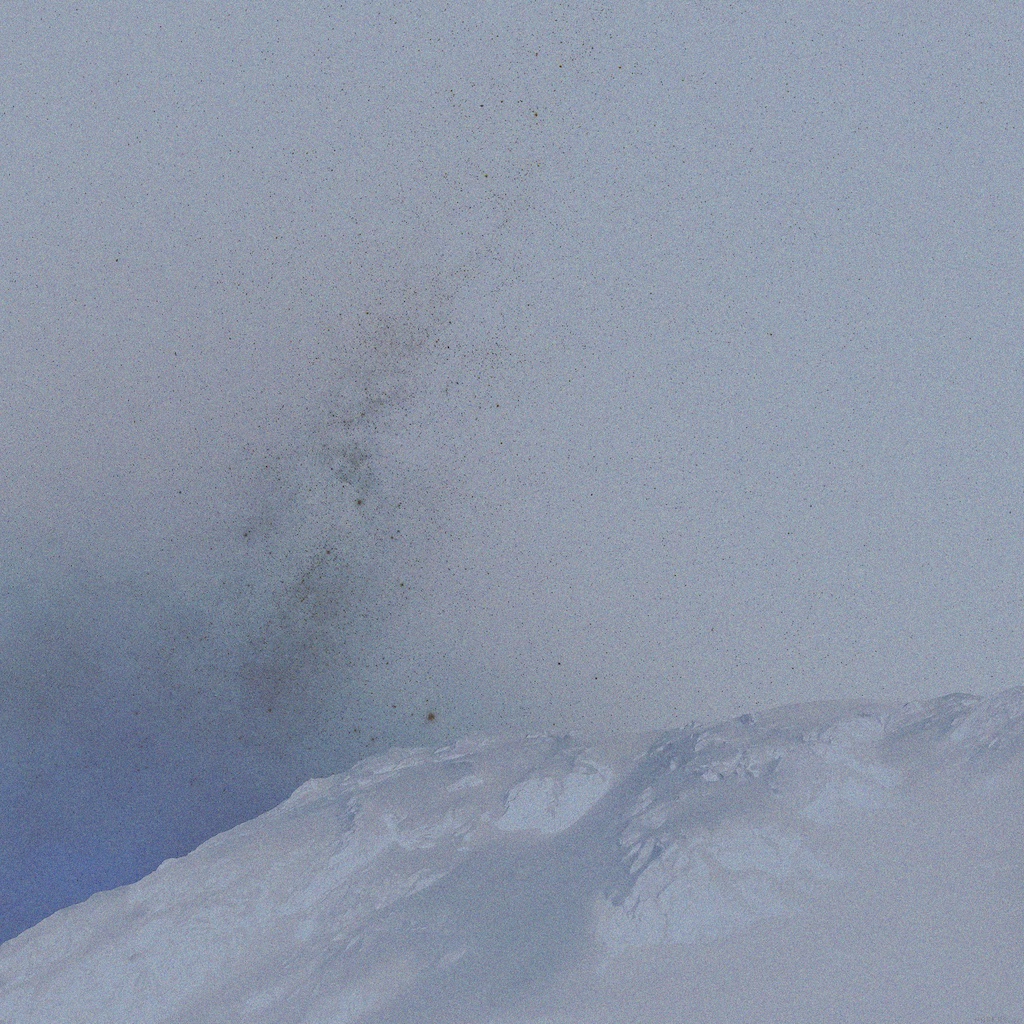}
\includegraphics[width=0.3\textwidth]{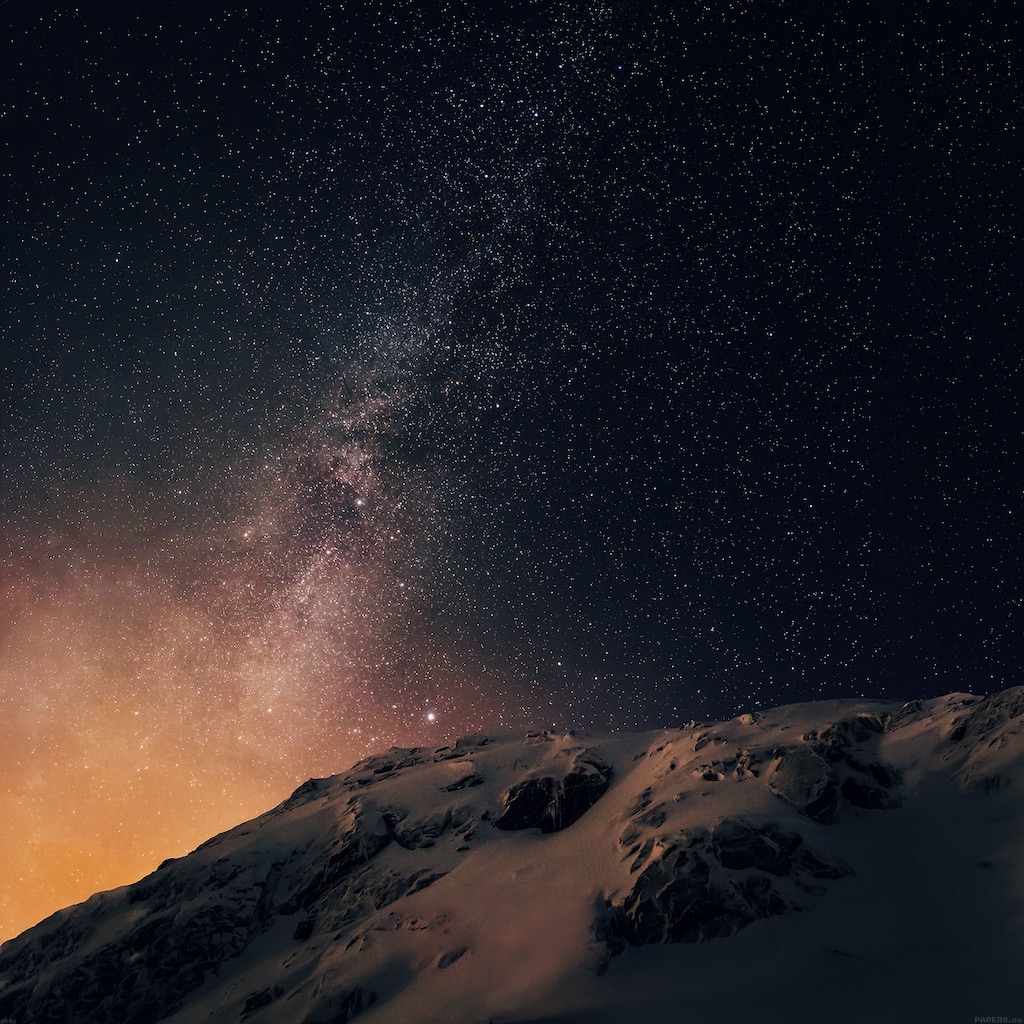}
\caption{\label{fig:lpic_fig} Large example ($d = 3\times 2^{22}, m = 3 \times 2^{22}$). 
Original picture (left), initial point (middle), and final result (right).}
\end{figure*}

The results are shown in Figures~\ref{fig:lpic_con} and \ref{fig:lpic_fig}. 
The initialization algorithm works
well, and Algorithm~\ref{alg:pg_w} converges within 30 iterations.
Even though hypotheses of Theorem~\ref{th:sharp} do not hold ($h$ is nonconvex), 
we expect a local quadratic rate of convergence since the minimum is sharp, 
and we observe this rate Figure~\ref{fig:lpic_con}. 

\paragraph{Comparison to State-of-the-Art Phase Retrieval Algorithms.}
We compare Algorithm~\ref{alg:pg_w} with other methods developed by
\cite{duchi2017solving} and by \cite{davis2017nonsmooth}. We summarize the results in Table~\ref{tbl:compare}.

\begin{table}[h]
\centering
\begin{tabular}{c || c | c | c | c | c}
 & objective & picture size & dimension & \# meas & \# FHT\\\hline\hline
Algorithm~\ref{alg:pg_w} & $\||Ax| - b\|_1$ & $2048^2$ & $n=3\times2^{22}$ & $m=3n$ & 518\\\hline
\cite{duchi2017solving} & $\|(Ax)^2 - b\|_1$ & $1024^2$ & $n=3\times2^{20}$ & $m=3n$ & 15100\\\hline
\cite{davis2017nonsmooth} & $\|(Ax)^2 - b\|_1$ & $2048^2$ & $n=3\times2^{22}$ & $m=3n$ & 1530
\end{tabular}
\caption{\label{tbl:compare} Comparison summary. FHT stands for fast Hadamard transfrom.
The number of FHTs include those used during initialization.}
\end{table}

Algorithm~\ref{alg:pg_w} uses fewer matrix vector multiplications
(fast Hadamard transforms) to obtain the solution, compared to recently developed phase retrieval algorithms.
The counts include initialization, with Algorithm~\ref{alg:pg_w} using 10 power iterations to initialize, while 
\cite{davis2017nonsmooth} start at random point. For this problem, Algorithm~\ref{alg:pg_w} 
is minimizing a different objective than the other methods, see Table~\ref{tbl:compare}. 
However, we can compare the Hadamard counts directly since all methods recover the true phase.

\paragraph{Trimmed Phase Retrieval.}
The measurements of the magnitude can be corrupted due to detector malfunction, heteroscedastic noise, or physical limitations.
A robust extension of phase retrieval is needed in these situations. We use the trimmed extension of~\eqref{eq:robust_ph}:
\begin{equation}
\label{eq:robust_ph_t}
\min_{v,x} \sum_{i=1}^m v_i ||\ip{a_i,x}| - b_i|,  \quad \mbox{s.t.}~v \in \triangle_\tau, 
\end{equation}
where $\tau$ indicates the estimated number of good measurements. This is a nonsmooth trimming problem, and we use 
TRS, see Section~\ref{sec:trim_anal}.
The relaxed trimmed phase retrieval objective is given by
\begin{equation}
\label{eq:trim_ph}
\min_{w,v,x}~\frac{1}{2}\sum_{i=1}^m v_i||w_i| - b_i|^2 + \frac{1}{2\nu}\|A x - w\|^2, \quad \mbox{s.t.}~v \in \triangle_\tau.
\end{equation}
In the experiments, we use a small MNIST\footnote{\url{http://yann.lecun.com/exdb/mnist/}} picture as the data source with dimension $n = 28 \times 28 = 784$.
We take $m=5n$, measurements, and corrupt $30\%$ of them by replacing the measurements with large scalar $1000$.
We then solve both \eqref{eq:smooth_ph} and \eqref{eq:trim_ph}.
Trimming makes a significant difference in the quality of the recovered image, see Figure~\ref{fig:mnist}.
\begin{figure}[h]
\centering
\begin{tabular}{cccc}
\includegraphics[width=0.225\textwidth]{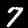}
&\includegraphics[width=0.225\textwidth]{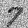}
&\includegraphics[width=0.225\textwidth]{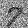}
&\includegraphics[width=0.225\textwidth]{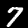}\\
(a) & (b) &(c) & (d)
\end{tabular}
\caption{The advantages of trimming phase retrieval: (a) is the true data source, (b) is the initial starting point, (c)  phase retrieval results using~\eqref{eq:smooth_ph}, 
(d) trimmed phase retrieval results using \eqref{eq:trim_ph}.}
\label{fig:mnist}
\end{figure}

\subsection{Semi-Supervised Classification}
\label{sec:kernel}

Classification is a fundamental problem in machine learning.
Logistic Regression (\cite{mccullagh1989generalized}) and Support Vector Machines (SVMs, see \cite{cortes1995support}) are used widely for binary classification; training requires labeled examples.
In many applications, labeling the data can a slow, costly and error-prone process.
%
%
Semi-supervised learning attempts to use both labeled and unlabeled data to improve accuracy (relative to using only labeled data).  
Logistic regression for binary classification is both easily formulated and widely used. 
We consider the semi-supervised logistic regression (SSLR). 

Building on early work for semi-supervised classification in the pattern recognition community (see survey in~\cite{mclachlan2004discriminant}), 
\cite{amini2002semi} proposed a variant of SSLR, 
building a discriminant logistic model and using a Classification Expectation Maximization (CEM) algorithm to solve the resulting formulation.
The work of~\cite{amini2002semi} and follow-up papers (e.g.~\cite{madani2005co}) share a key theme: 
they estimate posterior probabilities of class labels, which are then used in the maximization step. 
The idea of taking expectations over class labels brings the Expectation-Maximization (EM) algorithm to bear on the model. 

Our approach to semi-supervised logistic regression is inspired by transductive SVMs, introduced by  
\cite{vapnik1977structural}. 
The more modern variant of the problem is often called 
the semi-supervised SVM (S$^3$VM), see e.g. work of~\cite{chapelle2008optimization}. 

Following the intuition of transductive SVMs, we want to solve the logistic regression problem while separating unlabeled data as well as possible, regardless of the label. 
This leads to an intuitively simple nonsmooth, nonconvex problem
\begin{equation}
\label{eq:sslr}
\min_{x}~\sum_{i=1}^l \log(1 + \exp(-b_i \ip{a_i, x})) + \gamma \sum_{i=l+1}^m \log(1 + \exp(-|\ip{a_i,x}|)) + \frac{\lambda}{2}\|x\|^2,
\end{equation}
where $a_i\in\R^n$ is the data image, $b_i\in\{-1, 1\}$ is the label and $\gamma$ controls the weight of the semi supervise part.
Without loss of generality, we assume only the first $l$ images are labeled.
Geometrically, when data is labeled, the direction to push the classifier is determined; when data is unlabeled, we tend to push the classifier in both ways
depend on its current position.
Problem \eqref{eq:sslr} is different from all previous SSLR formulations, and in particular does not require an EM algorithm; it can be optimized directly. 
Problem~\eqref{eq:sslr} falls squarely into the framework we proposed in this paper, and the relaxed objective can be written as,
\begin{equation}
\label{eq:relax_sslr}
\min_{x,w}~\sum_{i=1}^l \log(1 + \exp(-b_i w_i)) + \gamma \sum_{i=l+1}^m \log(1 + \exp(-|w_i|)) + \frac{1}{2\nu}\|Ax - w\|^2 + \frac{\lambda}{2}\|x\|^2.
\end{equation}
If we treat \eqref{eq:relax_sslr} as a specification of \eqref{eq:relax} we have,
\[
g(x) = \frac{\lambda}{2}\|x\|^2, \quad
h_i(z) = \begin{cases}
\log(1 + \exp(-b_i z)), & i \le l\\
\log(1 + \exp(-|z|)), & i > l
\end{cases},
\]
and when $i > l$ we know that $h_i$ is nonconvex and nonsmooth.
To apply Algorithm~\ref{alg:pg_w}, closed form solution of \eqref{eq:partial_g} can be obtained.
We also need to calculate the proximal operator of $h_i$.
For $i\le l$, the prox-subproblems is smooth and convex. For $i > l$, i.e. for the unlabeled examples, 
the prox problem in each coordinate requires solving the scalar problem,
\[
\min_{w_i}~\frac{1}{2\nu}(w_i - \overline w_i)^2 + \gamma\log(1 + \exp(-|w_i|)).
\]
The optimal $z$ will necessarily have the same sign as $\overline z$, and so we can rewrite the problem 
\[
\min_{|w_i|}~\frac{1}{2\nu}(|w_i| - |\overline w_i|))^2 + \gamma\log(1 + \exp(-|w_i|)).
\]
This is again a smooth and convex problem in $w$, so we can apply Newton's method to find $|\hat w_i|$. 
The solution $\hat w_i$ is then immediately obtained by $\hat w_i = |\hat w_i|\; \sign(\overline w_i)$.

Our goal in the experimental results is to illustrate the simplicity and flexibility of the new SSLR concept. 
We leave a comprehensive comparison with prior art on semi-supervised classification to future work.

Figure~\ref{fig:sslr_con_his} shows the convergence result for run of the algorithm, 
with parameters $m = 12665$, $l = 254$ ($2\%$ of data labeled), 
$\lambda = 0.1$, $\gamma = 0.1$ and $\nu = 1$.
Consistently with Theorem~\ref{th:pg_w}, when $h$ is nonconvex, Algorithm~\ref{alg:pg_w} has a sublinear rate.
\begin{figure}[h]
\centering
\includegraphics[width=0.47\textwidth]{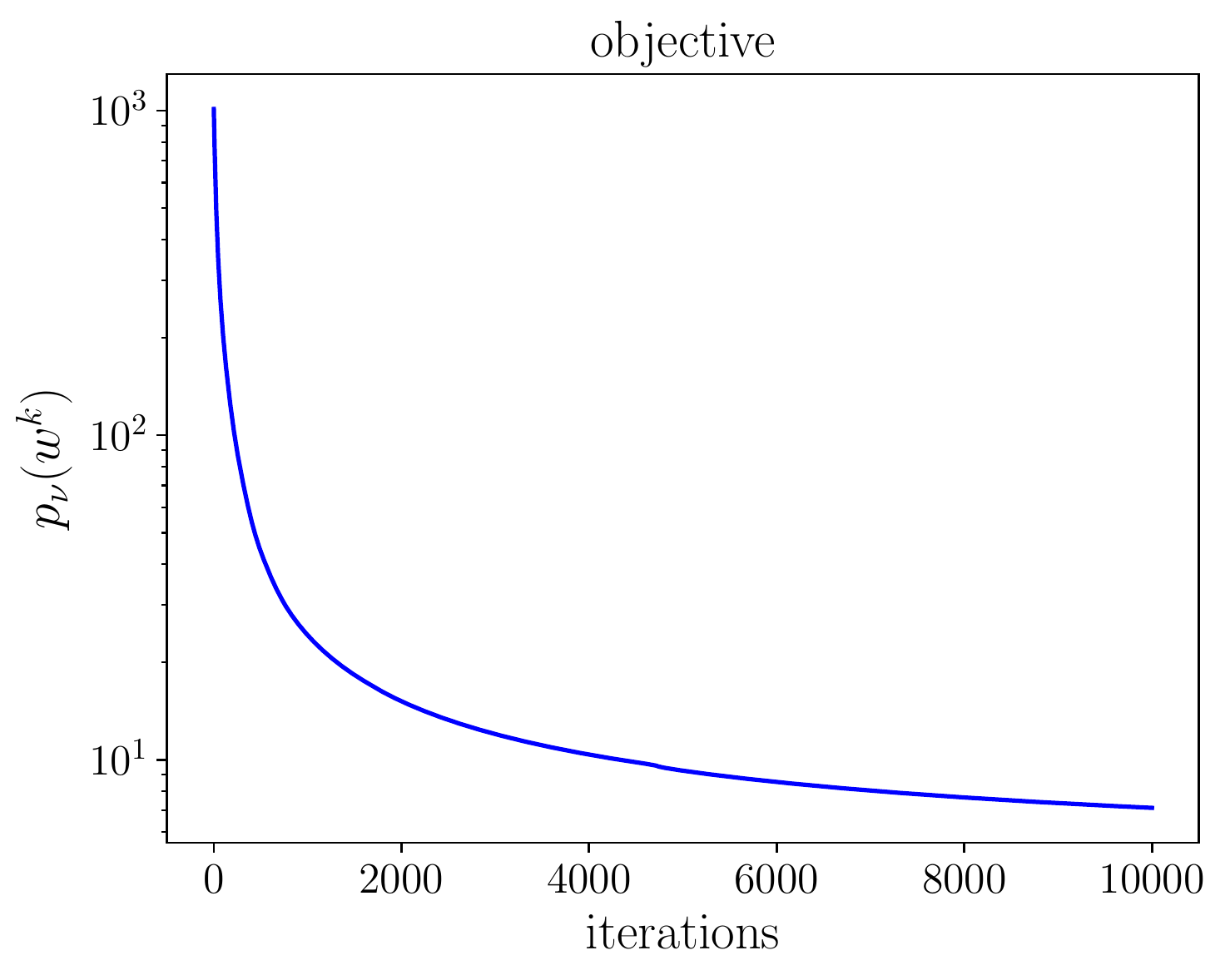}
\includegraphics[width=0.47\textwidth]{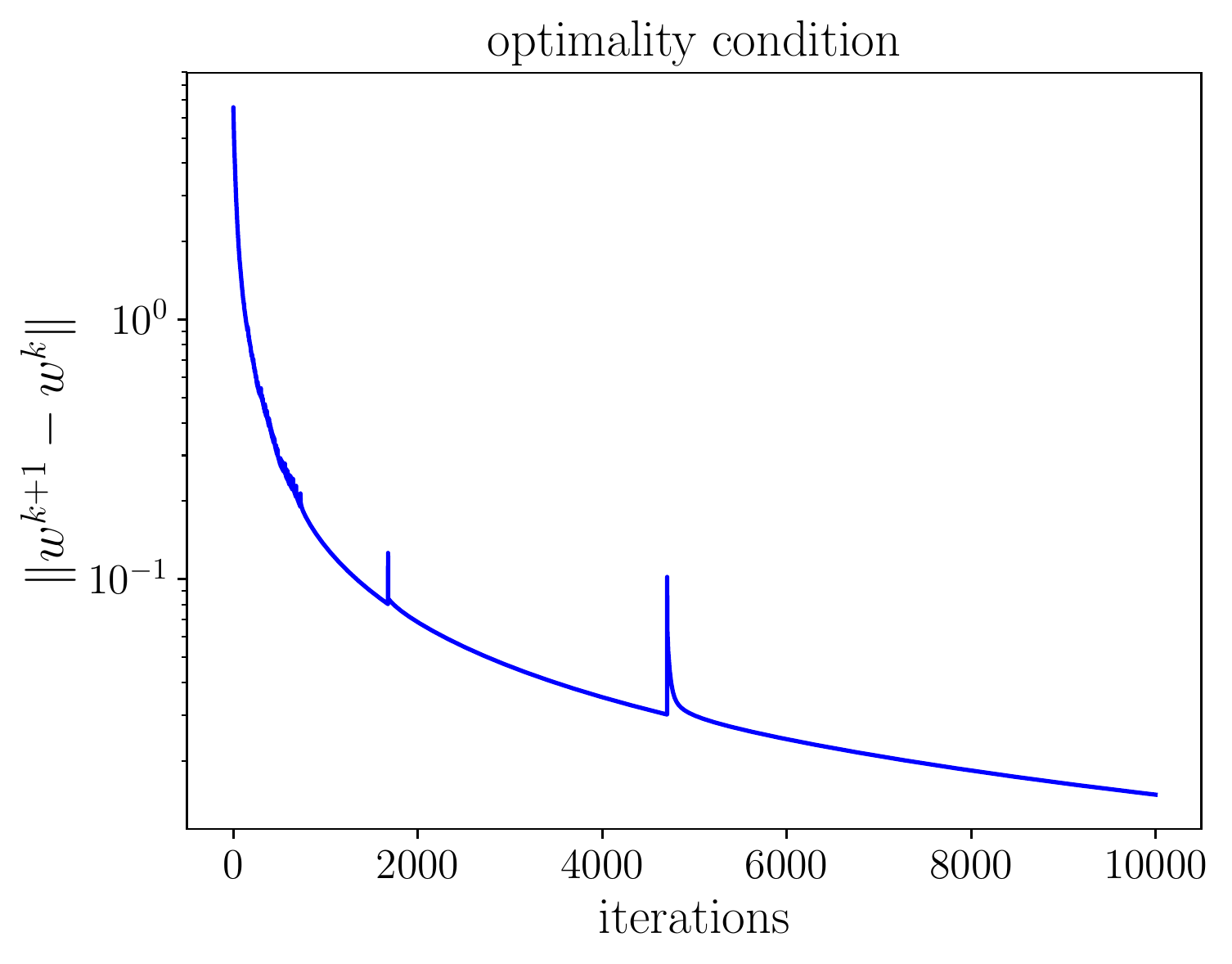}
\caption{\label{fig:sslr_con_his}Convergence plot for semi-supervised Logistic Regression.}
\end{figure}


To evaluate the results, we focus on prediction accuracy as a function of the $\gamma$ parameter in~\eqref{eq:relax_sslr},
 and fix $\lambda = 0.1$, $\nu = 1$.
We let $\gamma$ range among $0, 0.1, \ldots, 0.9, 1$.
We use two sets of MNIST data, considering binary classification of digit pairs ($0$, $1$) and ($4$, $9$).
For each choice of $\gamma$, we conduct 20 random trails and record the mean and variance of the test accuracy.

Testing errors are shown in Figure~\ref{fig:sslr_err}.
Several observation can be made.
\begin{itemize}
\item ($4$, $9$) yields a harder classification problem compared with ($0$, $1$). 
For each ratio of labeled to unlabeled data, test accuracy for ($4$, $9$) is lower than for ($0$, $1$).
\item Semi-supervised learning helps more for the MNIST dataset when we have very few labeled datapoints. 
\item The variance of accuracy results increases with $\gamma$ (as we pay more attention to unlabeled data), and decrease with ratio of labeled to unlabeled data.
\end{itemize}
We see the lowest test error for $\gamma = 0.1$ across all experiments.
\begin{figure}[h]
\centering
\includegraphics[width=0.47\textwidth]{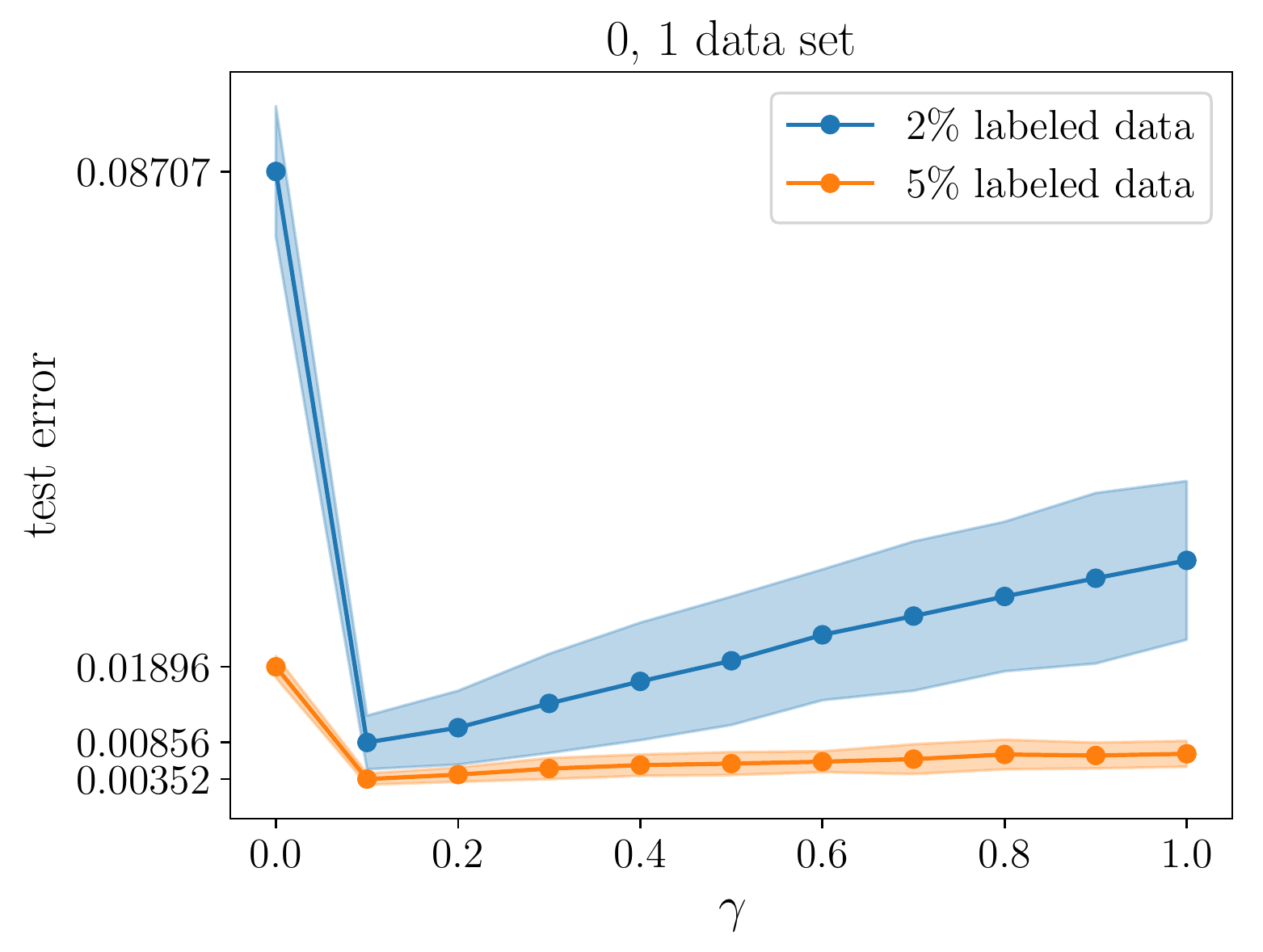}
\includegraphics[width=0.47\textwidth]{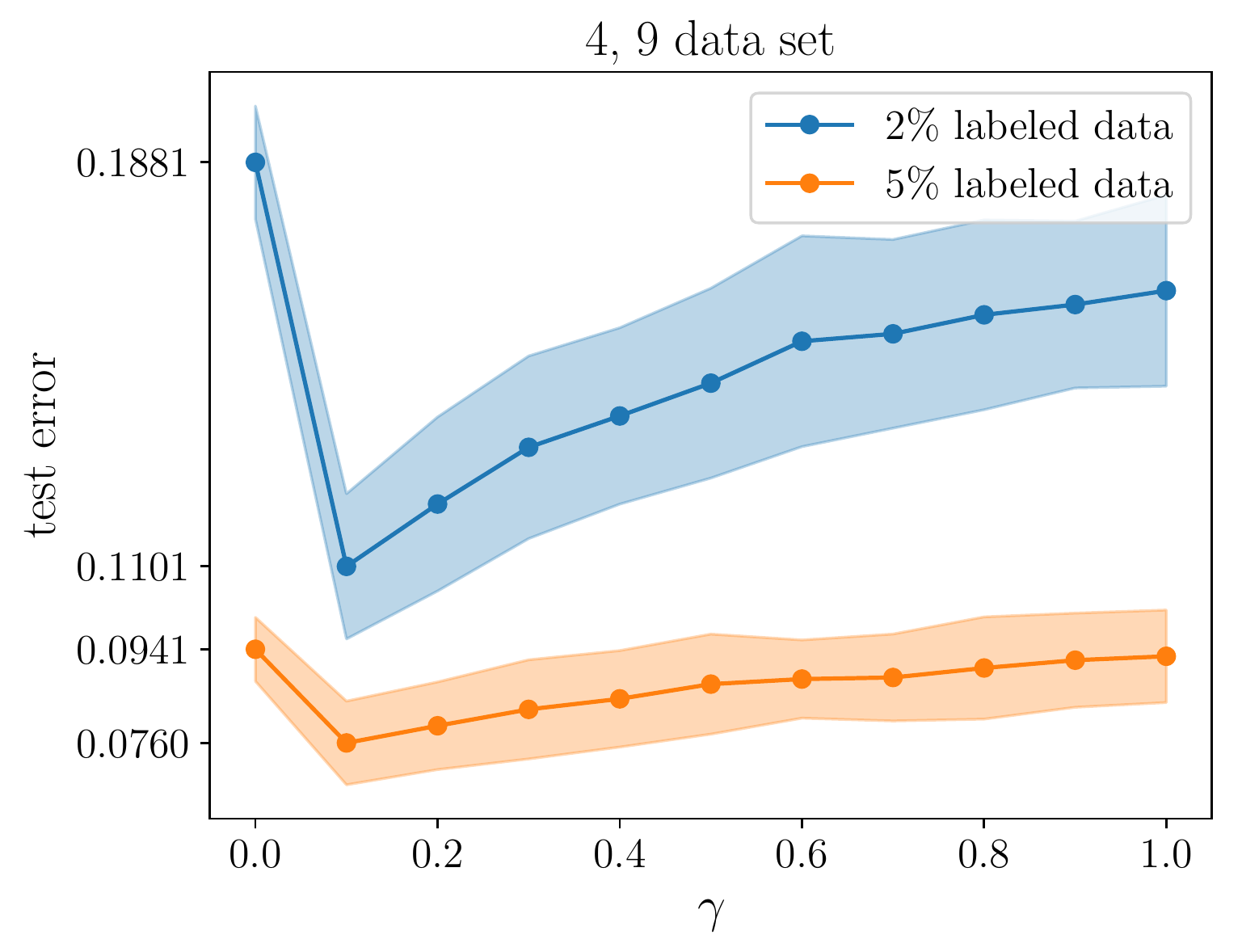}
\caption{\label{fig:sslr_err}Testing errors of semi-supervised logistic regression.
Left: results of the ($0$, $1$) classification experiment. Right: results of the ($4$, $9$) classification experiment. 
Both plots show the test errors as a function $\gamma$, with $2\%$ labeled data (blue) and $5\%$ labeled data (orange) .
The dotted lines and colored areas show the mean and range the results obtained across 20 random trails.}
\end{figure}

The results show that some degree of improvement is readily obtained from the SSLR strategy, and that the proposed 
approach can easily handle the new type of optimization problem. We leave extensions to more powerful learning models and comparisons with the robust literature on semi-supervised 
classification to future work. 


\subsection{Stochastic Shortest Path}
\label{sec:shorty}
In this experiment, we consider the stochastic shortest path problem described by \cite{bertsekas1991analysis}.
For a review of the history of shortest path problem, please check \cite{schrijver2012history}.
As shown in Figure~\ref{fig:mdp_graphs}, the version we consider looks for the minimum expected cost path from from node A to node B, given a certain graph structure.
At each node, we select between two graphs, then take a step by uniformly sampling available paths of the chosen graph to move to an adjacent node, paying the specified cost.

The specific example we consider contains $n=25$ nodes.
Two graphs are generated randomly, along with  the cost matrices 
$C^1, C^2\in\R^{n\times n}$ for each graph, with $C_{ij}^k$ defined as the cost\footnote{The cost matrices $C^k$ are generated uniformly at random.} to move from node $i$ to node $j$ within graph $k$. 
We also let $U^1, U^2\in\R^{n\times n}$ denote the connectivity matrices, with entry $U_{ij}^k$ encoding the
probability that node $i$ moves to node $j$ within graph $k$.

If we set $x^*\in\R^n$ as the optimal cost with the $i$-th entry representing best expected cost starting from node $i$,
we use the Bellman equation (see \cite{bellman1958routing})
\[\begin{aligned}
x_i^* &= \min\lt\{\mathbb{E}[C_{ij}^1 + x_j^*], \mathbb{E}[C_{ij}^2 + x_j^*]\rt\} = \min\lt\{\ip{u^1_{i}, c^1_{i} + x^*},\ip{u^2_{i}, c^2_{i} + x^*}\rt\}
\end{aligned}\]
and to formulate the stochastic shortest path as a deterministic optimization problem:
\begin{equation}
\label{eq:ssp}
\min_{x} \sum_{i=1}^d \lt|x_i - \min\lt\{\ip{u_i^1,x}+v_i^1, \ip{u_i^2,x}+v_i^2\rt\}\rt|
\end{equation}
where $u_i^k$ is the $i$-th row of $U^k$ and $v_i^k = \ip{u_i^k, c^k_{i}}$ with $c^k_i$ the $i$th row of $C^k$ for $k = 1,2$.
Problem~\eqref{eq:ssp} is nonsmooth and nonconvex; and using the method in the manuscript we write the approximate problem
\begin{equation}
\label{eq:smooth_ssp}
\min_{x,w^1,w^2}h(w^1, w^2) + \frac{1}{2\nu}\lt(\|A^1 x - w^1\|^2 + \|A^2 x - w^2\|^2\rt)
\end{equation}
where $A^k = U^k - I$, and $h(w^1,w^2)=\sum_{i=1}^d|\min\{w_i^1 + v_i^1, w_i^2 + v_i^2\}|$.

The optimal value of \eqref{eq:smooth_ssp} is 0 because there is a solution to the Bellman equation.
For the same reason, the solution of \eqref{eq:ssp} and \eqref{eq:smooth_ssp} coincide. The convergence results
are shown in Figure~\ref{fig:ssp_con}, where we see a linear convergence rate in Figure~\ref{fig:ssp_con}.
The obtained optimal policy is shown in Figure~\ref{fig:mdp_graphs}.
\begin{figure}[h]
\centering
\includegraphics[width=0.47\textwidth]{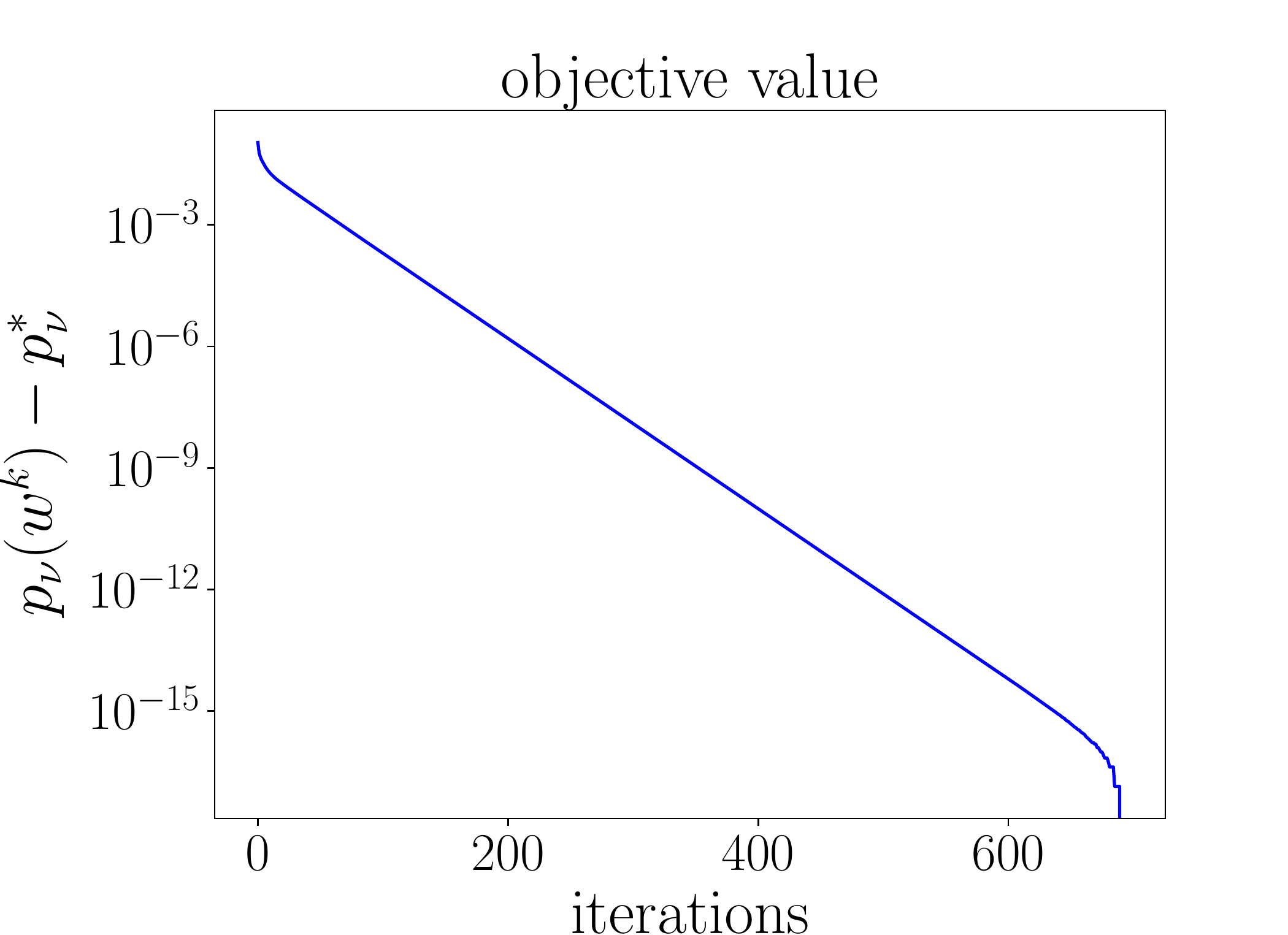}
\includegraphics[width=0.47\textwidth]{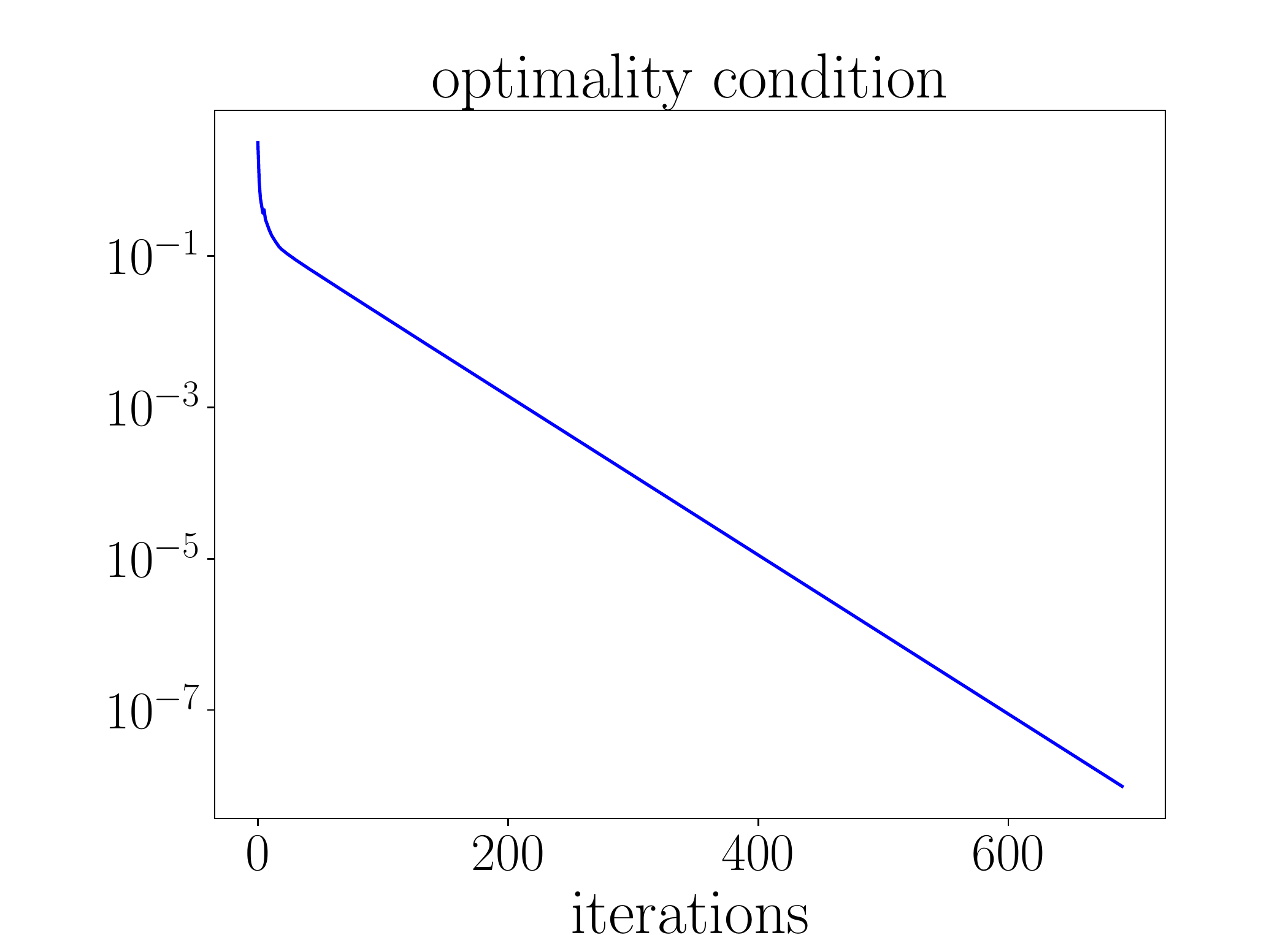}
\caption{\label{fig:ssp_con}Convergence plot of stochastic shortest path experiment.}
\end{figure}

\begin{figure}[h]
\centering
\includegraphics[width=0.4\textwidth]{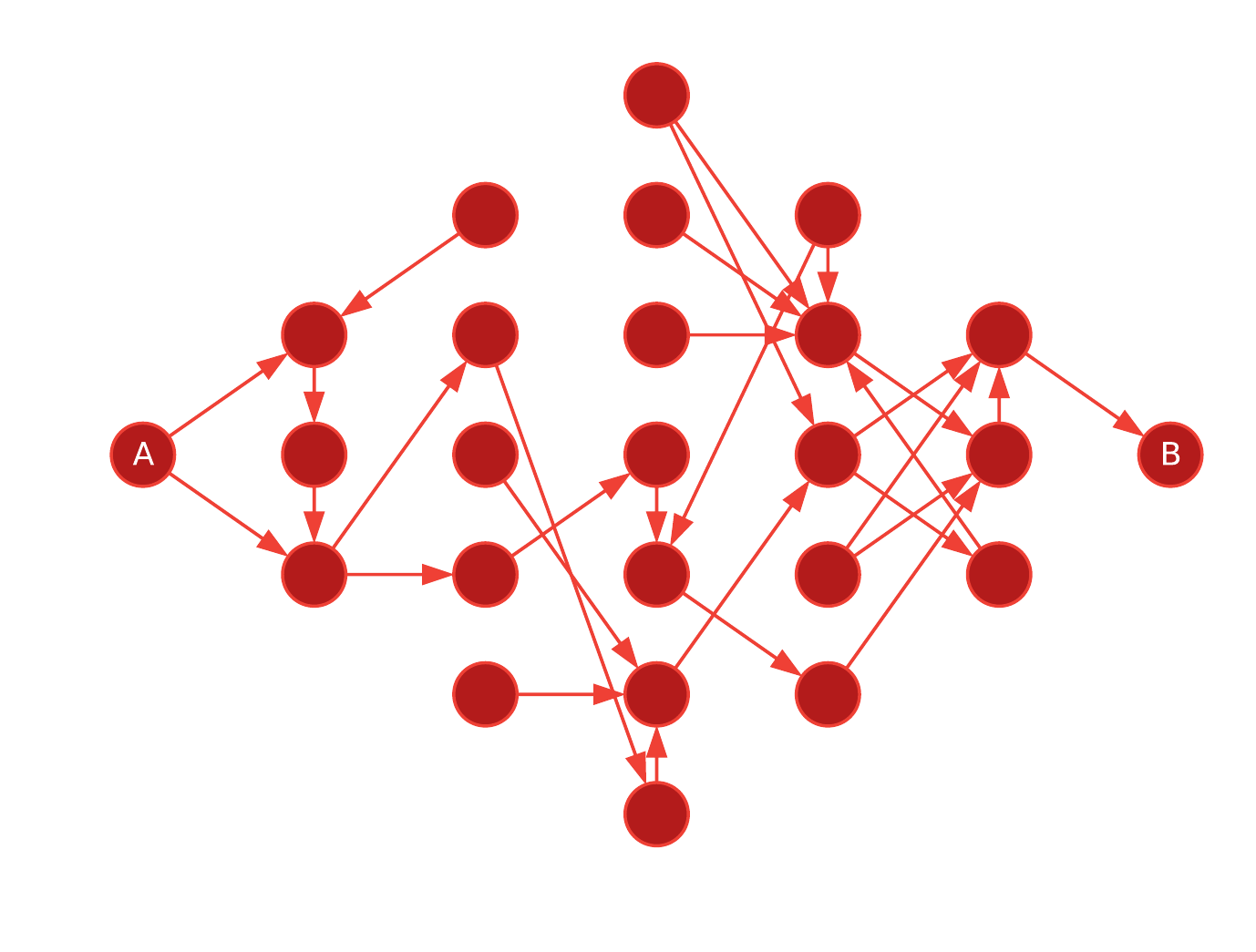}\quad
\includegraphics[width=0.4\textwidth]{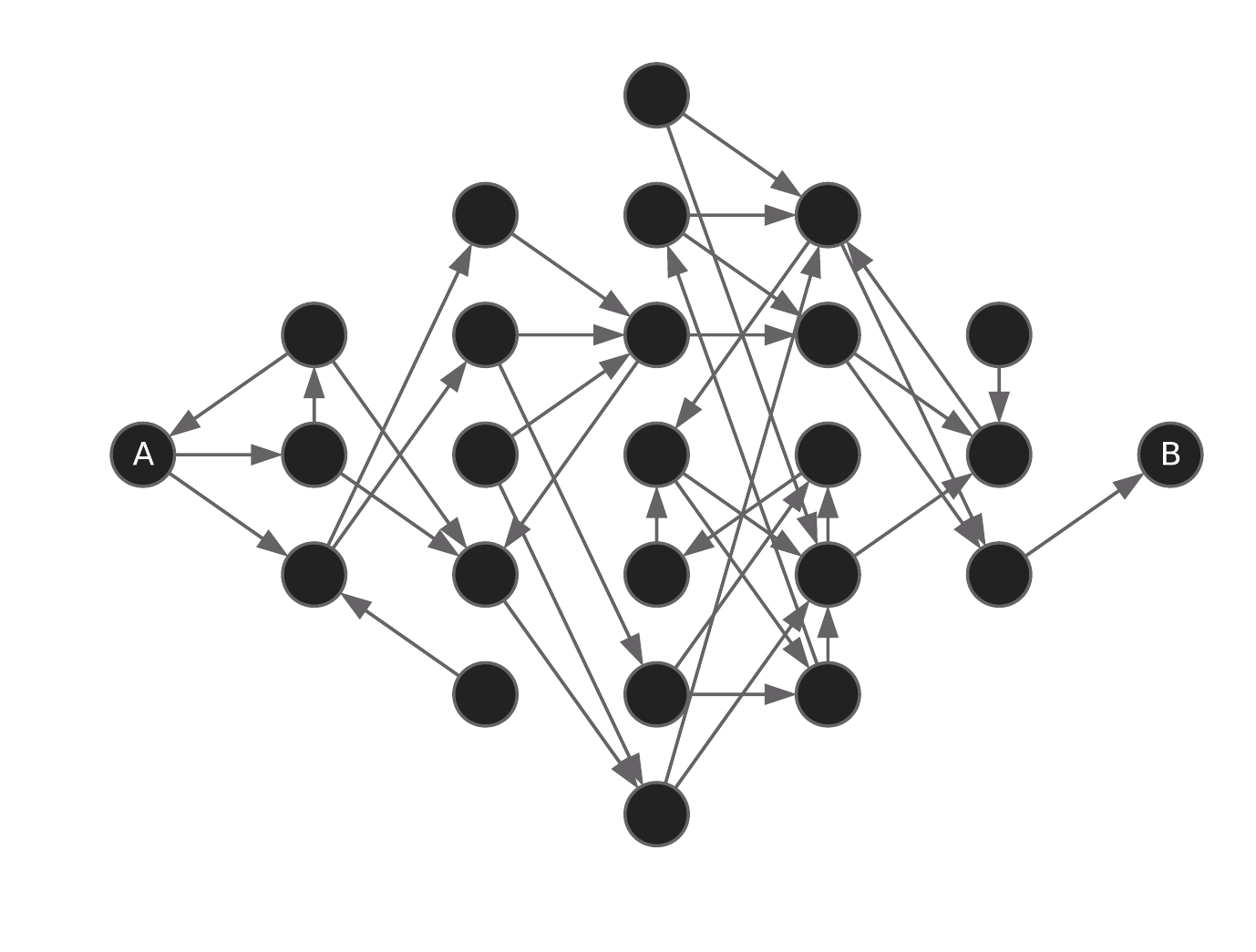}\quad
\includegraphics[width=0.4\textwidth]{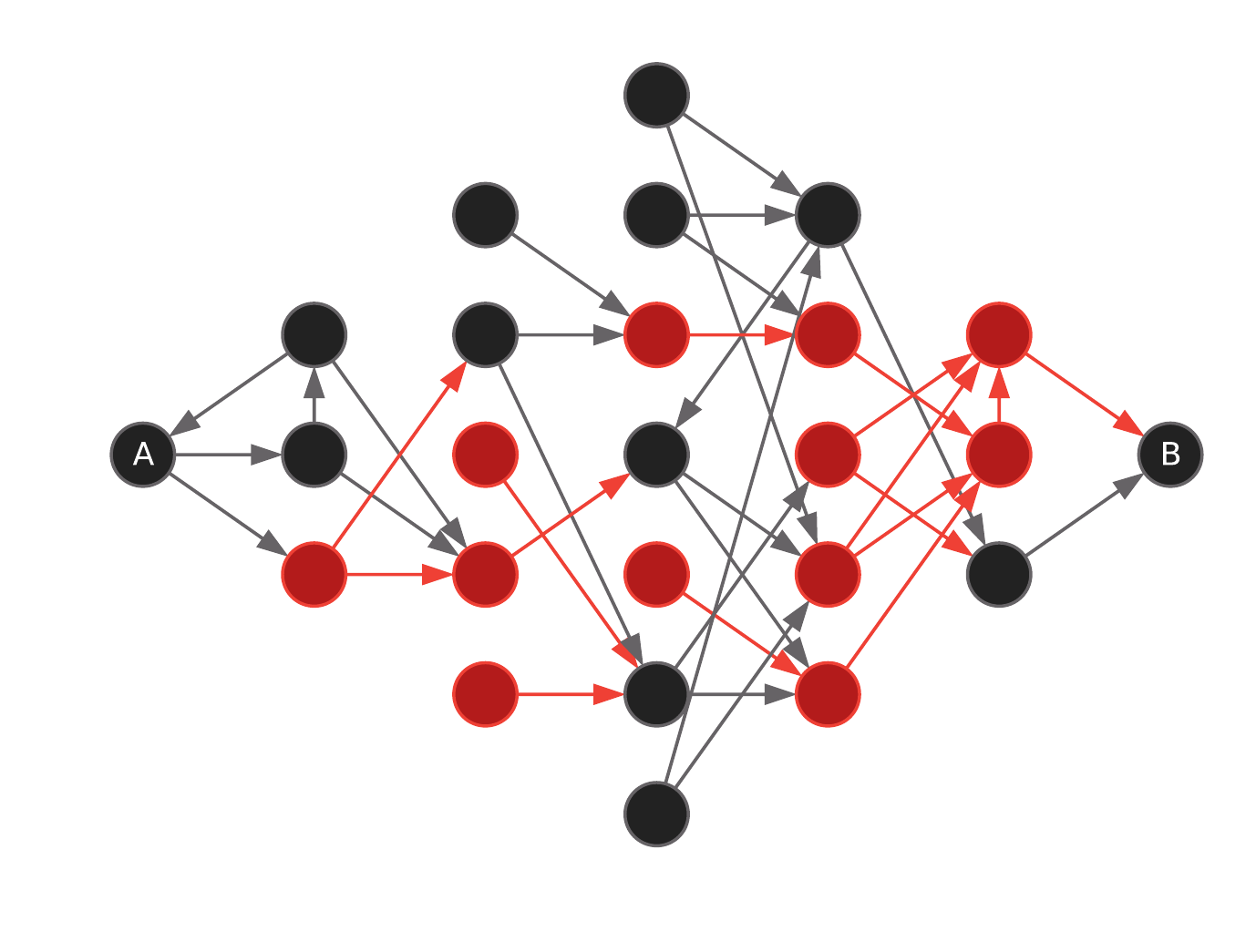}
\caption{\label{fig:mdp_graphs} We want to move from node A to node B;
and at each node we may switch between black and red graphs, shown in top left and top right panels, 
to minimize the expected cost.
The optimal policy graph is shown in the bottom panel.}
\end{figure}


\subsection{Convex and Nonconvex Clustering Problem}
\label{sec:clustering}

Clustering is a fundamental unsupervised learning technique.
Basic approaches including $k$-means~\citep{hartigan1979algorithm} and mixture models~\citep{dempster1977maximum} are popular due to their simplicity and statistical interpretation.
These approaches are built on essentially combinatorial subproblems (e.g. assigning members to clusters), making the approaches vulnerable to stalling at local minima.
More recently,  convex clustering formulations were proposed by \cite{lindsten2011just} and \cite{hocking2011clusterpath}.

The recent clustering formulations take the form 
\begin{equation}
\label{eq:clustering}
\min_{X}~\frac{1}{2}\sum_{i=1}^m\|x_i - u_i\|^2 + \lambda \sum_{i=1}^{m-1}\sum_{j=i+1}^m \rho(x_i - x_j),
\end{equation}
where $U = [u_1, \ldots, u_m]$ are the data points, $X = [x_1, \ldots, x_m]$ are the decision variables and $\rho$ is the fusion regularizer. 
In the convex setting, $\rho$ usually is chosen as the $\ell_2$ norm, to encourage $x_i = x_j$; the number of different elements is controlled 
by the penalty $\lambda$.
%
%
Problem~\eqref{eq:clustering} is then solved using splitting methods, including  ADMM~\ref{alg:admm}, or the alternating minimization algorithm (AMA) as proposed by \cite{chi2015splitting}. The proposed RS approach is a natural competitor, especially given the results of Section~\ref{subsec:admm}.

Relaxing problem~\eqref{eq:clustering}, we get the objective
\begin{equation}
\label{eq:relaxed_clustering}
\min_{ x,  w}~\frac{1}{2}\sum_{i=1}^m\| x_i -  u_i\|^2 + \lambda \sum_{i=1}^{m-1}\sum_{j=i+1}^m \rho( w_{ij}) + \frac{1}{2\nu}\sum_{i=1}^{m-1}\sum_{j=i+1}^m \| x_i -  x_j -  w_{ij}\|^2.
\end{equation}
Algorithm~\eqref{alg:pg_w} requires only a regularized least squares solve, and the proximal operator for $\rho$; 
it can be applied to both convex and nonconvex fusion penalties.

\paragraph{Comparison with ADMM.}
In this experiment, we generate a synthetic data set, with three clusters and 10 points per cluster.
The hyper parameters are chosen as $\lambda = 0.5$ and $\nu = 1$.
Results are shown in Figure~\ref{fig:clustering}, where we compare with ADMM and show the final adjacency matrix obtained from $w_{ij}$.
From the right plot of Figure~\ref{fig:clustering}, we can see that convex clustering via~\eqref{eq:clustering} and~\eqref{eq:relaxed_clustering}
cleanly identifies the clusters with these parameters.
The left plot of Figure~\ref{fig:clustering} shows identical performance between Algorithms~\ref{alg:pg_w} for~\eqref{eq:relaxed_clustering} (blue)
and ADMM for~\eqref{eq:clustering} (beige).
\begin{figure}[h]
\centering
\includegraphics[width=0.47\textwidth]{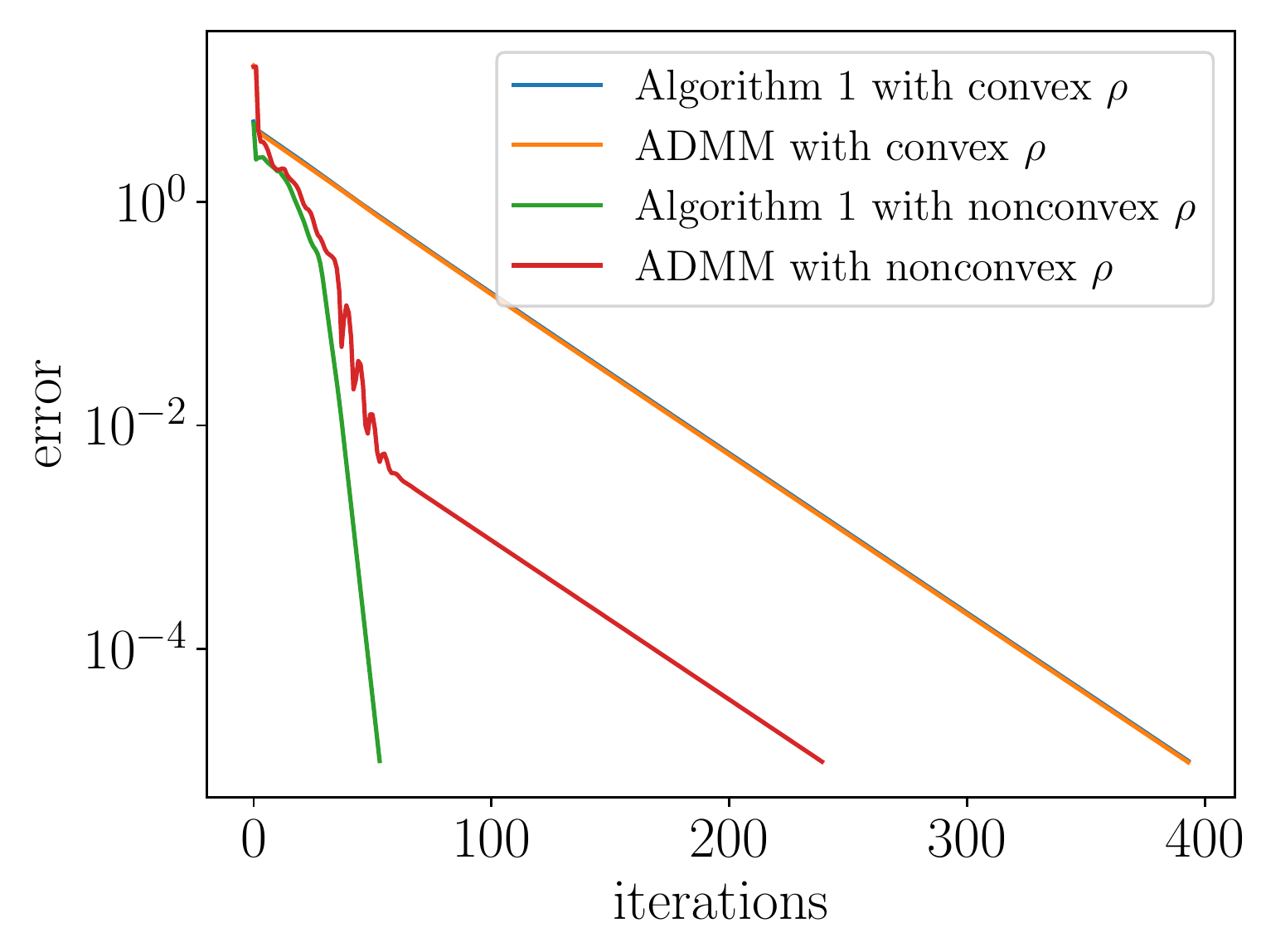}
\includegraphics[width=0.47\textwidth]{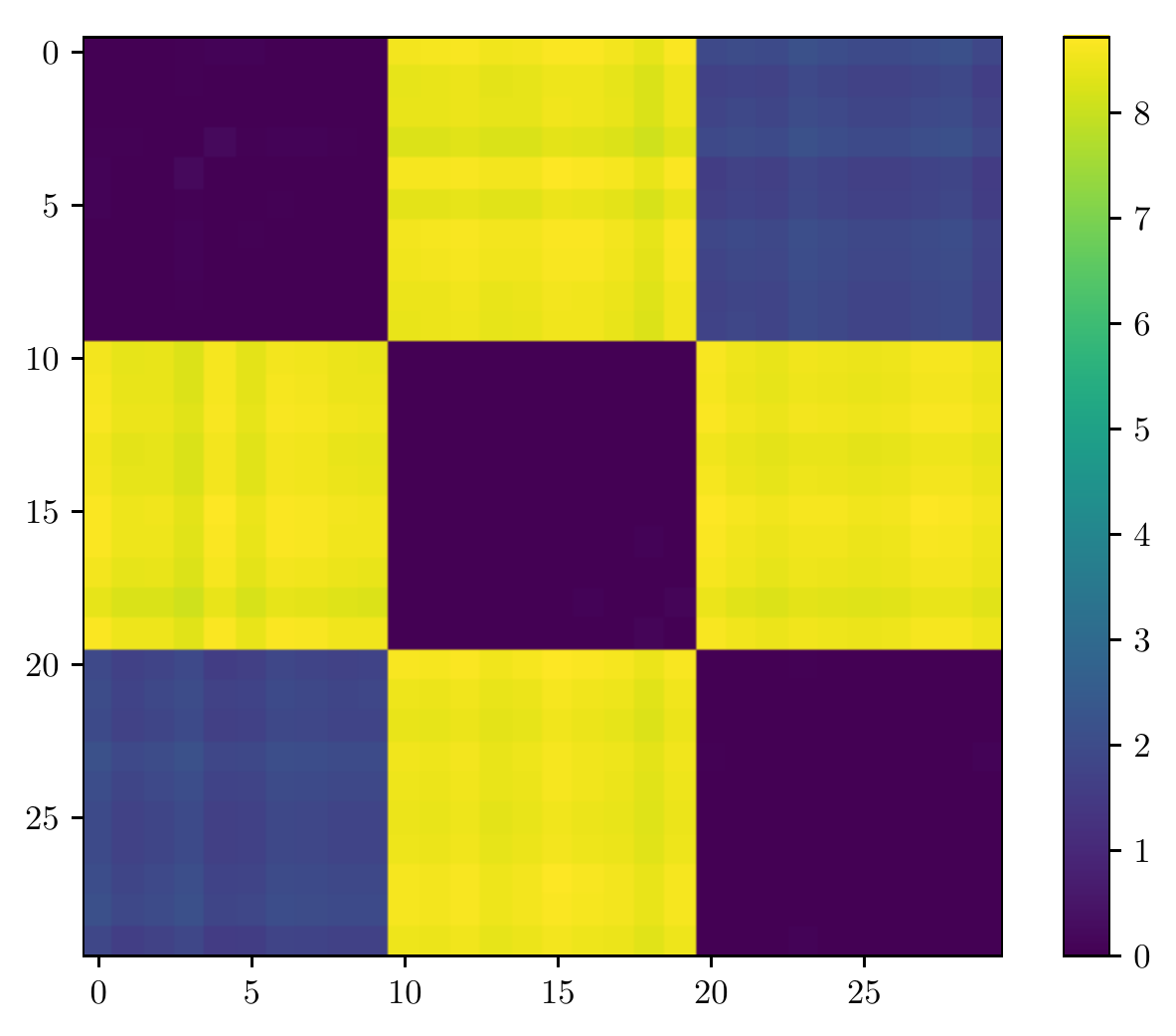}
\caption{\label{fig:clustering} Clustering results. Left: convergence plots of Algorithm~\ref{alg:pg_w} with convex $\rho$ (blue), ADMM with convex $\rho$ (orange),
Algorithm~\ref{alg:pg_w} with nonconvex $\rho$ (green) and ADMM with nonconvex $\rho$ (red).
Right: adjacency matrix of the final results from Algorithm~\ref{alg:pg_w}.}
\end{figure}
\paragraph{De-Biased Clustering.}
One issue with \eqref{eq:relaxed_clustering} is that $\rho = \|\cdot\|_2$ is very sensitive to $\lambda$, because of the bias introduced by points from different clusters.
For this specific reason, we consider a nonconvex SCAD \citep{fan2001variable}-like regularizer,
\[
\rho( d;\kappa) = \begin{cases}
\| d\|, & \| d\| \le \kappa\\
0, & \| d\| > \kappa
\end{cases}.
\]
This regularizer allows us to use prior knowledge on the radius of each cluster, encoded by $\kappa$.
This prior knowledge makes tuning $\lambda$ easier, and also speeds up convergence of the clustering algoirthms.
There is no convergence guarantee for ADMM when the SCAD penalty is used; however it still converges, even faster than for the convex case. 
Algorithm~\eqref{alg:pg_w} is guaranteed to converge for~\eqref{eq:relaxed_clustering}, and has a significantly faster rate, 
see the left plot of  Figure~\ref{fig:clustering}.

To test behavior with respect to the fusion penalty $\lambda$, we allow $\lambda$ to vary in a grid from 0 to 1, and plot the path of the variables $x_i$.
We also compare the convergence results for $\lambda = 0.5$ between convex and nonconvex $\rho$.
These results are shown in Figure~\ref{fig:cvx_vs_ncvx}.

When we use clustering fusion penalties, all points affect one another; for larger values of the penalty $\lambda$, all points are rapidly assigned 
to a single cluster with center given by the center of mass of the point cloud. In contrast, using the nonconvex SCAD allows clusters that are far enough 
away to not affect each other, allowing desirable clustering behavior locally without the overall global effect.

\begin{figure}[h]
\centering
\includegraphics[width=0.47\textwidth]{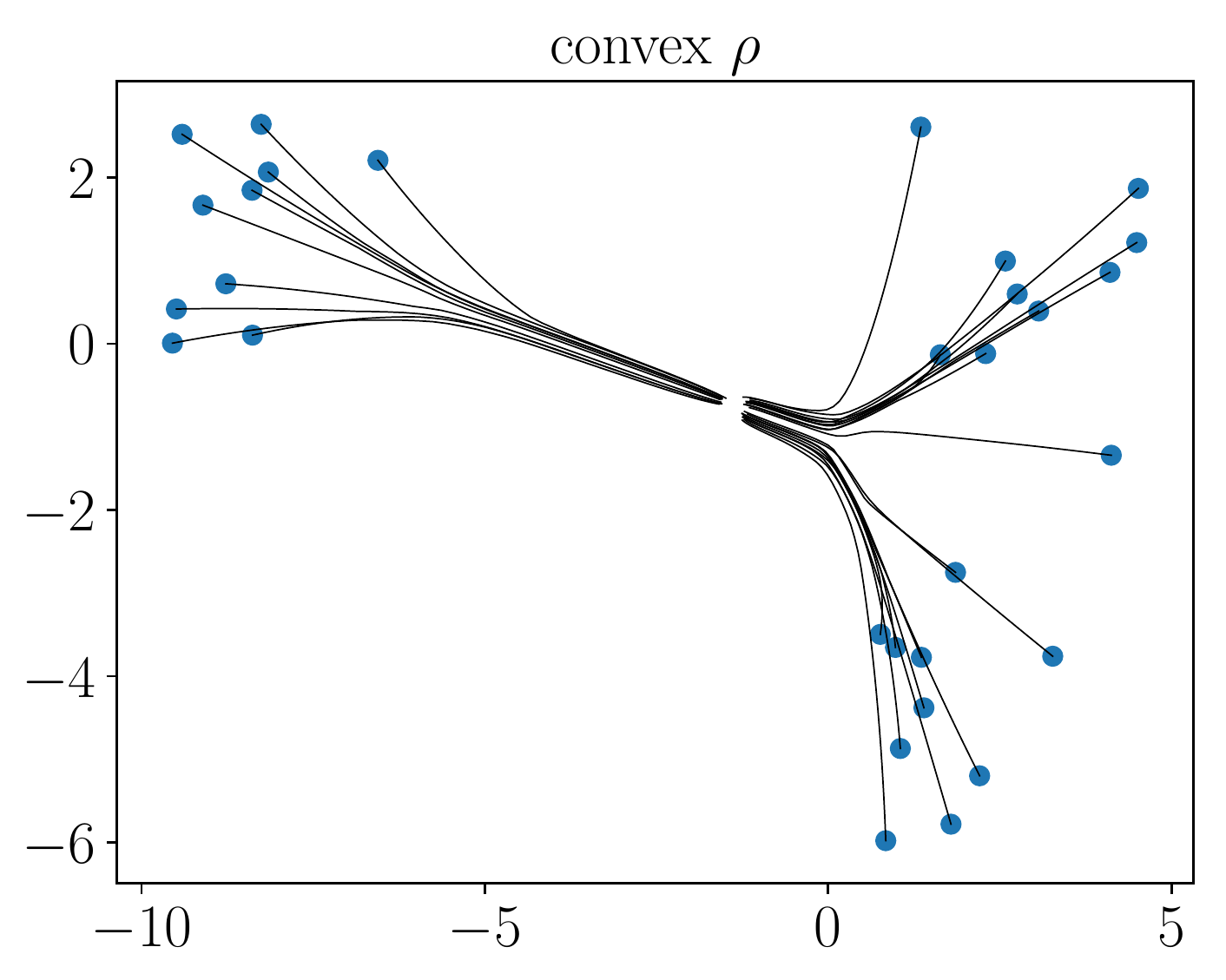}
\includegraphics[width=0.47\textwidth]{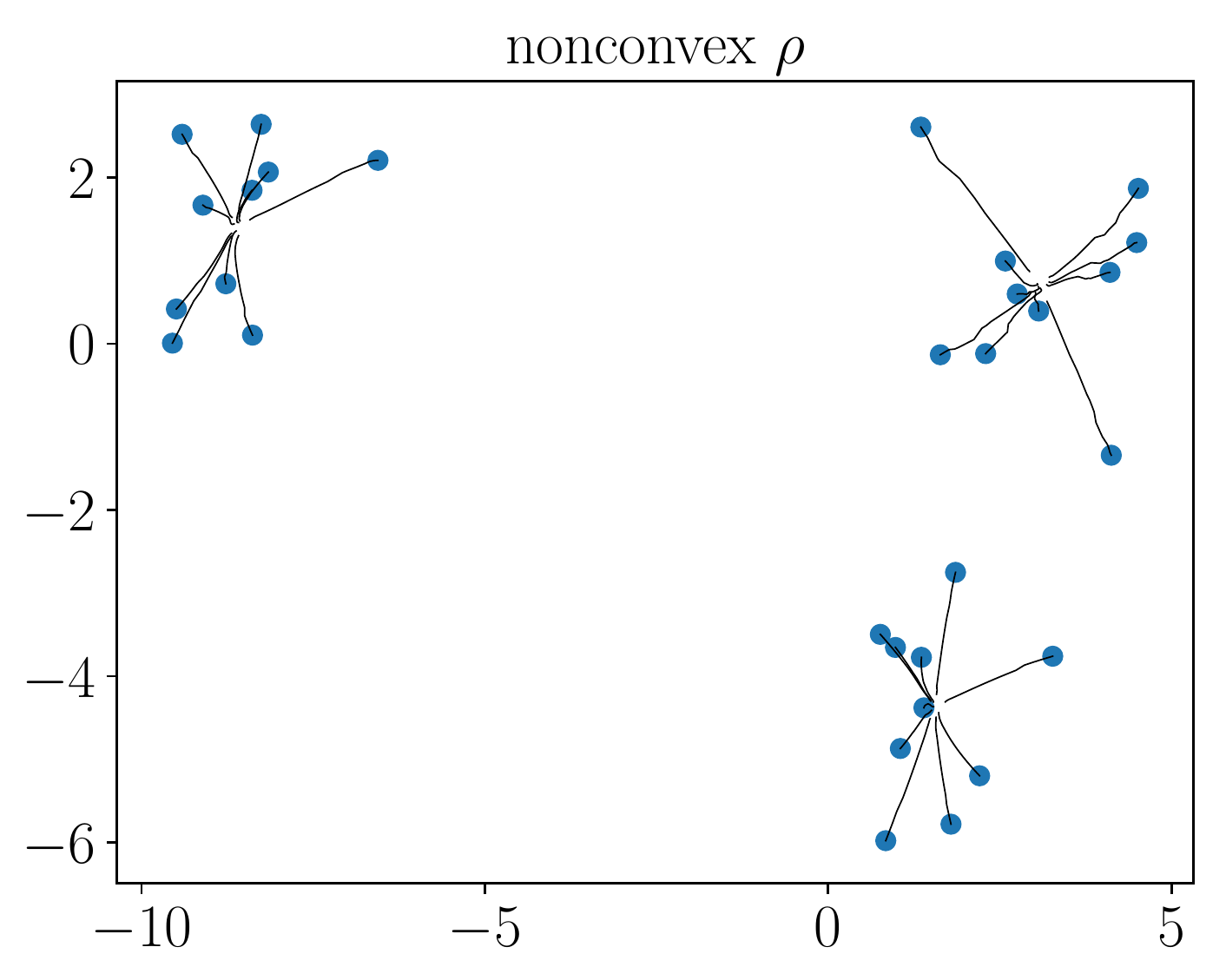}
\caption{Comparison of the clustering paths for convex vs. nonconvex $\rho$ across penalty parameters.
Left: clustering path with convex $\rho = \|\cdot\|_2$. Right: clustering path of the variables using the nonconvex SCAD penalty $\rho$.
Nonconvex fusion penalties give additional modeling flexibility and interpretable results. 
}
\label{fig:cvx_vs_ncvx}
\end{figure}

\section{Discussion}
We have developed a new `relax and split' approach for nonconvex-composite problems, 
and extended it to trimmed robust formulations. The approach applies to highly nonconvex models
(those that are not even weakly convex), and can be easily applied to difficult structured nonsmooth nonconvex problems. 
The problem class is more general than those analyzed by recent sub-gradient based methods for nonsmooth nonconvex optimization. 

We have also shown how the model and associated algorithms can be used for a variety of applications, including
exact phase retrieval, semi-supervised classification, stochastic shortest path problems, and new approaches to clustering. 
Every such application can be `robustified' with the trimming extension, as we showed using the outlier-contaminated phase retrieval problem. 

The paper opens several new avenues and raises important questions for future work, including a comprehensive analysis of inexact `relax-and-split' approaches, 
extensions to compositions of nonconvex losses with nonlinear maps, and substantial detailed numerical work to evaluate  
the approach across a range of application domains.

\noindent{\bf  Acknowledgment.}
Research of A. Aravkin was partially supported by the Washington Research Foundation Data Science Professorship. 

\bibliography{bibfile,bibliography_dim,icml}

	\newpage
\appendix

\section{Proofs of Section~\ref{sec:analysis}}
\label{sec:appendix}

\begin{proof}[Theorem~\ref{co:lip_cont}]\\
Observe that,
\[
\min_x g(x) + \frac{1}{2\nu}\|Ax - w\|^2 = \min_{x,y}\lt\{g(x) + \frac{1}{2\nu}\|y - w\|^2:y = Ax\rt\}\\
\]
Define $Ag(y) = \min_x\{g(x) : Ax = y\}$ which is the image of $g$ under $A$.
From \cite{con_ter} [Theorem 5.7] we know that $Ag$ is a convex function.
Moreover, since $g$ is proper and bounded below, we know that $Ag$ is also proper.
We cannot show $Ag$ is closed unless we know more information about $g$ and $A$~\cite{con_ter} [Theorem 9.2].
Instead we show that for every $w$,
\[
g_\nu(w) = \tilde g_\nu(w) := \min_x (\textrm{cl}\, Ag)(y) + \frac{1}{2\nu}\|y - w\|^2,
\]
where $\mbox{cl}$ denotes the closure of the function.
Since $(\textrm{cl}\, Ag)(y) \le Ag(y)$ for all $y$, we know that,
\[
g_\nu(w) \ge \tilde g_\nu(w).
\]
Since $(\textrm{cl}\, Ag)(y) + \frac{1}{2\nu}\|y - w\|^2$ is closed and strongly convex, we also know that
there exist a unique minimizer,
\[
y^* = \argmin_y~(\textrm{cl}\, Ag)(y) + \frac{1}{2\nu}\|y - w\|^2.
\]
From \cite{con_ter}[Theorem 7.5], we know, for some $z\in\textrm{ri}\,\dom Ag$,
\[
\textrm{cl}\, Ag(y^*) = \lim_{\lambda \uparrow 1} Ag(\lambda y^* + (1-\lambda) z).
\]
Define the sequence $\{y_\lambda\}$, such that,
\(
y_\lambda = \lambda y^* + (1-\lambda)z.
\)
Since $y^* \in \dom \textrm{cl}\,Ag = \textrm{cl}\,\dom Ag$, using \cite{con_ter}[Theorem 6.1] we know that for every $0 \le \lambda <1$,
$
y_\lambda \in \textrm{ri}\, \dom Ag.
$
Therefore,
\[
\tilde g_\nu(w) = Ag(y^*) + \frac{1}{2\nu}\|y^* - w\|^2 = \lim_{\lambda\uparrow 1} Ag(y_\lambda) + \frac{1}{2\nu}\|y_\lambda - w\|^2 \ge g_\nu(w),
\]
so $g_\nu(w) = \tilde g_\nu(w)$. From \cite{RW98} [Theorem 2.26] we know that $g_\nu(w)$ is a closed convex function, with a $\frac{1}{\nu}$-Lipschitz continuous gradient,
\[
\nabla g_\nu(w) = \nabla \tilde g_\nu(w) = \frac{1}{\nu}(w - y^*).
\]
Since $y^* \in \dom \textrm{cl}\,Ag = \textrm{cl}\,\dom Ag \subset \mbox{Range}(A)$, we define $x^* = \{x : Ax = y^*\}$.
Then we have the desired result,
\[
\nabla g_\nu(w) = \frac{1}{\nu}(w - Ax), \quad \forall x \in x^*.
\]
\end{proof}

\begin{proof}[Theorem~\ref{th:al_proj}]\\
Using the iteration of Algorithm~\ref{alg:pg_w}, and introducing the sequence $\{x^k\}$, we have,
\[
0 \in \frac{1}{\nu} A^\top(Ax^k - w^k) + \partial g(x^k), \quad 0 \in \frac{1}{\nu}(w^k - Ax^k) + \partial h(w^k).
\]
From the definition of the objective, we have,
\begin{align*}
p_\nu(w^k)=&h(w^k) + \frac{1}{2\nu}\|Ax^k - w^k\|^2 + g(x^k)\\
=& h(w^k) + \frac{1}{2\nu}\|Ax^{k-1} - w^k + A(x^k - x^{k-1})\|^2 + g(x^k)\\
=& h(w^k) + \frac{1}{2\nu}\|Ax^{k-1} - w^k\|^2 + \frac{1}{\nu}\ip{Ax^{k-1} - w^k, A(x^k - x^{k-1})}\\ 
& + \frac{1}{2\nu}\|A(x^k - x^{k-1})\|^2 + g(x^k)\\
\le& h(w^{k-1}) + \frac{1}{2\nu}\|Ax^{k-1} - w^{k-1}\|^2 + g(x^{k-1})\\
& + \frac{1}{\nu}\ip{Ax^{k-1} - w^k, A(x^k - x^{k-1})} + \frac{1}{2\nu}\|A(x^k - x^{k-1})\|^2 + g(x^k) - g(x^{k-1}).
\end{align*}
Since $g$ is convex,
\begin{align*}
g(x^k) - g(x^{k-1}) &\le \ip{\partial g(x^k), x^k - x^{k-1}} = \frac{1}{\nu}\ip{w^k - Ax^k,A(x^k - x^{k-1})}
\end{align*}
Therefore we have,
\begin{align*}
p_\nu(w^k) - p_\nu(w^{k-1}) \le & \frac{1}{\nu}\ip{Ax^{k-1} - w^k, A(x^k - x^{k-1})}  + \frac{1}{2\nu}\|A(x^k - x^{k-1})\|^2\\ 
& + \frac{1}{\nu}\ip{w^k - Ax^k,A(x^k - x^{k-1})}\\
=& -\frac{1}{2\nu}\|A(x^{k-1} - x^k)\|^2
\end{align*}
Summing up, we get
\[
\frac{1}{k}\sum_{i=1}^k T_\nu(w^k) \le \frac{1}{k}\sum_{i=1}^k\|\tfrac{1}{\nu}A(x^{i-1} - x^i)\|^2 \le \frac{2}{\nu k}[p_\nu(w^0) - p_\nu^*],
\]
as required.
\end{proof}
\newpage

\begin{lemma}
\label{lm:linear}
Define a sequence 
\(
d^k = \frac{1}{\nu}(w^k - w^{k+1})
\)
based on the iterates generated by Algorithm~\ref{alg:pg_w}. 
If Assumption~\ref{asp:strong_cvx_noreg} holds, then $p_\nu$ has a minimizer $w^*$, and
\[
\ip{w^k - w^*, d^k} \ge \frac{1}{2\nu}\|(I-P_A)(w^k - w^*)\|^2 + \frac{1}{\nu}\|\nu d^k\|^2 - \frac{1}{2\nu}\|\nu(I-P_A)d^k\|^2
+ \frac{\alpha}{2}\|w^{k+1} - w^*\|^2.
\]
\end{lemma}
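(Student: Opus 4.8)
The plan is to combine the optimality condition of the proximal-gradient step with the stationarity of the minimizer, invoke strong monotonicity of $\partial h$, and finish by completing a square. First I would record the structural facts under Assumption~\ref{asp:strong_cvx_noreg}: since $h$ is $\alpha$-strongly convex and $g_\nu(w)=\frac{1}{2\nu}\|(I-P_A)w\|^2$ is convex by Theorem~\ref{co:lip_cont}, the objective $p_\nu=h+g_\nu$ is $\alpha$-strongly convex and hence has a (unique) minimizer $w^*$, which justifies the first clause of the lemma. I would also note that $\nabla g_\nu(w)=\frac{1}{\nu}(I-P_A)w$ with $I-P_A$ a symmetric idempotent, so that
\[
\nabla g_\nu(w^k)-\nabla g_\nu(w^*)=\tfrac{1}{\nu}(I-P_A)(w^k-w^*).
\]

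Next I would extract the two subdifferential inclusions. The prox step $w^{k+1}=\prox_{\nu h}(w^k-\nu\nabla g_\nu(w^k))$ yields, with $d^k=\frac{1}{\nu}(w^k-w^{k+1})$,
\[
d^k-\nabla g_\nu(w^k)\in\partial h(w^{k+1}),
\]
while stationarity of $w^*$ gives $-\nabla g_\nu(w^*)\in\partial h(w^*)$. Applying $\alpha$-strong monotonicity of $\partial h$ to these two subgradients, and substituting the displayed identity for the gradient difference, produces
\[
\ip{d^k-\tfrac{1}{\nu}(I-P_A)(w^k-w^*),\,w^{k+1}-w^*}\ge\alpha\|w^{k+1}-w^*\|^2.
\]

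Finally I would substitute $w^{k+1}-w^*=(w^k-w^*)-\nu d^k$, expand the inner product, and use symmetry and idempotency of $I-P_A$ to collapse the cross terms into $\ip{(I-P_A)(w^k-w^*),(I-P_A)d^k}$. Solving for $\ip{w^k-w^*,d^k}$ and comparing against the claimed right-hand side, the discrepancy should reduce exactly to
\[
\tfrac{1}{2\nu}\|(I-P_A)(w^k-w^*)-\nu(I-P_A)d^k\|^2+\tfrac{\alpha}{2}\|w^{k+1}-w^*\|^2\ge 0,
\]
and dropping these nonnegative terms gives the stated inequality. I expect the main obstacle to be this last bookkeeping step: one must carry the projection algebra carefully and recognize that the leftover terms assemble into a perfect square, since without that observation the derived bound does not visibly match the target form.
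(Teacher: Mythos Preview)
Your proposal is correct. The algebra in your final step indeed closes: with $Q=I-P_A$ symmetric idempotent, the gap between what strong monotonicity gives and the stated bound is exactly $\tfrac{1}{2\nu}\|Q(w^{k+1}-w^*)\|^2+\tfrac{\alpha}{2}\|w^{k+1}-w^*\|^2\ge 0$, as you anticipated.

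Your route differs from the paper's in how strong convexity is invoked. The paper never writes down the subgradient of $h$ at $w^*$ or appeals to monotonicity; instead it expands the function value $p_\nu(w^{k+1})$, applies the one-sided strong-convexity subgradient inequality $h(w^{k+1})\le h(w^*)+\ip{s^{k+1},w^{k+1}-w^*}-\tfrac{\alpha}{2}\|w^{k+1}-w^*\|^2$ with $s^{k+1}=d^k-\tfrac{1}{\nu}(I-P_A)w^k\in\partial h(w^{k+1})$, and then uses the minimality $p_\nu(w^{k+1})-p_\nu(w^*)\ge 0$ as the nonnegative ``slack'' that is discarded. Your approach trades this function-value comparison for the two-point strong monotonicity of $\partial h$ and a complete-the-square. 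Your version is a bit more compact and avoids the intermediate expansion of $\|(I-P_A)w^k\|^2$ around $w^*$; the paper's version keeps everything at the level of objective values, which makes the role of optimality of $w^*$ more explicit. Either way the same ingredients (prox optimality, stationarity of $w^*$, $\alpha$-strong convexity, idempotency of $I-P_A$) drive the result.
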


\begin{proof}[Lemma~\ref{lm:linear}]
\begin{align*}
p_\nu(w^{k+1}) &= \frac{1}{2\nu}\|(I - P_A) w^{k+1}\|^2 + h(w^{k+1})\\
&= \frac{1}{2\nu}\|(I - P_A)(w^k - \nu d^k)\|^2 + h(w^{k+1})\\
&= \frac{1}{2\nu}\|(I - P_A)w^k\|^2 - \frac{1}{\nu}\ip{\nu d^k, (I - P_A)w^k} + \frac{1}{2\nu}\|\nu(1-P_A)d^k\|^2 + h(w^{k+1})
\end{align*}
Decompose the first term above as follows:
\begin{align*}
\frac{1}{2\nu}\|(I-P_A)w^k\|^2 &= \frac{1}{2\nu}\|(I-P_A)(w^k - w^* + w^*)\|^2\\
&= \frac{1}{2\nu}\|(I-P_A)(w^k - w^*)\|^2 + \frac{1}{\nu}\ip{w^*,(I-P_A)(w^k - w^*)} + \frac{1}{2\nu}\|(I-P_A)w^*\|^2\\
&= -\frac{1}{2\nu}\|(I-P_A)(w^k - w^*)\|^2 + \frac{1}{\nu}\ip{w^k - w^*, (I - P_A)w^k} + \frac{1}{2\nu}\|(I-P_A)w^*\|^2
\end{align*}
Then we have,
\begin{align*}
p_\nu(w^{k+1})
=& -\frac{1}{2\nu}\|(I-P_A)(w^k - w^*)\|^2 + \frac{1}{\nu}\ip{w^{k+1} - w^*, (I - P_A)w^k}\\
&+ \frac{1}{2\nu}\|(I-P_A)w^*\|^2+ \frac{1}{2\nu}\|\nu(1-P_A)d^k\|^2 + h(w^{k+1})
\end{align*}
Since $h$ is convex and we know
\(
d^k - \frac{1}{\nu}(w^k - P_A w^k)\in\partial h(w^{k+1})
\)
we have,
\[
h(w^{k+1}) \le h(w^*) + \frac{1}{\nu}\ip{\nu d^k - (I-P_A)w^k, w^{k+1} - w^*} - \frac{\alpha}{2}\|w^{k+1} - w^*\|^2
\]
Combining these results, we get 
\begin{align*}
p_\nu(w^{k+1}) \le&  -\frac{1}{2\nu}\|(I-P_A)(w^k - w^*)\|^2 + \frac{1}{\nu}\ip{w^{k+1} - w^*, \nu d^k}\\
&+ \frac{1}{2\nu}\|(I-P_A)w^*\|^2+ \frac{1}{2\nu}\|\nu(1-P_A)d^k\|^2 + h(w^*)- \frac{\alpha}{2}\|w^{k+1} - w^*\|^2\\
0 \le p_\nu(w^{k+1}) - p_\nu(w^*) \le & -\frac{1}{2\nu}\|(I-P_A)(w^k - w^*)\|^2 + \frac{1}{\nu}\ip{w^k - w^*, \nu d^k}\\
&- \frac{1}{\nu}\|\nu d^k\|^2 + \frac{1}{2\nu}\|\nu(1-P_A)d^k\|^2- \frac{\alpha}{2}\|w^{k+1} - w^*\|^2
\end{align*}
which show the result:
\[
\ip{w^k - w^*, d^k} \ge \frac{1}{2\nu}\|(I-P_A)(w^k - w^*)\|^2 + \frac{1}{\nu}\|\nu d^k\|^2 - \frac{1}{2\nu}\|\nu(1-P_A)d^k\|^2
+ \frac{\alpha}{2}\|w^{k+1} - w^*\|^2.
\]
\end{proof}

\begin{proof}[Theorem~\ref{th:linear}]\\
Using the same $\{d^k\}$ as in Lemma~\ref{lm:linear},
\begin{align*}
\|w^{k+1} - w^*\|^2 &= \|w^k - \nu d^k - w^*\|^2\\
\|w^{k+1} - w^*\|^2 &= \|w^k - w^*\|^2 - 2\ip{w^k - w^*, \nu d^k} + \|\nu d^k\|^2\\
(1 + \alpha\nu)\|w^{k+1} - w^*\|^2&\le \|w^k - w^*\|^2 - \|(I - P_A)(w^k - w^*)\|^2 - \|\nu d^k\|^2 + \|\nu (I - P_A)d^k\|^2\\
(1 + \alpha\nu)\|w^{k+1} - w^*\|^2&\le \|P_A(w^k - w^*)\|^2 - \|\nu P_Ad^k\|^2\\
\|w^{k+1} - w^*\|^2&\le \frac{1}{1+ \alpha\nu}\lt(\|P_A(w^k - w^*)\|^2 - \|P_A(w^k - w^{k+1})\|^2\rt)\\
\|w^{k+1} - w^*\|^2&\le \frac{1}{1+ \alpha\nu}\|w^k - w^*\|^2
\end{align*}
\end{proof}

\begin{lemma}
\label{lm:sharp1}
If Assumption~\ref{asp:sharp} holds, the iterates generated by the Algorithm~\ref{alg:pg_w} satisfy,
\[
\|P_A(w^k - w^{k+1})\| \le \|P_A(w^k - w^*)\| \quad \forall k \in \mathbb{N}_+.
\]
\end{lemma}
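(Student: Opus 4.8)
Under Assumption~\ref{asp:sharp} we have $g=0$, so the closed form $g_\nu(w)=\tfrac{1}{2\nu}\|(I-P_A)w\|^2$ gives $\nabla g_\nu(w)=\tfrac{1}{\nu}(I-P_A)w$, and the gradient step collapses: $w^k-\nu\nabla g_\nu(w^k)=P_A w^k$. Hence the iteration of Algorithm~\ref{alg:pg_w} reduces to $w^{k+1}=\prox_{\nu h}(P_A w^k)$. The first thing I would record is that the minimizer $w^*$ is a fixed point of this map: the stationarity condition $0\in\partial h(w^*)+\tfrac{1}{\nu}(I-P_A)w^*$ is exactly the optimality characterization of $w^*=\prox_{\nu h}(P_A w^*)$, since it states $\tfrac{1}{\nu}(P_A w^*-w^*)\in\partial h(w^*)$.

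The engine of the proof is that $\prox_{\nu h}$ is firmly nonexpansive (as $h$ is proper closed convex), so for all $a,b$,
\[
\ip{\prox_{\nu h}(a)-\prox_{\nu h}(b),\,a-b}\ \ge\ \|\prox_{\nu h}(a)-\prox_{\nu h}(b)\|^2 .
\]
I would instantiate this at $a=P_A w^k$ and $b=P_A w^*$, turning the left side into $\ip{w^{k+1}-w^*,\,P_A(w^k-w^*)}$ and the right side into $\|w^{k+1}-w^*\|^2$. Because $P_A$ is a self-adjoint projection, the identity $\ip{x,P_A y}=\ip{P_A x,P_A y}$ lets me replace $w^{k+1}-w^*$ by its projection inside the inner product, and the elementary bound $\|P_A z\|\le\|z\|$ lets me lower-bound the right side by $\|P_A(w^{k+1}-w^*)\|^2$. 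Writing $p:=P_A(w^k-w^*)$ and $r:=P_A(w^{k+1}-w^*)$, this collapses to the compact inequality $\ip{r,p}\ge\|r\|^2$.

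To finish I would substitute $r=p-q$ with $q:=P_A(w^k-w^{k+1})$. Expanding $\ip{p-q,p}\ge\|p-q\|^2$ and cancelling the common $\|p\|^2$ term gives $\ip{p,q}\ge\|q\|^2$, whence Cauchy--Schwarz forces $\|q\|\le\|p\|$, i.e.\ $\|P_A(w^k-w^{k+1})\|\le\|P_A(w^k-w^*)\|$, which is the claim. The one step that needs care---and which I view as the main obstacle---is the bookkeeping with $P_A$: one must route the firmly-nonexpansive inequality through the two projection facts ($\ip{x,P_A y}=\ip{P_A x,P_A y}$ and $\|P_A z\|\le\|z\|$) in exactly the right places, so that the full-space estimate degrades into a purely range-of-$A$ statement and the cross term closes up into the one-sided inequality $\ip{p,q}\ge\|q\|^2$ rather than something weaker.
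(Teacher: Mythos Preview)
Your proof is correct and takes a genuinely different route from the paper's. The paper argues purely variationally: from $w^{k+1}=\prox_{\nu h}(P_A w^k)$ it compares the \emph{value} of the prox objective at $w^{k+1}$ and at $w^*$,
\[
h(w^{k+1})+\tfrac{1}{2\nu}\|w^{k+1}-P_A w^k\|^2 \ \le\ h(w^*)+\tfrac{1}{2\nu}\|w^*-P_A w^k\|^2,
\]
then combines this with $p_\nu(w^*)\le p_\nu(w^{k+1})$ and the Pythagorean identity $\|w-P_A w^k\|^2=\|(I-P_A)w\|^2+\|P_A(w-w^k)\|^2$ to read off $\|P_A(w^k-w^{k+1})\|^2\le\|P_A(w^k-w^*)\|^2$ directly. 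You instead work at the operator level: firm nonexpansiveness of $\prox_{\nu h}$ applied at $P_A w^k$ and $P_A w^*$, together with the fixed-point characterization $w^*=\prox_{\nu h}(P_A w^*)$, yields $\ip{r,p}\ge\|r\|^2$ in the range of $A$, and a Cauchy--Schwarz step finishes. The paper's argument is marginally more elementary---it never invokes firm nonexpansiveness, only the two minimizer properties---while yours is more modular and fits the monotone-operator picture: once you recognize the iteration as $\prox_{\nu h}\circ P_A$ with $w^*$ a fixed point, the estimate is a standard consequence. Both proofs are of comparable length and reach the same conclusion.
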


\begin{proof}[Lemma~\ref{lm:sharp1}]\\
Since $w^{k+1} = \argmin_w h(w) + \frac{1}{2\nu}\|w - P_Aw^k\|^2$, we know,
\[
h(w^{k+1}) + \frac{1}{2\nu}\|w^{k+1} - P_Aw^k\|^2 \le h(w^*) + \frac{1}{2\nu}\|w^* - P_A w^k\|^2.
\]
By re-arranging terms, we get
\[
\|w^{k+1} - P_A w^k\|^2 - \|(I - P_A)w^{k+1}\|^2 - (\|w^* - P_A w^k\|^2 - \|(I - P_A)w^*\|^2) \le 2\nu (g_\nu(w^*) - g_\nu(w^{k+1})) \le 0.
\]
Since,
\[
\|(I - P_A) w\|^2 + \|P_A(w - w^k)\|^2 = \|w - P_A w^k\|^2
\]
we have,
\begin{align*}
\|w^{k+1} - P_A w^k\|^2 - \|(I - P_A) w^{k+1}\|^2 &= \|P_A(w^k - w^{k+1})\|^2\\
\|w^* - P_A w^k\|^2 - \|(I - P_A) w^*\|^2 &= \|P_A(w^k - w^*)\|^2
\end{align*}
Therefore,
\[
\|P_A(w^k - w^{k+1})\| \le \|P_A(w^k - w^*)\| \quad \forall k \in \mathbb{N}_+.
\]
\end{proof}

\begin{lemma}
\label{lm:sharp2}
Assume Assumption~\ref{asp:sharp} holds, the iterates generated by the Algorithm~\ref{alg:pg_w} satisfy,
\[
\|w^{k+1} - w^*\|^2 \le \|P_A(w^k - w^*)\|^2 - \|\nu P_Ad^k\|^2.
\]
Moreover,
\[
\|w^{k+1} - w^*\| \le \|P_A(w^k - w^*)\|.
\]
\end{lemma}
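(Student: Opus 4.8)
The plan is to prove the first inequality and then note that the second is immediate: since $\|\nu P_A d^k\|^2 \ge 0$, the first bound forces $\|w^{k+1} - w^*\|^2 \le \|P_A(w^k - w^*)\|^2$, and taking square roots gives the stated $\|w^{k+1} - w^*\| \le \|P_A(w^k - w^*)\|$. So all the work is in the first inequality.

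First I would exploit the closed form available when $g = 0$. Since $g_\nu(w) = \frac{1}{2\nu}\|(I - P_A)w\|^2$, we have $\nabla g_\nu(w) = \frac{1}{\nu}(I - P_A)w$, and hence $w^k - \nu\nabla g_\nu(w^k) = P_A w^k$. The iteration of Algorithm~\ref{alg:pg_w} therefore reduces to $w^{k+1} = \prox_{\nu h}(P_A w^k) = \argmin_w\{h(w) + \frac{1}{2\nu}\|w - P_A w^k\|^2\}$, which is the same reduced form already used in the proof of Lemma~\ref{lm:sharp1}. The prox objective $\phi(w) := h(w) + \frac{1}{2\nu}\|w - P_A w^k\|^2$ is $\frac{1}{\nu}$-strongly convex with minimizer $w^{k+1}$, so evaluating the strong-convexity lower bound at $w = w^*$ gives
\[
h(w^*) + \tfrac{1}{2\nu}\|w^* - P_A w^k\|^2 \ge h(w^{k+1}) + \tfrac{1}{2\nu}\|w^{k+1} - P_A w^k\|^2 + \tfrac{1}{2\nu}\|w^* - w^{k+1}\|^2.
\]
To control the difference $h(w^*) - h(w^{k+1})$ I would use that $w^*$ globally minimizes $p_\nu = h + g_\nu$, whence $h(w^*) - h(w^{k+1}) \le g_\nu(w^{k+1}) - g_\nu(w^*) = \frac{1}{2\nu}\big(\|(I-P_A)w^{k+1}\|^2 - \|(I-P_A)w^*\|^2\big)$.

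Finally, I would invoke the orthogonal (Pythagorean) decomposition $\|w - P_A w^k\|^2 = \|(I-P_A)w\|^2 + \|P_A(w - w^k)\|^2$, valid because $(I-P_A)w$ lies in the kernel of $P_A$ while $P_A(w - w^k)$ lies in its range, applied at both $w = w^*$ and $w = w^{k+1}$. Substituting these two identities together with the bound on $h(w^*) - h(w^{k+1})$ into the strong-convexity inequality, every $\|(I-P_A)\cdot\|^2$ term cancels, leaving $\|w^{k+1} - w^*\|^2 \le \|P_A(w^k - w^*)\|^2 - \|P_A(w^k - w^{k+1})\|^2$. Since $\nu d^k = w^k - w^{k+1}$, the subtracted term is exactly $\|\nu P_A d^k\|^2$, which is the claim.

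The computation is essentially mechanical; the main thing to get right is the bookkeeping of signs and, in particular, recognizing that introducing $g_\nu$ through the global optimality of $w^*$ is precisely what lets the $(I-P_A)$-components cancel, converting a statement about the abstract convex $h$ into the purely geometric $P_A$-projected inequality. As an alternative route, one could instead re-run the argument of Lemma~\ref{lm:linear} and Theorem~\ref{th:linear} with the strong-convexity constant set to $\alpha = 0$ (using only convexity, not strong convexity, of $h$); the same algebra, combined with the splittings $\|w^k - w^*\|^2 = \|P_A(w^k - w^*)\|^2 + \|(I-P_A)(w^k - w^*)\|^2$ and $\|\nu d^k\|^2 = \|\nu P_A d^k\|^2 + \|\nu(I-P_A)d^k\|^2$, collapses to the identical bound.
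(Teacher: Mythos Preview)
Your proof is correct. Your primary route---strong convexity of the prox objective together with global optimality of $w^*$ and the Pythagorean split---is a clean, self-contained argument that yields the bound directly. The paper, by contrast, simply says to repeat the proof of Theorem~\ref{th:linear} (which relies on Lemma~\ref{lm:linear}) with $\alpha = 0$; this is precisely the ``alternative route'' you sketch at the end. So your alternative \emph{is} the paper's proof, while your main argument is a mild reorganization that avoids passing through the inner-product inequality of Lemma~\ref{lm:linear} and instead extracts the $\tfrac{1}{2\nu}\|w^{k+1}-w^*\|^2$ term in one shot from the strong convexity of $w\mapsto h(w)+\tfrac{1}{2\nu}\|w-P_Aw^k\|^2$. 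Both routes use the same ingredients (convexity of $h$, optimality of $w^*$, orthogonal decomposition with respect to $P_A$) and neither needs the sharp-minimum part of Assumption~\ref{asp:sharp}; only convexity of $h$ and $g=0$ are used here.
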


\begin{proof}[Lemma~\ref{lm:sharp2}]
The proof uses the same technique as the proof of  Theorem~\ref{th:linear}.
\end{proof}

\begin{proof}[Theorem~\ref{th:sharp}]\\
Since $w^{k+1} = \argmin_w h(w) + \frac{1}{2\nu}\|w - P_Aw^k\|^2$, we know,
\begin{align*}
0 &\in \partial h(w^{k+1}) + \frac{1}{\nu}(w^{k+1} - P_A w^k)\\
\frac{1}{\nu}P_A(w^k - w^{k+1}) &\in \partial h(w^{k+1}) + \frac{1}{\nu}(w^{k+1} - P_A w^{k+1})\\
\frac{1}{\nu}P_A(w^k - w^{k+1}) &\in \partial p_\nu(w^{k+1})
\end{align*}
Because $p_\nu$ is convex and $w^*$ is a sharp minima,
\[
\alpha\|w^{k+1} - w^*\| \le p_\nu(w^{k+1}) - p_\nu(w^*) \le \frac{1}{\nu}\ip{P_A(w^k - w^{k+1}), w^{k+1} - w^*}
\]
Expanding the right inequality we obtain
\begin{align*}
p_\nu(w^{k+1}) - p_\nu(w^*) &\le \frac{1}{\nu}\ip{P_A(w^k - w^{k+1}), w^{k+1} - w^k + w^k - w^*}\\
&\le -\frac{1}{\nu}\|P_A(w^k - w^{k+1})\|^2 + \frac{1}{\nu}\ip{P_A(w^k - w^{k+1}), w^k - w^*}\\
&\le \frac{1}{\nu}\|P_A(w^k - w^{k+1})\| \|w^k - w^*\|\\
&\le \frac{1}{\nu}\|P_A(w^k - w^*)\|\|w^k - w^*\|\\
& \le \frac{1}{\nu} \|w^k - w^*\|^2
\end{align*}
Therefore,
\[
\|w^{k+1} - w^*\| \le \frac{1}{\alpha\nu}\|w^k - w^*\|^2.
\]
Combined with Lemma~\ref{lm:sharp2} we have, for all $k \ge K$,
\[
\|w^{k+1} - w^*\| \le \min\lt\{\|w^k - w^*\|, \frac{1}{\alpha\nu}\|w^k - w^*\|^2\rt\}
\]
which gives the locally quadratic convergence rate.
\end{proof}
%

\newpage

\begin{proof}[Theorem~\ref{th:trim}]\\
We introduce a sequence $\{x^k\}$ that statisfies,
\[
x^k = \argmin_x \frac{1}{2\nu}\|Ax - w^k\| + g(x), \quad
A^\top (w^k - A x^k) \in \nu \partial g(x^k),\quad \nu \nabla g_\nu(w^k) = w^k - Ax^k.
\]
Then we know the iterates of Algorithm~\ref{alg:vp-trim} satisfy,
\begin{align*}
\frac{1}{\nu}A(x^k - x^{k+1}) &\in \nabla g_\nu(w^{k+1}) + \sum_{i=1}^m v_i^k \partial h_i(w^{k+1}),\\
\frac{1}{\alpha}(v^k - v^{k+1}) &\in H(w^{k+1}) + \partial \delta(v^{k+1}|\triangle_\tau).
\end{align*}
By definition we know,
\begin{align*}
p_\nu^t(w^{k+1}, v^k)
=& \sum_{i=1}^m v_i^k h_i(w_i^{k+1}) + g_\nu(w^{k+1})\\
=& \sum_{i=1}^m v_i^k h_i(w_i^{k+1}) + \frac{1}{2\nu}\|A x^{k+1} - w^{k+1}\|^2 + g(x^{k+1})\\
=& \sum_{i=1}^m v_i^k h_i(w_i^{k+1}) + \frac{1}{2\nu}\|A x^{k} - w^{k+1} + A(x^{k+1} - x^k)\|^2 + g(x^{k+1})\\
=& \sum_{i=1}^m v_i^k h_i(w_i^{k+1}) + \frac{1}{2\nu}\|A x^{k} - w^{k+1}\|^2\\
&+ \frac{1}{\nu}\ip{A x^{k} - w^{k+1}, A(x^{k+1} - x^k)} + \frac{1}{2\nu}\|A(x^{k+1} - x^k)\|^2+ g(x^{k+1})\\
\le & \sum_{i=1}^m v_i^k h_i(w_i^{k}) + \frac{1}{2\nu}\|A x^{k} - w^{k}\|^2\\
&+ \frac{1}{\nu}\ip{A x^{k} - w^{k+1}, A(x^{k+1} - x^k)} + \frac{1}{2\nu}\|A(x^{k+1} - x^k)\|^2+ g(x^{k+1})\\
\end{align*}
Since $g$ is convex, we have,
\begin{align*}
g(x^k) &\ge g(x^{k+1}) + \frac{1}{\nu}\ip{A^\top(w^k - Ax^k), x^k - x^{k+1}}\\
&=g(x^{k+1}) + \frac{1}{\nu}\ip{w^k - Ax^k, A(x^k - x^{k+1})}.
\end{align*}
Plug this inequality into the result above, we get
\[\begin{aligned}
p_\nu^t(w^{k+1}, v^k) &\le \sum_{i=1}^m v_i^k h_i(w_i^{k}) + \frac{1}{2\nu}\|A x^{k} - w^{k}\|^2 + g(x^k) - \frac{1}{2\nu}\|A(x^k - x^{k+1})\|^2,\\
p_\nu^t(w^{k+1}, v^k) - p_\nu^t(w^k, v^k) &\le - \frac{1}{2\nu}\|A(x^k - x^{k+1})\|^2.
\end{aligned}\]
An analogous calculation for $v$ gives us 
\[\begin{aligned}
&p_\nu^t(w^{k+1}, v^{k+1}) - p_\nu^t(w^{k+1}, v^k)\\
=& \ip{H(w^{k+1}), v^{k+1} - v^{k}} + \delta(v^{k+1}|\triangle_\tau) - \delta(v^k|\triangle_\tau)\\
=& -\frac{1}{\alpha}\|v^{k+1} - v^k\|^2 - \lt[\delta(v^k|\triangle_\tau) -\lt(\delta(v^{k+1}|\triangle_\tau) + \ip{\partial \delta(v^{k+1}|\triangle_\tau), v^k - v^{k+1}} \rt)\rt]\\
\le& -\frac{1}{\alpha}\|v^{k+1} - v^k\|^2
\end{aligned}\]
Therefore we can conclude that,
\[\begin{aligned}
T_\nu^t(w^{k+1}, v^{k+1}) &\le \frac{1}{2\nu}\|A(x^k - x^{k+1})\|^2 + \frac{1}{\alpha}\|v^{k+1} - v^k\|^2\\
&\le p_\nu^t(w^k, v^k) - p_\nu^t(w^{k+1}, v^k) + p_\nu^t(w^{k+1}, v^k) - p_\nu^t(w^{k+1}, v^{k+1})\\
&=p_\nu^t(w^k, v^k) - p_\nu^t(w^{k+1}, v^{k+1})
\end{aligned}\]
Adding up the telescoping series, we get the final result:
\[
\frac{1}{k}\sum_{i=1}^{k} T_\nu^t(w^{i}, v^{i}) \le \frac{1}{k} [p_\nu^t(w^0, v^0) - p_\nu^t(w^k, v^k)].
\]
\end{proof}

\end{document}